\def\mylinespacing{1.12}
\newtheorem{theorem}{Theorem}{\bfseries}
\newtheorem{proposition}[theorem]{Proposition}{\bfseries}
\newtheorem{corollary}[theorem]{Corollary}{\bfseries}
\newtheorem{lemma}[theorem]{Lemma}{\bfseries}
\newtheorem{example}{Example}{\itshape}{\rmfamily}
\renewenvironment{proof}{\vspace*{\parsep}\par\pushQED{\qed}\begin{spacing}{1.12}
\par\nointerlineskip\noindent{\itshape Proof. }}{\popQED\end{spacing}\vspace*{\topsep}}
\newenvironment{proofof}[1]{\vspace*{\parsep}\begin{spacing}{1.12}
\par\noindent{\itshape Proof of #1\ }}{\qed\end{spacing}\vspace*{\topsep}}
\newcommand{\reals}{\mathbb{R}}
\newcommand{\nats}{\mathbb{N}}
\newcommand{\natz}{\mathbb{N}_{0}}
\newcommand{\indica}[1]{\mathbb{I}_{#1}}
\newcommand{\prev}[1]{\mathrm{E}_{#1}}
\newcommand{\upprev}[1][]{\overline{\mathrm{E}}_{#1}}
\newcommand{\eiupprev}[1]{
\makebox{$
\pbox[b][1em]{1.5em}{$\overline{\mathrm{E}}$}
_{\raisebox{1pt}[\height][0pt]{\scriptsize$#1$}}
^{\, \raisebox{-1pt}{\scriptsize\textnormal{ei}}}$}} 
\newcommand{\ciupprev}[1]{
\makebox{$
\pbox[b][1em]{1.5em}{$\overline{\mathrm{E}}$}
_{\raisebox{1pt}[\height][0pt]{\scriptsize$#1$}}
^{\, \raisebox{-1pt}{\scriptsize\textnormal{ci}}}$}} 
\newcommand{\riupprev}[1]{
\makebox{$
\pbox[b][1em]{1.5em}{$\overline{\mathrm{E}}$}
_{\raisebox{1pt}[\height][0pt]{\makebox[1.19\width][l]{\scriptsize$#1$}}}
^{\, \raisebox{-1pt}{\scriptsize\textnormal{ri}}}$} 
}
\newcommand{\avriupprev}[1]{
\makebox{$
\pbox[b][1em]{1.5em}{$\overline{\mathrm{E}}$}
_{\raisebox{1pt}[\height][0pt]{\makebox[0.9\width][l]{\scriptsize$\mathrm{av}$,$#1$}}}
^{\, \raisebox{-1pt}{\scriptsize\textnormal{ri}}}$} \,
}
\newcommand{\lowprev}[1]{\underline{\mathrm{E}}_{#1}}
\newcommand{\eilowprev}[1]{
\makebox{$
\pbox[b][1em]{1em}{$\underline{\mathrm{E}}$}
_{\, \raisebox{1pt}[\height][0pt]{\makebox[1.17\width][l]{\scriptsize$#1$}}}
^{\, \raisebox{-1pt}{\text{\scriptsize{\textnormal{ei}}}}}$} 
}
\newcommand{\cilowprev}[1]{
\makebox{$
\pbox[b][1em]{1em}{$\underline{\mathrm{E}}$}
_{\, \raisebox{1pt}[\height][0pt]{\makebox[1.17\width][l]{\scriptsize$#1$}}}
^{\, \raisebox{-1pt}{\text{\scriptsize{\textnormal{ci}}}}}$} 
}
\newcommand{\rilowprev}[1]{
\makebox{$
\pbox[b][1em]{1em}{$\underline{\mathrm{E}}$}
_{\, \raisebox{1pt}[\height][0pt]{\makebox[1.17\width][l]{\scriptsize$#1$}}}
^{\, \raisebox{-1pt}{\text{\scriptsize{\textnormal{ri}}}}}$}
}
\newcommand{\eiupprob}[1]{
\makebox{$
\pbox[b][1em]{1.5em}{$\overline{\mathrm{P}}$}
_{\raisebox{1pt}[\height][0pt]{\scriptsize$#1$}}
^{\, \raisebox{-1pt}{\scriptsize\textnormal{ei}}}$}}
\newcommand{\eilowprob}[1]{
\makebox{$
\pbox[b][1em]{1.5em}{$\underline{\mathrm{P}}$}
_{\raisebox{1pt}[\height][0pt]{\scriptsize$#1$}}
^{\, \raisebox{-1pt}{\scriptsize\textnormal{ei}}}$}}
\newcommand{\statespace}{\mathscr{X}}
\newcommand{\setofgambles}{\mathscr{L}}
\newcommand{\settrans}[1][\,\,]{\mathscr{T}_{#1}}
\newcommand{\uptrans}{\overline{T}}
\newcommand{\setofprocesses}[2][]{\mathscr{P}_{#2}^{\hspace*{-1pt}#1}}
\newcommand{\eimarkov}[1]{\smash{\setofprocesses[\, \mathrm{ei}]{#1}}}
\newcommand{\cimarkov}[1]{\smash{\setofprocesses[\, \mathrm{ci}]{#1}}}
\newcommand{\rimarkov}[1]{\smash{\setofprocesses[\, \mathrm{ri}]{#1}}}
\newcommand{\avuptrans}[2]{\smash{\overline{T}{}_{\hspace*{-2pt} #1}^{#2}}}
\DeclarePairedDelimiter{\hnorm}{\lVert}{\rVert_{\mathrm{H}}}
\DeclarePairedDelimiter{\supnorm}{\lVert}{\rVert_{\infty}}
\begin{document}

\title{Average Behaviour in Discrete-Time Imprecise Markov Chains: \\ A Study of Weak Ergodicity}
\author{Natan T'Joens}
\ead{natan.tjoens@ugent.be}
\author{Jasper De Bock}
\address{FLip, Ghent University, Belgium}

\begin{abstract}
We study the limit behaviour of upper and lower bounds on expected time averages in imprecise Markov chains; a generalised type of Markov chain where the local dynamics, traditionally characterised by transition probabilities, are now represented by sets of `plausible' transition probabilities.
Our first main result is a necessary and sufficient condition under which these upper and lower bounds, called upper and lower expected time averages, will converge as time progresses towards infinity to limit values that do not depend on the process' initial state.
Our condition is considerably weaker than that needed for ergodic behaviour;
a similar notion which demands that marginal upper and lower expectations of functions at a single time instant converge to so-called limit---or steady state---upper and lower expectations.
For this reason, we refer to our notion as `weak ergodicity'.
Our second main result shows that, as far as this weakly ergodic behaviour is concerned, one should not worry about which type of independence assumption to adopt---epistemic irrelevance, complete independence or repetition independence.
The characterisation of weak ergodicity as well as the limit values of upper and lower expected time averages do not depend on such a choice. 
Notably, this type of robustness is not exhibited by the notion of ergodicity and the related inferences of limit upper and lower expectations.
Finally, though limit upper and lower expectations are often used to provide approximate information about the limit behaviour of time averages, we show that such an approximation is sub-optimal and that it can be significantly improved by directly using upper and lower expected time averages.
\end{abstract}

\begin{keyword}
Imprecise Markov chain \sep Upper expectation \sep Upper transition operator \sep Expected time average \sep Weak Ergodicity \sep Epistemic irrelevance \sep Complete independence \sep Repetition independence
\end{keyword}

\maketitle

\section{Introduction}
Markov chains \cite{kemeny1960finite,shiryaev1995probability} are probabilistic models that are used to describe the uncertain dynamics of a large variety of stochastic processes.
One of the key results in the field is the point-wise ergodic theorem.
It establishes a relation between the long-term time average $f_{\mathrm{av}}(X_{1:k}) = \tfrac{1}{k}\sum_{i=1}^k f(X_i)$ of a real-valued function $f$ and its limit expectation $\mathrm{E}_\infty(f) = \lim_{k\to+\infty}\mathrm{E}(f(X_k))$, which is guaranteed to exist if the Markov chain is ergodic.\footnote{
The term ergodicity has various meanings; sometimes it refers to properties of an invariant measure, sometimes it refers to properties such as irreducibility (with or without aperiodicity), regularity, ... 
Our usage of the term follows conventions introduced in earlier work \cite{DECOOMAN201618,Hermans:2012ie} on imprecise Markov chains; see Sections~\ref{section: precise Markov chains} and~\ref{section: trans operators and ergodicity}.}\label{footnote} 
For this reason, limit expectations and limit distributions have become central objects of interest.
Of course, if one is interested in the long-term behaviour of time averages, one could also study the expected values $\mathrm{E}(f_{\mathrm{av}}(X_{1:k}))$ of these averages directly.
This is not often done though, because, if the Markov chain is ergodic, the limit of these expected time averages coincides with the aforementioned limit expectations, which can straightforwardly be obtained by solving a linear eigenproblem \cite{kemeny1960finite}. 
However, for a Markov chain that is not ergodic, 
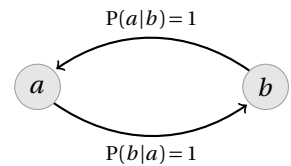
\begin{wrapfigure}{r}{8cm}
\begin{center}
\vspace*{-0.40cm}
\begin{tikzpicture}
\tikzset{pil/.style={->,thick,shorten >=1pt}}
\tikzstyle{place}=[circle,inner sep=2pt,draw=gray!75,fill=gray!20,minimum size=6mm]
\node [place] (a) at (0,0) {$a$};
\node [place] (b) at (3,0) {$b$};

\draw[pil, bend right = 35] (b.north west) to node[above]{\footnotesize $\mathrm{P}(a \vert b) = 1$} (a.north east);
\draw[pil, bend right = 35] (a.south east) to node[below]{\footnotesize $\mathrm{P}(b \vert a) = 1$} (b.south west);
\end{tikzpicture}
\caption{The graph above represents a cyclic Markov chain with two states $a$ and $b$.
Suppose that we want to have an idea about the average number of times that the process will be in state $b$.
So, we are interested in the limit behaviour of $f_{\mathrm{av}}(X_{1:k})$ where $f(a) \coloneqq 0$ and $f(b) \coloneqq 1$.
Though the expectation $\mathrm{E}_\infty(f)$ does not exist (if the initial distribution differs from the uniform distribution), the expectation $\mathrm{E}(f_{\mathrm{av}}(X_{1:k}))$ converges to $1/2$ irrespectively of the process' initial state or distribution.
This value is clearly representative for the long-term average of $f$.
}\label{Figure 1}
\end{center}
\vspace*{-0.50cm}
\end{wrapfigure}
the limit expectation $\mathrm{E}_\infty(f)$ does not necessarily exist and can therefore not be used to provide us with information about the average behaviour of $f$.
The expected time average $\mathrm{E}(f_{\mathrm{av}}(X_{1:k}))$---or its limit for $k\to+\infty$, if it exists---then serves as a seemingly suitable alternative; 
the figure on the right depicts a basic example where this is the case.
So we see that even in the context of traditional ``precise'' Markov chains, expected time averages have the potential to be more informative about the long-term average of $f$ compared to the limit expectation $\mathrm{E}_\infty(f)$.

In this work, we consider a generalisation of Markov chains, called imprecise Markov chains \cite{deCooman:2009jz,HermansITIP,DECOOMAN201618}, for which the study of long-term average behaviour becomes somewhat more complex.
Imprecise Markov chains are sets of traditional ``precise'' probabilistic models, where the Markov property (history independence) and the time-homogeneity property apply to this set of precise models as a whole, but not necessarily to the individual models themselves.
In fact, one distinguishes between three different types of imprecise Markov chains (IMC's):\,\footnote{A fourth type of imprecise Markov chain that is often encountered in the literature, especially in the more general context where imprecise Markov chains are simply regarded as special credal networks, are IMC's under strong independence \cite{HermansITIP,Cozman:2000ug,Cozman:2012fc,Antonucci:2014ty}; convex hulls of IMC's under complete independence.
However, as one of us argues in \cite[Section~3]{DEBOCK2017107} for the case of credal networks, we are of the opinion that such models lack a clear and sensible meaning.
Moreover, the resulting upper and lower expectations---the inferences that we will be interested in here---are identical to those for IMC's under complete independence.
We will therefore not consider them in our study.}
\begin{itemize}
\item \emph{IMC under epistemic irrelevance:} the individual models do not (necessarily) satisfy the Markov property, nor the time-homogeneity property. 
\item \emph{IMC under complete independence:} the individual models satisfy the Markov property, but not (necessarily) the time-homogeneity property. 
\item \emph{IMC under repetition independence:} the individual models satisfy both the Markov property and the time-homogeneity property.
\end{itemize}
So an imprecise Markov chain under repetition independence only allows one to incorporate model uncertainty about the numerical values of the transition probabilities that make up a Markov chain, while an imprecise Markov chain under epistemic irrelevance also allows one to take into account uncertainty about the structural assumptions of being time-homogeneous and satisfying the Markov property.
Regardless of the type of imprecise Markov chain that is used, one is typically interested in obtaining tight upper and lower bounds on inferences for the individual constituting models.
The operators that represent these upper and lower bounds are respectively called upper and lower expectations and we will, for the time being, denote them by $\upprev{}(\cdot)$ and~$\lowprev{}(\cdot)$ respectively.

Just like ergodicity in traditional Markov chains,
an imprecise Markov chain is said to be ergodic if the limit upper expectation $\upprev{}_{\infty}(f) = \lim_{k\to+\infty}\upprev{}(f(X_k))$ and the limit lower expectation $\lowprev{}{}_{\infty}(f)=\lim_{k\to+\infty}\lowprev{}(f(X_k))$ exist and do not depend on the process' initial state or distribution.
There are necessary and sufficient conditions for when this is the case \cite{Hermans:2012ie} as well as an imprecise variant of the point-wise ergodic theorem \cite{DECOOMAN201618}.\footnote{These results only hold for imprecise Markov chains under epistemic irrelevance and under complete independence.}
An important difference with traditional Markov chains, however, is that even if an imprecise Markov chain is ergodic and the limit upper expectation $\upprev{}_{\infty}(f)$ and the limit lower expectation $\lowprev{}{}_{\infty}(f)$ exist, the upper and lower expected time averages $\upprev{}(f_{\mathrm{av}}(X_{1:k}))$ and $\lowprev{}(f_{\mathrm{av}}(X_{1:k}))$ may not converge to them---that is, to $\upprev{}_{\infty}(f)$ and $\lowprev{}{}_{\infty}(f)$, respectively.
Nevertheless, because they (i) give conservative bounds \cite[Lemma~57]{8535240}, (ii) are fairly easy to compute \cite{deCooman:2009jz} and (iii) satisfy a point-wise ergodic theorem \cite{DECOOMAN201618}, the inferences $\upprev{}_{\infty}(f)$ and $\lowprev{}{}_{\infty}(f)$ are often used as descriptors of the long-term behaviour of imprecise Markov chains, even if one is actually interested in time averages.
This comes at a cost though: as we will show in Section~\ref{section: trans operators and ergodicity}, both types of inferences can differ greatly, with limit upper and lower expectations sometimes providing far too conservative bounds.

Unfortunately, apart from some experiments in \cite{8535240}, little is known about the long-term behaviour of the upper and lower expected time averages $\upprev{}(f_{\mathrm{av}}(X_{1:k}))$ and $\lowprev{}(f_{\mathrm{av}}(X_{1:k}))$.
The aim of this paper is to remedy this situation.
Our main result is an accessibility condition that is necessary and sufficient for these upper and lower expected time averages to (each) converge to a limit value that does not depend on the process' initial state (or distribution).
Remarkably, this condition is considerably weaker than the one required for ergodicity.
This explains why we call this type of behaviour `weak ergodicity' (or `weakly ergodic behaviour').
Moreover, we also show that this notion of weak ergodicity does not depend on the adopted type of imprecise Markov chain; whether one considers an imprecise Markov chain under epistemic irrelevance, complete independence or repetition independence is not relevant for the weakly ergodic behaviour of the Markov chain. 
More precisely, given sufficient model parameters, both the accessibility condition that characterises weak ergodicity, as well as---if this condition is satisfied---the limit values of the inferences $\upprev{}(f_{\mathrm{av}}(X_{1:k}))$ and $\lowprev{}(f_{\mathrm{av}}(X_{1:k}))$ are the same, no matter what kind of IMC we consider.
Conventional ergodicity does not exhibit this kind of robustness; we illustrate this in Example~\ref{example 2}.
This provides yet another argument for why (limits of) upper and lower expected time averages---$\upprev{}(f_{\mathrm{av}}(X_{1:k}))$ and $\lowprev{}(f_{\mathrm{av}}(X_{1:k}))$---candidate as the objects of interest when looking at the long-term average behaviour of imprecise Markov chains.

The outline of the paper is as follows. 
We start by introducing ``precise'' Markov chains in Section~\ref{section: precise Markov chains} and subsequently generalise towards the case of imprecise Markov chains in Section~\ref{Sect: imprecise  Markov chains}.
We then focus, as a first step, on average behaviour in imprecise Markov chains under epistemic irrelevance and complete independence, temporarily leaving imprecise Markov chains under repetition independence out of the picture.
As mentioned before, we will study two types of inferences: (limits of) upper and lower expectations of a function evaluated at a single time instant, and (limits of) upper and lower expected time averages of a function.
In Section~\ref{section: trans operators and ergodicity}, we give recursive expressions for how these inferences evolve through time, introduce the notions of ergodicity and weak ergodicity, and moreover illustrate, using two basic examples, that weak ergodicity has some considerable advantages over ``conventional'' ergodicity when it comes to characterising average behaviour.
Section~\ref{Sect: accessibility and topical maps} introduces essential mathematical machinery needed in order to arrive to our results in Sections~\ref{Sect: A Sufficient Condition for Weak Ergodicity}--\ref{sect: weak ergo in repetition independence}; we explain what it means for a map to be topical and introduce some graph-theoretic notions. 
In the subsequent section, Section~\ref{Sect: A Sufficient Condition for Weak Ergodicity}, we derive a sufficient condition for weak ergodicity by borrowing an eigenproblem result from the theory of topical maps.
Section~\ref{section: Result} then shows that this condition can be replaced by a weaker one that is not only sufficient, but also necessary.
Finally, we consider the case of imprecise Markov chains under repetition independence and relate their weak ergodicity to that of imprecise Markov chains under epistemic irrelevance or complete independence.
This will be the subject of Section~\ref{sect: weak ergo in repetition independence}.
As mentioned before, it will turn out that for all three types of imprecise Markov chains, weak ergodicity is characterised by the same condition and the limit upper (lower) expected time averages are all equal.

This paper extends upon an earlier conference paper \cite{TJoens_IPMU2020_weak_ergodicity}; 
we provide proofs for the results in \cite{TJoens_IPMU2020_weak_ergodicity}, and extend our study of weak ergodicity to also include imprecise Markov chains under repetition independence.
In order not to lose the reader's focus, we have chosen to relegate some of the more technical proofs to an appendix at the end of the paper. 
This is particularly true for the results in Sections~\ref{section: Result} and~\ref{sect: weak ergo in repetition independence}, where the main text provides, for the most part, an informal argument that aims to provide intuition. 

\section{Markov Chains}\label{section: precise Markov chains}
  
We consider an infinite sequence $X_{1} X_{2} X_{3} \cdots$ of uncertain states, where each state $X_k$ at time $k \in \nats$ takes values in some finite set $\statespace{}$, called the \emph{state space}.
Such a sequence $X_{1} X_{2} X_{3} \cdots$ will be called a \emph{(discrete-time) stochastic process}.
For any $k,\ell \in \nats{}$ such that $k \leq \ell$, we use $X_{k:\ell}$ to denote the finite subsequence $X_{k} \cdots X_{\ell}$ of states that takes values in $\statespace{}^{\ell-k+1}$.
Moreover, for any $k,\ell \in \nats{}$ such that $k \leq \ell$ and any $x_{k:\ell} \in \statespace{}^{\ell-k+1}$, we use $X_{k:\ell} = x_{k:\ell}$ to denote the event that $X_{k} = x_k \cdots X_{\ell} = x_\ell$.
The uncertain dynamics of a stochastic process are then typically described by probabilities of the form $\mathrm{P}(X_{k+1} = x_{k+1} \vert X_{1:k} = x_{1:k})$, for any $k \in \nats{}$ and any $x_{1:k+1} \in \statespace{}^{k+1}$.
They represent beliefs about which state the process will be in at time $k+1$ given that we know that it was in the states $x_{1} \cdots x_{k}$ at time instances $1$ through $k$.
Additionally, our beliefs about the value of the initial state $X_1$ can be represented by probabilities $\mathrm{P}(X_1=x_1)$ for all $x_{1} \in \statespace{}$.
The local probability assessments $\mathrm{P}(X_{k+1} = x_{k+1} \vert X_{1:k} = x_{1:k})$ and $\mathrm{P}(X_1=x_1)$ can now be combined to construct a global probability model $\mathrm{P}$ that describes the dynamics of the process on a more general level.
This can be done in various ways; one of the most common ones being a measure-theoretic approach where countable additivity plays a central role.
For our purposes however, we will only require finite additivity.
Regardless, once you have such a global probability model $\mathrm{P}$, it can then be used to define expectations and make inferences about the uncertain behaviour of the process.

For any set $A$, let us write $\setofgambles{}(A)$ to denote the set of all real-valued functions on $A$.
Throughout, for any $B \subseteq A$, we use $\indica{B}$ to denote the \emph{indicator} of $B$: the function in $\setofgambles{}(A)$ that takes the value $1$ in $B$ and $0$ otherwise.
We will only be concerned with (upper and lower) expectations of \emph{finitary functions}:
functions that depend on the state of the process at a finite number of time instances.
So if $f$ is finitary, we can write $f = g(X_{1:k})$ for some $k \in \nats{}$ and some $g \in \setofgambles{}(\statespace{}^{k})$.
Note that finitary functions are bounded; this follows from their real-valuedness and the fact that $\statespace{}$ is finite.
The expectation of a finitary function $f(X_{1:k})$ conditional on some event $X_{1:\ell} = x_{1:\ell}$, with $\ell<k$, simply reduces to a finite weighted sum:
\begin{align*}
\prev{\mathrm{P}}(f(X_{1:k}) \vert X_{1:\ell} = x_{1:\ell})
= 
\sum_{x_{\ell+1:k} \in \statespace{}^{k-\ell}} f(x_{1:k})\prod_{i=\ell}^{k-1} \mathrm{P}(X_{i+1} = x_{i+1} \vert X_{1:i} = x_{1:i}).
\end{align*} 
A particularly interesting case arises when studying stochastic processes that are described by a probability model $\mathrm{P}$ that satisfies
\begin{align*}
\mathrm{P}(X_{k+1} = y \, \vert \,  X_{1:k} = x_{1:k}) = \mathrm{P}(X_{k+1} = y \, \vert \,  X_{k} =  x_{k}),
\end{align*}
for all $k \in \nats{}$, all $y \in \statespace{}$ and all $x_{1:k} \in \statespace{}^{k}$.
This property, known as the \emph{Markov property}, states that given the present state of the process the future behaviour of the process does not depend on its  history. 
A process of this type is called a \emph{Markov chain}.
We moreover call it \emph{(time) homogeneous} if additionally
$\mathrm{P}(X_{k+1} = y \, \vert \, X_{k} = x) = \mathrm{P}(X_{2} = y \, \vert \, X_{1} = x)$,
for all $k \in \nats{}$ and all $x,y \in \statespace{}$.
Hence, together with the assessments $\mathrm{P}(X_1 = x_1)$, the dynamics of a homogeneous Markov chain are fully characterised by the probabilities $\mathrm{P}(X_{2} = y \, \vert \, X_{1} = x)$.
These probabilities are typically gathered in a \emph{transition matrix} $T$; a row-stochastic $\vert \statespace{} \vert  \times \vert \statespace{} \vert$ matrix $T$ that is defined by $T(x,y) \coloneqq \mathrm{P}(X_{2} = y \, \vert \, X_{1} = x)$ for all $x,y \in \statespace{}$.
This matrix representation~$T$ can be regarded as a linear operator from $\setofgambles{}(\statespace{})$ to $\setofgambles{}(\statespace{})$, defined for any $f \in \setofgambles{}(\statespace{})$ and any $x \in \statespace{}$ by 
\begin{align*}
Tf (x) \coloneqq 
\sum_{y \in \statespace{}} f(y) \mathrm{P}(X_{2} = y \, \vert X_{1} = x) 
= \prev{\mathrm{P}}(f(X_{2}) \, \vert \, X_{1} = x ).
\end{align*}
Conveniently, for any $k\in\nats$, we also have that 
\begin{align*}
\prev{\mathrm{P}}(f(X_{k+1}) \, \vert \, X_{k} = x )
= \sum_{y \in \statespace{}} f(y) \mathrm{P}(X_{k+1} = y \, \vert X_{k} = x)
= \sum_{y \in \statespace{}} f(y) \mathrm{P}(X_{2} = y \, \vert X_{1} = x) 
= Tf (x).
\end{align*}
More generally, it holds that $\prev{\mathrm{P}}(f(X_{k+\ell}) \, \vert \, X_{k} = x) = T^{\ell} f(x)$ for all $k \in \nats{}$, all $\ell \in \natz{}\coloneqq \nats \cup \{0\}$ and all $x \in \statespace{}$.
Then, under some well-known accessibility conditions \cite[Proposition~3]{Hermans:2012ie}, the expectation $T^{\ell} f(x)$ converges for increasing $\ell$ towards a constant $\prev{\infty}(f)$ independently of the initial state $x$.
If this is the case for all $f \in \setofgambles{}(\statespace{})$, the homogeneous Markov chain will have a steady-state distribution, represented by the limit expectation $\prev{\infty}$, and we call the Markov chain \emph{ergodic}. 
The expectation $\prev{\infty}$ is in particular also useful if we are interested in the limit behaviour of expected time averages.
Indeed, let $f_{\mathrm{av}}(X_{1:k}) \coloneqq \frac{1}{k} \sum_{i = 1}^{k} f(X_i)$ be the time average of some function $f \in \setofgambles{}(\statespace{})$ evaluated at the time instances $1$ through $k$.
Then, according to \cite[Theorem~38]{8535240}, the limit of the expected average $\lim_{k \to +\infty} \prev{\mathrm{P}}(f_{\mathrm{av}}(X_{1:k}))$ coincides with the limit expectation $\prev{\infty}(f)$.
One of the aims of this paper is to explore to which extent this remains true for imprecise Markov chains.

\section{Imprecise Markov Chains}\label{Sect: imprecise  Markov chains}

If the basic probabilities $\mathrm{P}(X_{k+1} \vert X_{1:k} = x_{1:k})$ that describe a stochastic process are imprecise, in the sense that we only have partial information about them, then we can still model the process' dynamics by considering a set $\settrans[x_{1:k}]$ of such probabilities, for all $k \in \nats{}$ and all $x_{1:k} \in \statespace{}^{k}$.
This set $\settrans[x_{1:k}]$ is then interpreted as the set of all probability mass functions $\mathrm{P}(X_{k+1} \vert X_{1:k} = x_{1:k})$ that we deem ``plausible''.
We here consider the special case where the sets $\settrans[x_{1:k}]$ satisfy a Markov property, meaning that $\settrans[x_{1:k}] = \settrans[x_{k}]$ for all $k \in \nats{}$ and all $x_{1:k} \in \statespace{}^{k}$.
Similarly to the precise case, the sets $\settrans[x]$, for all $x \in \statespace{}$, can be gathered into a single object: the set $\settrans{}$ of all row stochastic $\vert \statespace{} \vert \times \vert \statespace{} \vert$ matrices $T$ such that, for all $x \in \statespace{}$, the probability mass function $T(x, \cdot )$ is an element of $\settrans[x]$. 
A set $\settrans{}$ of transition matrices defined in this way is called \emph{separately specified} \cite[Definition~11.6]{HermansITIP}; this property asserts that, for any two transition matrices $T_1, T_2 \in\settrans$ and any subset $A\subseteq \statespace{}$, there is a third transition matrix $T_3\in\settrans$ such that $T_3(x,\cdot) = T_1(x,\cdot)$ for all $x\in A$ and $T_3(y,\cdot) = T_2(y,\cdot)$ for all $y\in \statespace{}\setminus A$.
For any such set $\settrans{}$, the corresponding \emph{imprecise Markov chain under epistemic irrelevance}\/ $\eimarkov{\settrans{}}$ \cite{deCooman:2010gd,deCooman:2009jz} is the set of all (precise) probability models $\mathrm{P}$ such that $\smash{\mathrm{P}(X_{k+1} \vert X_{1:k} = x_{1:k}) \in \settrans[x_k]}$ for all $k \in \nats{}$ and all $\smash{x_{1:k} \in \statespace{}^{k}}$.
The values of the probabilities $\mathrm{P}(X_1 = x_1)$ will be of no importance to us, because we will focus solely on (upper and lower) expectations conditional on the value of the initial state $X_1$.

Clearly, an imprecise Markov chain $\eimarkov{\settrans{}}$ also contains non-homogeneous, and even non-Markovian processes.
So the Markov property does in this case not apply to the individual probability assessments, but rather to the sets $\settrans[x_{1:k}]$.
The model $\eimarkov{\settrans{}}$ is therefore a generalisation of a traditional Markov chain where we allow for model uncertainty about, on the one hand, the mass functions $\mathrm{P}(X_{k+1} \vert X_{1:k} = x_{1:k})$ and, on the other hand, about structural assumptions such as the Markov and time-homogeneity property.
In order to make inferences that are robust with respect to this model uncertainty, we will use \emph{upper} and \emph{lower expectations} \cite{troffaes2014,Walley:1991vk,Augustin:2014di}.
These operators are respectively defined as the tightest upper and lower bound on the expectation $\mathrm{E}_\mathrm{P}$ associated with any probability model $\mathrm{P}$ in $\eimarkov{\settrans}$: 
\begin{align*}
\eiupprev{\settrans{}} (f \/ \vert A) \coloneqq \sup_{\mathrm{P} \in \eimarkov{\settrans{}}} \prev{\mathrm{P}}(f \/ \vert  A) \quad 
\text{ and } \quad \eilowprev{\settrans{}} (f  \vert  A) \coloneqq \inf_{\mathrm{P} \in \eimarkov{\settrans{}}} \prev{\mathrm{P}}(f  \vert  A),
\end{align*}
for any finitary function $f$ and any event $A$ of the form $X_{1:k} = x_{1:k}$. 
The operators $\smash{\eiupprev{\settrans{}}}$ and $\smash{\eilowprev{\settrans{}}}$ are related by conjugacy, meaning that $\smash{\eilowprev{\settrans{} }(\cdot \vert \cdot) = - \eiupprev{\settrans{}}(- \cdot \vert \cdot)}$, which allows us to focus on only one of them; upper expectations in our case.
The lower expectation $\smash{\eilowprev{\settrans{}}(f \/ \vert A)}$ of a finitary function $f$ can then simply be obtained by considering the upper expectation $\smash{- \eiupprev{\settrans{}}(- f \/ \vert A)}$. 
Moreover, note that \emph{upper} and \emph{lower probabilities} can simply be regarded as special cases of upper and lower expectations: for any event $B$ such that $\indica{B}$ is finitary and any event $A$ of the form $X_{1:k} = x_{1:k}$, we can define them as respectively $\eiupprob{\settrans}(B \vert A) \coloneqq \eiupprev{\settrans{}} (\indica{B} \/ \vert A)$ and $\eilowprob{\settrans}(B \vert A) \coloneqq \eilowprev{\settrans{}} (\indica{B} \/ \vert A)$.

Apart from epistemic irrelevance, there are also other types of independence assumptions for imprecise Markov chains that impose more stringent conditions on the individual composing probability models.
For a given set $\settrans{}$, the imprecise Markov chain under \emph{complete independence} $\smash{\cimarkov{\settrans{}}}$ is the subset of $\smash{\eimarkov{\settrans{}}}$ that contains all---possibly non-homogeneous---Markov chains in $\smash{\eimarkov{\settrans{}}}$~\cite{Lecture_Seidenfeld_2007,Cozman:2012fc,8535240}.
The models $\cimarkov{\settrans}$, also known as `Markov set-chains' \cite{Hartfiel1998}, were the first types of imprecise Markov chains to be thoroughly studied.
They can be motivated in a rather straightforward way, using a `sensitivity analysis interpretation'; the set $\settrans$ is then regarded as a result of our ignorance about some ``true'' transition matrix $T_k$ that may depend on the time $k$.
A third type of imprecise Markov chain that we will associate with $\settrans$ is the corresponding imprecise Markov chain under \emph{repetition independence} $\smash{\rimarkov{\settrans{}}}$, which is the subset of $\smash{\eimarkov{\settrans{}}}$ containing all homogeneous Markov chains \cite{couso_moral_walley_2000,8535240}.
Similarly as for $\cimarkov{\settrans}$, the model $\smash{\rimarkov{\settrans{}}}$ can be motivated using a sensitivity analysis interpretation, where the unknown matrix $T_k$ is now assumed to be fixed in time. 
Observe that the models $\cimarkov{\settrans}$ and $\smash{\rimarkov{\settrans{}}}$ do not allow us to incorporate uncertainty about the Markov assumption, a feature that only imprecise Markov chains under epistemic irrelevance have.
Moreover, though imprecise Markov chains under epistemic irrelevance can also be justified starting from a sensitivity analysis interpretation---the underlying `true' probability model is in that case not assumed to be Markov---they are especially suitable when we regard the sets $\settrans[x]$ as arising from the---subjective---beliefs of a subject that is uncertain about the process' next state value, like Walley does~\cite{Walley:1991vk}.
The fact that these sets $\settrans[x]$ satisfy a Markov property, then simply means that our subject's beliefs are solely based on the current state $x$ of the process; we refer to \cite{DEBOCK2017107,deCooman:2010gd} for further details.

Similarly to how we defined upper and lower expectations for imprecise Markov chains under epistemic irrelevance, we can define the upper expectations $\smash{\ciupprev{\settrans{}}}$ and $\smash{\riupprev{\settrans{}}}$ (and the lower expectations $\smash{\cilowprev{\settrans{}}}$ and $\smash{\rilowprev{\settrans{}}}$) as the tightest upper (and lower) bounds on the expectations corresponding to the models in $\cimarkov{\settrans{}}$ and $\rimarkov{\settrans{}}$, respectively.
Upper and lower probabilities can also be defined in the same way as before; as upper and lower expectations of indicators.
Furthermore, note that, since $\rimarkov{\settrans{}} \subseteq \cimarkov{\settrans{}} \subseteq \eimarkov{\settrans{}}$, we have that 
\begin{align*}
\smash{\riupprev{\settrans{}} (f \/ \vert A) 
\leq \ciupprev{\settrans{}} (f \/ \vert A)
\leq \eiupprev{\settrans{}} (f \/ \vert A)},
\end{align*}
for any finitary function~$f$ and any event~$A$ of the form $X_{1:k} = x_{1:k}$.
Henceforth, we let $\settrans{}$ be some generic set of transition matrices that is separately specified.

In this paper, we will be specifically concerned with two types of inferences: the conditional upper (and lower) expectation of a function $f\in\setofgambles{}(\statespace{})$ evaluated at a single time instant $k$, and the conditional upper (and lower) expectation of the time average $f_{\mathrm{av}}(X_{1:k})$ of a function $f\in\setofgambles{}(\statespace{})$, given that we start in some $x\in\statespace{}$.
For imprecise Markov chains under epistemic irrelevance and under complete independence, both of these inferences coincide \cite[Theorem~51 \& Theorem~52]{8535240}.
For any $f \in \setofgambles{}(\statespace{})$, any $x \in \statespace{}$ and any $k\in\nats$, we will denote them by
\vspace*{-4pt}
\begin{align*}
\upprev[k](f \vert x)
= \, &\eiupprev{\settrans{}}(f(X_{k}) \vert X_1 = x) 
= \ciupprev{\settrans{}}(f(X_{k}) \vert X_1 = x) \\
\text{ and } \  
\upprev[\mathrm{av},k](f \vert x)
= \, &\eiupprev{\settrans{}}(f_{\mathrm{av}}(X_{1:k}) \vert X_1 = x) 
= \ciupprev{\settrans{}}(f_{\mathrm{av}}(X_{1:k}) \vert X_1 = x),
\end{align*}
where the dependency on $\settrans{}$ is implicit.
The corresponding lower expectations can be obtained through conjugacy: 
$\lowprev{k}(f \vert x) = - \upprev[k]( -f \vert x)$ and $\lowprev{\mathrm{av},k}(f \vert x) = - \upprev[\mathrm{av},k]( -f \vert x)$ for all $f \in \setofgambles{}(\statespace{})$, all $x \in \statespace{}$ and all $k\in\nats$.
As we will discuss shortly, the behaviour (or evolution) of both of these inferences can be recursively expressed in terms of a single so-called upper transition operator $\uptrans{}$.
These relations will form the starting point for our further study of the limit behaviour of these inferences.
However, similar expressions seem not to exist for the upper expectations $\riupprev{\settrans}(f(X_k)\vert X_1 = x)$ and $\riupprev{\settrans}(f_{\mathrm{av}}(X_{1:k})\vert X_1 = x)$ corresponding to an imprecise Markov chain $\rimarkov{\settrans}$ under repetition independence.
As a consequence, such inferences demand a somewhat different approach.
For the moment, we therefore omit them from our discussion.
We will come back to them in Section~\ref{sect: weak ergo in repetition independence}.

\section{Transition Operators, Ergodicity and Weak Ergodicity}\label{section: trans operators and ergodicity}
 
Inferences of the form $\upprev[k](f \vert x)$---and, more specifically, upper (and lower) probabilities of events of the form $X_k \in A$, with $A \subseteq \statespace{}$---were among the first ones to be thoroughly studied in imprecise Markov chains \cite{Hartfiel1998,deCooman:2009jz,SKULJ20091314}.
Recent work on the topic \cite{deCooman:2009jz,Hermans:2012ie,DECOOMAN201618} is crucially based on the observation that $\upprev[k+1](f \vert x)$ can be elegantly rewritten as the $k$-th iteration of the map $\uptrans{} \colon \setofgambles{}(\statespace{}) \to \setofgambles{}(\statespace{})$ defined by
\begin{align*}
\uptrans{} h (x) 
\coloneqq \sup_{T \in \, \settrans{}} \, T h (x)
= \sup_{T(x , \cdot) \, \in \, \settrans[x]} \, \sum_{y \in \statespace{}} T(x,y) h(y),
\end{align*}
for all $x \in \statespace{}$ and all $h \in \setofgambles{}(\statespace{})$.
Concretely, $\upprev[k](f \vert x) = [\uptrans{}^{k-1} f](x)$ for all $x \in \statespace{}$ and all $k \in \nats{}$ \cite[Theorem~3.1]{deCooman:2009jz}.
The map $\uptrans{}$ therefore plays a similar role as the transition matrix $T$ in traditional Markov chains, which is why it is called the \emph{upper transition operator} corresponding to the set $\settrans{}$.
Moreover, observe that (the value of) the upper transition operator $\uptrans$, for any $h\in\setofgambles{}(\statespace{})$, can always be approximated arbitrarily closely by an element of $\settrans$:
\begin{enumerate}[leftmargin=*,ref={\upshape{}S\arabic*},label={\upshape{}S\arabic*}.,itemsep=3pt]
\item\label{sep specified} 
$(\forall\epsilon>0)\,(\forall h\in\setofgambles{}(\statespace{}))\,(\exists T\in\settrans)\ \uptrans{}h - \epsilon \leq Th \leq \uptrans h$.
\end{enumerate}
This property is a direct consequence of the fact that $\settrans$ is separately specified---see for example \cite[Lemma 1]{extended8627473} where we assume the set $\settrans$ to be closed---and will be used later on in Section~\ref{sect: weak ergo in repetition independence}.

In an analogous way, inferences of the form $\upprev[\mathrm{av},k](f \vert x)$ can be obtained as the $k$-th iteration of the map $\smash{\avuptrans{f}{} \colon \setofgambles{}(\statespace{}) \to \setofgambles{}(\statespace{})}$ defined by $\smash{\avuptrans{f}{} h \coloneqq f + \uptrans{} h}$ for all $h \in \setofgambles{}(\statespace{})$.
In particular, if we let $\tilde{m}_{f,1} \coloneqq f = \avuptrans{f}{} (0)$ and 
\begin{align}\label{Eq: recursive expression}
\tilde{m}_{f,k} \coloneqq f + \uptrans{} \tilde{m}_{f,k-1} = \avuptrans{f}{} \tilde{m}_{f,k-1} \text{ for all } k > 1,
\end{align} 
then it follows from \cite[Lemma 41]{8535240} that $\upprev[\mathrm{av},k](f \vert x) = \tfrac{1}{k} \tilde{m}_{f,k}(x)$ for all $x \in \statespace{}$ and all $k \in \nats{}$.
Applying Equation~\eqref{Eq: recursive expression} repeatedly, we find that for all $x \in \statespace{}$ and all $k\in\nats$:
\begin{align}\label{Eq: recursive expression 2}
\upprev[\mathrm{av},k](f \vert x) = 
\tfrac{1}{k}\tilde{m}_{f,k}(x)
=\tfrac{1}{k} [\avuptrans{f}{k-1} \tilde{m}_{f,1}](x) 
= \tfrac{1}{k} [\avuptrans{f}{k}(0)](x).
\end{align}
The same formula can also be obtained as a special case of the results in \cite{10.1007/978-3-030-29765-7_38}.

These expressions for $\upprev[k](f \vert x)$ and $\upprev[\mathrm{av},k](f \vert x)$ in terms of the respective operators $\uptrans{}$ and $\avuptrans{f}{}$ are particularly useful when we aim to characterise the limit behaviour of these inferences.
As will be elaborated on in the next section, there are conditions on $\uptrans{}$ that are necessary and sufficient for $\upprev[k](f \vert x)$ to converge to a limit value that does not depend on the process' initial state $x \in \statespace{}$.
If this is the case for all $f \in \setofgambles{}(\statespace{})$, the imprecise Markov chain (under epistemic irrelevance or complete independence) is called \emph{ergodic} \cite{Hermans:2012ie,DECOOMAN201618} and we then denote the constant limit value by $\upprev[\infty](f) \coloneqq \lim_{k \to +\infty} \upprev[k](f \vert x)$.
By analogy, we call an imprecise Markov chain (under epistemic irrelevance or complete independence) \emph{weakly ergodic}\/ if, for all $f \in \setofgambles{}(\statespace{})$, $\lim_{k \to +\infty} \upprev[\mathrm{av},k](f \vert x)$ exists and does not depend on the initial state $x$.
For a weakly ergodic imprecise Markov chain, we denote the common limit value by $\upprev[\mathrm{av},\infty](f) \coloneqq \lim_{k \to +\infty} \upprev[\mathrm{av},k](f \vert x)$.
In contrast with conventional ergodicity, weak ergodicity and other properties of the long-term behaviour of $\upprev[\mathrm{av},k](f \vert x)$ are almost entirely unexplored.
The main result of the first part of this paper, which focuses on imprecise Markov chains under epistemic irrelevance and complete independence, is a necessary and sufficient condition for weak ergodicity.
As we will see, this condition is weaker than that needed for conventional ergodicity, hence our choice of terminology.
The following example shows that this difference already becomes apparent in the precise case.

\begin{example}\label{example 1}
\begin{spacing}{\mylinespacing}
Recall the situation sketched in Figure~\ref{Figure 1}, where $\statespace{} = \{a,b\}$ and where $\settrans{}$ consists of a single matrix 
$T = 
\big[\begin{smallmatrix}
0 & 1 \\ 1 & 0
\end{smallmatrix}\big]$.
Fix any function $\smash{f = \big[\begin{smallmatrix}
f_a \\ f_b
\end{smallmatrix}\big] \in \setofgambles{}(\statespace{})}$.
Clearly, $\uptrans{}$ is not ergodic because $\smash{\uptrans{}^{(2\ell + 1)} f
= T^{(2\ell + 1)} f
= \big[\begin{smallmatrix}
0 & 1 \\ 1 & 0
\end{smallmatrix}\big] f
= \big[\begin{smallmatrix}
f_b \\ f_a
\end{smallmatrix}\big]}
$ and 
$\smash{\uptrans{}^{(2\ell)} f 
= \big[\begin{smallmatrix}
1 & 0 \\ 0 & 1
\end{smallmatrix}\big]f
= \big[\begin{smallmatrix}
f_a \\ f_b
\end{smallmatrix}\big]}$
for all $\ell \in \natz{}$.
$\uptrans{}$ is weakly ergodic though, because
\begin{align*}
\avuptrans{f}{(2\ell)}(0) = \ell \big[\begin{smallmatrix}
f_a + f_b \\ f_a + f_b
\end{smallmatrix}\big]
 \, \text{ and } \, \avuptrans{f}{(2\ell +1)}(0) = f + \uptrans{} \, \avuptrans{f}{(2\ell)}(0) = f + \ell \big[\begin{smallmatrix}
f_a + f_b \\ f_a + f_b
\end{smallmatrix}\big],
\end{align*} 
for all $\ell \in \natz{}$, which implies that $\upprev[\mathrm{av},\infty](f) \coloneqq \lim_{k \to +\infty} \avuptrans{f}{k}(0)/k = ( f_a + f_b )/2$ exists.
\hfill $\Diamond$
\end{spacing}
\vspace*{-0.06cm}
\end{example}

Notably, even if an imprecise Markov chain is ergodic (and hence also weakly ergodic) and therefore both $\upprev[\infty](f)$ and $\upprev[\mathrm{av},\infty](f)$ exist, these inferences will not necessarily coincide.
This was first observed in an experimental setting \cite[Section~7.6]{8535240}, but the differences that were observed there were marginal.
The following example shows that these differences can in fact be very substantial.

\begin{example}\label{example 2}
\begin{spacing}{\mylinespacing}
Let $\statespace{} = \{a,b\}$, let\/ $\settrans[a]$ be the set of all probability mass functions on\/ $\statespace{}$ and let\/ $\settrans[b]$ be the set that consists of the single probability mass function 
that puts all mass in $a$; see Figure~\ref{Figure 2}.
Then, for any $f = \big[\begin{smallmatrix}
f_a \\ f_b
\end{smallmatrix}\big] \in \setofgambles{}(\statespace{})$, we have that
\begin{align*}
\uptrans{}f (x) = 
\begin{aligned}
\begin{cases}
\max f &\text{ if } x=a; \\
f_a &\text{ if } x=b,
\end{cases}
\end{aligned}
\quad \text{ and } \quad
\uptrans{}^{ \, 2} f (x) = 
\begin{aligned}
\begin{cases}
\max \uptrans{} f = \max f &\text{ if } x=a; \\
\uptrans{}f (a) = \max f &\text{ if } x=b.
\end{cases}
\end{aligned}
\end{align*}
It follows that $\uptrans{}^k f = \max f$ for all $k \geq 2$, so the limit upper expectation $\upprev[\infty](f)$ exists and is equal to $\max f$ for all $f \in \setofgambles{}(\statespace{})$.
In particular, we have that $\upprev[\infty](\indica{b}) = 1$.  
On the other hand, we find that 
$\smash{\avuptrans{\indica{b}}{(2\ell)}(0) = \ell}$ and $\smash{\avuptrans{\indica{b}}{(2\ell + 1)}(0)} = \smash{\indica{b} + \uptrans{} \, \avuptrans{\indica{b}}{(2\ell)}(0)} = \smash{\big[\begin{smallmatrix}
\ell \\ \ell+1
\end{smallmatrix}\big]}$ for all $\ell \in \natz{}$.
This implies that the upper expectation $\smash{\upprev[\mathrm{av},\infty](\indica{b})} \coloneqq \smash{\lim_{k \to +\infty} \avuptrans{\indica{b}}{k}(0)/k}$ exists and is equal to $1/2$.
This value differs significantly from the limit upper expectation $\upprev[\infty](\indica{b}) = 1$.

In fact, this result could have been expected simply by taking a closer look at the dynamics that correspond to $\settrans{}$.
Indeed, it follows directly from $\settrans{}$  that, if the system is in state $b$ at some instant, then it will surely be in $a$ at the next time instant.
Hence, the system can only reside in state $b$ for maximally half of the time, resulting in an upper expected average that converges to $1/2$.
These underlying dynamics have little effect on the limit upper expectation $\upprev[\infty](\indica{b})$ though, because it is only concerned with the upper expectation of\/ $\indica{b}$ evaluated at a single time instant.

Finally, we want to draw attention to the fact that the upper expectation\/ $\smash{\upprev[\mathrm{av},\infty](\indica{b})} = 1/2$ is actually reached by a compatible homogeneous Markov chain.
Specifically, it is reached by the Markov chain from Example~\ref{example 1}, which is indeed compatible because its transition matrix
$\smash{T = 
\big[\begin{smallmatrix}
0 & 1 \\ 1 & 0
\end{smallmatrix}\big]}$ is in $\settrans$.
This already illustrates what will be established later on in Section~\ref{sect: weak ergo in repetition independence}:
when we are interested in the limit behaviour of the inferences $\upprev[\mathrm{av},k](f \vert x)$, we can simply treat them as upper envelopes of the expectations $\smash{\prev{\mathrm{P}}(f_{\mathrm{av}}(X_{1:k}) \vert X_1 = x)}$ that correspond to the compatible homogeneous Markov chains\/ $\mathrm{P}$.
This is not the case for the limit behaviour of the inferences\/ $\upprev[k](f \vert x)$ though; 
for instance, in the current example, where\/ $\upprev[\infty](\indica{b})=1$, the expectation $\prev{\mathrm{P}}(\indica{b}(X_k) \vert X_1 = b)$, for any $\mathrm{P}\in\rimarkov{\settrans}$, is lower or equal than $1/2$ for $k$ even.
This is left as an exercise for the reader---Hint: 
for any $\smash{T = 
\big[\begin{smallmatrix}
p & 1-p \\ 1 & 0
\end{smallmatrix}\big]}\in\settrans$, find $c_1,c_2 \in\reals$ such that $T^2\indica{a} = c_1 + c_2 \indica{a}$, and then use this observation to find an expression for $T^{2\ell}\indica{a}$ and $T^{2\ell+1}\indica{b} = (1-p) T^{2\ell}\indica{a}$ for all $\ell\in\nats$.
\hfill $\Diamond$
\end{spacing}
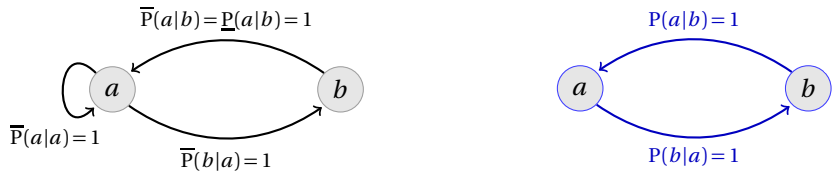
\begin{figure}[ht]
\begin{center}
\vspace*{-0.2cm}
\begin{tikzpicture}[scale=1]
\node[anchor = east] (firstpic) at (0,0) {
\begin{tikzpicture}
\tikzset{pil/.style={->,thick,shorten >=1pt}}
\tikzstyle{place}=[circle,inner sep=2pt,draw=gray!75,fill=gray!20,minimum size=6mm]
\node [place] (a) at (0,0) {$a$};
\node [place] (b) at (3,0) {$b$};
\draw[pil] (a) to [out=135,in=225,loop,looseness=4.8] node[left=1mm,below=3.5mm]{\footnotesize $\overline{\mathrm{P}}(a \vert a) = 1$} (a);
\draw[pil, bend right = 35] (b.north west) to node[above]{\footnotesize $\overline{\mathrm{P}}(a \vert b) = \underline{\mathrm{P}}(a \vert b) = 1$} (a.north east);
\draw[pil, bend right = 35] (a.south east) to node[below]{\footnotesize $\overline{\mathrm{P}}(b \vert a) = 1$} (b.south west);
\end{tikzpicture}
};




\node[anchor = west] (thirdpic) at (2.3,0) {
\begin{tikzpicture}
\tikzset{pil/.style={->,thick,shorten >=1pt,color=blue!75!black}}
\tikzstyle{place}=[circle,inner sep=2pt,draw=blue!75,fill=gray!20,minimum size=6mm]
\node [place] (a) at (0,0) {$a$};
\node [place] (b) at (3,0) {$b$};

\draw[->,thick,shorten >=1pt,color=white] (b) to [out=45,in=-45,loop,looseness=4.8,white] node[right=1mm,below=3.5mm]{\footnotesize $\overline{\mathrm{P}}(a \vert a) = 1$} (b);
\draw[pil, bend right = 35] (b.north west) to node[above]{\footnotesize $\mathrm{P}(a \vert b) = 1$} (a.north east);
\draw[pil, bend right = 35] (a.south east) to node[below]{\footnotesize $\mathrm{P}(b \vert a) = 1$} (b.south west);
\end{tikzpicture}
};
\end{tikzpicture}
\caption{The graph on the left hand side illustrates, in terms of upper and lower transition probabilities, how the state of the process can change from one time instant to the next.
The blue graph on the right hand side depicts the compatible precise Markov chain $\mathrm{P}$ for which the upper bound $\smash{\upprev[\mathrm{av},\infty](\indica{b})} = 1/2$ is reached.
}\label{Figure 2}
\end{center}
\end{figure}
\vspace*{-0.6cm}
\end{example}

Although we have used sets $\settrans{}$ of transition matrices to define imprecise Markov chains, it should at this point be clear that, if we are interested in the inferences $\upprev[k](f \vert x)$ and $\upprev[\mathrm{av},k](f \vert x)$ and their limit values, then it suffices to specify $\uptrans{}$.
In fact, we will temporarily forget about $\settrans{}$ and simply assume that $\uptrans{}$ is a general upper transition operator on $\setofgambles{}(\statespace{})$. 
That is, we assume $\uptrans{}$ to be any operator from $\setofgambles{}(\statespace{})$ to $\setofgambles{}(\statespace{})$ that satisfies
\begin{enumerate}[leftmargin=*,ref={\upshape{}U\arabic*},label={\upshape{}U\arabic*}.,itemsep=3pt, series=transcoherence]
\begin{samepage}
\item\label{transcoherence: upper bound} $\uptrans{} h \leq \max h$ \hfill [upper bounds];
\item\label{transcoherence: subadditivity} $\uptrans{} (h+g) \leq \uptrans{}h + \uptrans{}g$ \hfill [sub-additivity];
\item\label{transcoherence: homogeneity} $\uptrans{}(\lambda h) = \lambda \uptrans{}h$ \hfill [non-negative homogeneity],
\end{samepage}
\end{enumerate}
for all $h,g \in \setofgambles{}(\statespace)$ and all real $\lambda \geq 0$ \cite{DECOOMAN201618,Hermans:2012ie,Krak_ICTMC_2017}.
This can be done without loss of generality because it is well-established that any operator $\smash{\uptrans{}}$ that is defined as an upper envelope of a set $\settrans{}$ of transition matrices---as we did in Section~\ref{section: trans operators and ergodicity}---always satisfies \ref{transcoherence: upper bound}--\ref{transcoherence: homogeneity} \cite[Theorem 2.6.3]{Walley:1991vk}.
Then again, any upper transition operator can uniquely be represented by a closed, convex set of transition matrices that is seperately specified \cite[Theorem~3.3.3]{Walley:1991vk}, so there is no gain in generality either.
Apart from the axioms above, our results and proofs will also rely on the following three properties that are implied by \ref{transcoherence: upper bound}--\ref{transcoherence: homogeneity} \cite[Section~2.6.1]{Walley:1991vk}:
\begin{enumerate}[leftmargin=*,ref={\upshape{}U\arabic*},label={\upshape{}U\arabic*}.,itemsep=3pt, resume=transcoherence]
\item\label{transcoherence: bounds} $\min h  \leq \uptrans{} h \leq \max h$ \hfill [boundedness];
\item\label{transcoherence: constant addivity} $\uptrans{}(\mu + h) = \mu + \uptrans{}h$ \hfill [constant additivity];
\item\label{transcoherence: monotonicity} if $h \leq g$ then $\uptrans{} h \leq \uptrans{} g$ \hfill [monotonicity];
\item\label{transcoherence: mixed additivity} $\uptrans{}h - \uptrans{}g \leq \uptrans{} (h-g)$ \hfill [mixed sub-additivity],
\end{enumerate}
for all $h,g \in \setofgambles{}(\statespace)$ and all real $\mu$.
Henceforth, we will simply say that the upper transition operator $\uptrans{}$ is ergodic if $[\uptrans{}^k f]$ converges to a constant for all $f\in\setofgambles{}(\statespace{})$ and, analogously, we will say that it is weakly ergodic if $\tfrac{1}{k} [\avuptrans{f}{k}(0)]$ converges to a constant for all $f\in\setofgambles{}(\statespace{})$.
So ergodicity and weak ergodicity of $\uptrans{}$ is equivalent to the respective notions for an imprecise Markov chain under epistemic irrelevance or complete independence with upper transition operator $\uptrans{}$.

\section{Accessibility Relations and Topical Maps}\label{Sect: accessibility and topical maps}

To characterise ergodicity and weak ergodicity, we will make use of some well-known graph-theoretic concepts, suitably adapted to the imprecise Markov chain setting; we recall the following from \cite{deCooman:2009jz} and \cite{Hermans:2012ie}.
The \emph{upper accessibility graph} $\mathscr{G}(\uptrans{})$ corresponding to $\uptrans{}$ is defined as the directed graph with vertices $x_1 \cdots x_n \in \statespace{}$, where $n \coloneqq \vert \statespace{} \vert$, with an edge from $x_i$ to $x_j$ if $\uptrans{}\indica{x_j}(x_i) > 0$.
For any two vertices $x_i$ and $x_j$, we say that $x_j$ is \emph{accessible} from $x_i$, denoted by $x_i \to x_j$, if $x_i = x_j$ or if there is a directed path from $x_i$ to $x_j$, which means that there is a sequence $x_i = x'_0, x'_1 , \cdots, x'_k = x_j$ of vertices, with $k\in\nats{}$, such that there is an edge from $x'_{\ell-1}$ to $x'_{\ell}$ for all $\ell \in \{1,\cdots,k\}$.
The following result shows that such a directed path exists if and only if there is some $k\in\nats$ such that the $k$-step upper probability $\uptrans{}^k \indica{x_j}(x_i)$ to transition from $x_i$ to $x_j$ is positive. 
\begin{lemma}\label{lemma: directed path}
\emph{\cite[Proposition~4]{Hermans:2012ie}}
For any two vertices $x$ and $y$, there is a directed path of length $k \in \nats{}$\/ from $x$ to $y$ if and only if\/ $\uptrans{}^k \indica{y}(x) > 0$.
\end{lemma}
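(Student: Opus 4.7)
The plan is to proceed by induction on the path length $k\in\nats$, exploiting the basic operator properties \ref{transcoherence: subadditivity}, \ref{transcoherence: homogeneity} and \ref{transcoherence: monotonicity} to move between edges in $\mathscr{G}(\uptrans{})$ and positivity of iterated upper probabilities.

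For the base case $k=1$, the equivalence is immediate from the definition of $\mathscr{G}(\uptrans{})$: a directed path of length $1$ from $x$ to $y$ is precisely an edge, which by definition exists if and only if $\uptrans{}\indica{y}(x) > 0$. For the induction step, assume the claim for some $k\geq 1$ and consider the function $h \coloneqq \uptrans{}^k\indica{y}$, which is non-negative by \ref{transcoherence: monotonicity} and \ref{transcoherence: homogeneity} (since $\indica{y}\geq 0$ and $\uptrans{}0 = 0$). Writing $h = \sum_{z\in\statespace{}} h(z)\indica{\{z\}}$ with $h(z)\geq 0$, sub-additivity \ref{transcoherence: subadditivity} together with non-negative homogeneity \ref{transcoherence: homogeneity} yields
\begin{equation*}
\uptrans{}^{k+1}\indica{y}(x) \;=\; \uptrans{}h(x) \;\leq\; \sum_{z\in\statespace{}} h(z)\,\uptrans{}\indica{\{z\}}(x).
\end{equation*}
If the left-hand side is positive, some term on the right must be positive, i.e.\ there exists $z\in\statespace{}$ with $\uptrans{}\indica{\{z\}}(x) > 0$ (an edge from $x$ to $z$) and $h(z) = \uptrans{}^k\indica{y}(z) > 0$. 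By the induction hypothesis the latter gives a directed path of length $k$ from $z$ to $y$, and prepending the edge $(x,z)$ delivers a directed path of length $k+1$ from $x$ to $y$.

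For the converse direction of the induction step, suppose there is a directed path $x = x'_0, x'_1, \dots, x'_{k+1} = y$. The induction hypothesis, applied to the sub-path $x'_1, \dots, x'_{k+1}$, gives $c \coloneqq \uptrans{}^k\indica{y}(x'_1) > 0$, and since $\uptrans{}^k\indica{y}\geq 0$ pointwise, we have the pointwise bound $\uptrans{}^k\indica{y} \geq c\,\indica{\{x'_1\}}$. Applying monotonicity \ref{transcoherence: monotonicity} and non-negative homogeneity \ref{transcoherence: homogeneity} then yields
\begin{equation*}
\uptrans{}^{k+1}\indica{y}(x) \;=\; \uptrans{}(\uptrans{}^k\indica{y})(x) \;\geq\; c\,\uptrans{}\indica{\{x'_1\}}(x),
\end{equation*}
which is strictly positive because the edge from $x$ to $x'_1$ guarantees $\uptrans{}\indica{\{x'_1\}}(x) > 0$.

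I do not expect a genuine obstacle here: the entire argument is a bookkeeping exercise once one realises that non-negativity of $\uptrans{}^k\indica{y}$ lets one both dominate it from above by a non-negative combination of indicators (for the ``only if'' direction) and from below by a non-negative multiple of a single indicator (for the ``if'' direction). The only point requiring mild care is that $\uptrans{}$ is in general non-linear, so one must rely on the one-sided inequalities \ref{transcoherence: subadditivity} and \ref{transcoherence: monotonicity} rather than on equalities, which is exactly why the decomposition is split into the two directions shown above.
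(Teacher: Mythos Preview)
Your proof is correct. The paper does not actually prove this lemma; it is quoted verbatim from \cite[Proposition~4]{Hermans:2012ie}, so there is no in-paper argument to compare against. Your induction, using sub-additivity and non-negative homogeneity to bound $\uptrans h$ above by $\sum_z h(z)\uptrans\indica{\{z\}}$ and monotonicity plus non-negative homogeneity to bound it below by $c\,\uptrans\indica{\{x'_1\}}$, is exactly the natural argument and goes through without issue.

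One cosmetic point: in your final paragraph you have the labels swapped. Reading the biconditional as ``path exists iff $\uptrans^k\indica{y}(x)>0$'', the sub-additivity bound (dominate from above) establishes the \emph{if} direction ($\uptrans^{k+1}\indica{y}(x)>0\Rightarrow$ path), while the monotonicity bound (dominate from below) establishes the \emph{only if} direction (path $\Rightarrow\uptrans^{k+1}\indica{y}(x)>0$). The mathematics is unaffected.
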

We say that two vertices $x_i$ and $x_j$ \emph{communicate} and write $x_i \leftrightarrow x_j$ if both $x_i \to x_j$ and $x_j \to x_i$.
The relation $\rightarrow$ is a preorder (reflexive and transitive), and therefore, $\leftrightarrow$ is an equivalence relation (reflexive, symmetric and transitive) for which the equivalence classes are called \emph{communication classes}.
Moreover, it is well-known that these communication classes then form a partition $\mathscr{C}$ of $\statespace{}$.
Sometimes, we will allow ourselves a slight abuse of terminology, and call any non-empty set $\mathcal{S}\subseteq\statespace{}$ a \emph{class} in $\mathscr{G}(\uptrans)$.
We call the graph $\mathscr{G}(\uptrans{})$ \emph{strongly connected} if any two vertices $x_i$ and $x_j$ in $\mathscr{G}(\uptrans{})$ communicate, or equivalently, if $\statespace{}$ itself is a communication class. 

We also extend the domain of the relation $\to$ to include all communication classes by saying that $A \to B$, for any two $A,B\in\mathscr{C}$, if $x \to y$ for at least one (and hence---since $A$ and $B$ are communication classes---all) $x\in A$ and $y\in B$.
Then it can easily be seen that $\to$ induces a partial order (reflexive, antisymmetric and transitive) on the set $\mathscr{C}$ [Lemma~\ref{lemma: -> induces partial order}, \ref{Sect: app: accessibility relations}].
$\uptrans{}$ (or $\mathscr{G}(\uptrans{})$) is then said to have a \emph{top class} $\mathcal{R}$ if there is a communication class $\mathcal{R}$ that dominates every other communication class in this partial order, or equivalently [Lemma~\ref{lemma: if single communication class then top class}, \ref{Sect: app: accessibility relations}], since $\statespace{}$---and hence also $\mathscr{C}$---is finite, if $\mathcal{R}$ is the only maximal (undominated) communication class of this ordering.
One can also show [Lemma~\ref{lemma: formula top class}, \ref{Sect: app: accessibility relations}] that the top class $\mathcal{R}$ exists if and only if the set of all vertices that can be reached from anywhere in $\mathscr{G}(\uptrans{})$ is non-empty, in which case the top class is equal to this set:
\begin{align}\label{Eq: top class}
\mathscr{G}(\uptrans{})\text{ has a top class }\mathcal{R}\Leftrightarrow
\mathcal{R} = \{ x \in \statespace{} \colon y \to x \text{ for all } y \in \statespace{} \} \not= \emptyset.
\end{align}
Finally, we also say that a class $\mathcal{S}$ is \emph{closed} if $x \not\to y$ for all $x \in \mathcal{S}$ and all $y \in \mathcal{S}^c$.
Since the notions of closedness and maximality coincide for communication classes [Lemma~\ref{lemma: inescapable is maximal for communication classes}, \ref{Sect: app: accessibility relations}], it follows that the top class $\mathcal{R}$, if it exists, is the only closed communication class in $\mathscr{G}(\uptrans{})$.

Having a top class is necessary for $\uptrans{}$ to be ergodic, but it is not sufficient.
Sufficiency additionally requires that the top class $\mathcal{R}$ is regular and absorbing \cite[Proposition~3]{Hermans:2012ie}.
These properties are defined, for any closed class $\mathcal{S}$, as
\begin{enumerate}[leftmargin=*,ref={\upshape{}A\arabic*},label={\upshape{}A\arabic*}.,itemsep=3pt, series=accessibility]
\item\label{property: regularity} $(\forall x \in \mathcal{S})\,(\exists k^\ast \in \nats{})\,(\forall k \geq k^\ast) \ \min \uptrans{}^k \indica{x} > 0$ \hfill [Regularity];
\item\label{property: absorbing} $(\forall x \in \mathcal{S}^c)\,(\exists k \in \nats{}) \  \uptrans{}^k \indica{\mathcal{S}^c}(x) < 1$ \hfill [Absorbing].
\end{enumerate} 
We will say that $\uptrans{}$ is \emph{top class regular} (TCR) if it has a top class that is regular, and analogously for \emph{top class absorbing} (TCA).\footnote{
	The definitions for (TCR) and (TCA) in \cite{Hermans:2012ie} differ slightly from ours; they are equivalent though, as can be seen from Lemmas~\ref{lemma: formula top class} and~\ref{lemma: top class regularity formula}.
}
Top class regularity represents aperiodic behaviour: it demands that there is some time instant $k^\ast \in \nats{}$ such that all of the elements in the top class $\mathcal{R}$ are accessible from each other in $k$ steps, for any $k \geq k^\ast$.
In the case of traditional Markov chains, top class regularity suffices as a necessary and sufficient condition for ergodicity \cite{kemeny1960finite,deCooman:2009jz}.
However, in the imprecise case, we need the additional condition of being top class absorbing, which ensures that the top class will eventually be reached.  
It requires that, if the process starts from any state $x \in \mathcal{R}^c$, the lower probability that it will ever transition to $\mathcal{R}$ is strictly positive; see \cite{deCooman:2009jz} for a more detailed discussion. 
This notion of an absorbing (top) class, however, is not to be confused with what is called an absorbing state in standard literature on Markov chains.
The latter simply is a state such that, once entered, it can never be left anymore \cite[Section~3.1]{kemeny1960finite}; in our terminology, this is the same as a closed class that consists of a single state.

From a practical point of view, an important feature of both of these accessibility conditions is that they can be easily checked in practice, as is shown in~\cite[Section 5]{Hermans:2012ie}.
Strictly speaking, though, the method for checking \ref{property: absorbing} that is presented in \cite[Proposition~6]{Hermans:2012ie} only applies to regular top classes.
However, a closer look at the proof---of \cite[Proposition~6]{Hermans:2012ie}---shows that it does not rely on the regularity of the top class, and that the method can therefore be applied to any top class.
Hence, the condition of (TCA)---the central condition of this paper that will turn out to be necessary and sufficient for weak ergodicity---can be easily verified in practice by first checking the existence of a top class $\mathcal{R}$---for instance, using \eqref{Eq: top class}---and then checking \ref{property: absorbing} using the method described in \cite[Proposition~6]{Hermans:2012ie}.
A more explicit treatment of this subject, however, would lead us too far, and we therefore leave it to this informal argument.

The characterisation of ergodicity using (TCR) and (TCA) was strongly inspired by the observation that upper transition operators are part of a specific collection of order-preserving maps, called \emph{topical maps}.
These are maps $F \colon \reals{}^n \to \reals{}^n$ that satisfy, for all $h,g \in \reals{}^n$ and all $\mu \in \reals{}$,
\begin{enumerate}[leftmargin=*,ref={\upshape{}T\arabic*},label={\upshape{}T\arabic*}.,itemsep=3pt, series=topical]
\item\label{topical: constant addivity} $F(\mu + h) = \mu + Fh$ \hfill [constant additivity];
\item\label{topical: monotonicity} if $h \leq g$ then $F(h) \leq F(g)$ \hfill [monotonicity].
\end{enumerate} 
To show this, we identify $\setofgambles{}(\statespace{})$ with the finite-dimensional linear space $\reals{}^n$, with $n = \vert \statespace{} \vert$; this is clearly possible because both are isomorphic.
That every upper transition operator is topical now follows trivially from \ref{transcoherence: constant addivity} and \ref{transcoherence: monotonicity}.
What is perhaps less obvious, but can be derived in an equally trivial way, is that the operator $\avuptrans{f}{}$ is also topical.
This allows us to apply results for topical maps to $\smash{\avuptrans{f}{}}$ in order to find necessary and sufficient conditions for weak ergodicity.

\section{A Sufficient Condition for Weak Ergodicity}\label{Sect: A Sufficient Condition for Weak Ergodicity}

As a first step, we aim to find sufficient conditions for the existence of $\upprev[\mathrm{av},\infty](f)$.
To that end, recall from Section~\ref{section: trans operators and ergodicity} that $\upprev[\mathrm{av},\infty](f)$ exists if---and only if---the limit $\smash{\lim_{k \to +\infty} \avuptrans{f}{k} (0) / k}$ exists, and in that case $\upprev[\mathrm{av},\infty](f) = \smash{\lim_{k \to +\infty} \avuptrans{f}{k} (0) / k}$.
Then, since $\avuptrans{f}{}$ is topical, the following lemma implies that it is also equal to $\smash{\lim_{k \to +\infty} \avuptrans{f}{k} h / k}$ for any $h \in \setofgambles{}(\statespace{})$.
\begin{lemma}\label{lemma: cycle time exists independently of starting point}\emph{\cite[Lemma 3.1]{GUNAWARDENA2003141}}
Consider any topical map $F \colon \reals{}^n \to \reals{}^n$. 
If the limit\/ $\lim_{k \to +\infty} F^k h / k$ exists for some $h \in \reals{}^n$, then the limit exists for all $h \in \reals{}^n$ and they are all equal.
\end{lemma}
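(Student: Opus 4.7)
The plan is to derive the standard fact that every topical map is non-expansive with respect to the supremum norm, and then leverage this to transport the limit from one starting point to any other.

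First I would prove non-expansiveness. Fix any $h,g\in\reals^n$. Writing $\mathbf{1}$ for the all-ones vector, we have the pointwise inequality $h \leq g + \supnorm{h-g}\,\mathbf{1}$, so monotonicity (\ref{topical: monotonicity}) gives $F(h)\leq F\!\bigl(g + \supnorm{h-g}\,\mathbf{1}\bigr)$, and then constant additivity (\ref{topical: constant addivity}) yields
\begin{equation*}
F(h) \leq F(g) + \supnorm{h-g}\,\mathbf{1}.
\end{equation*}
Swapping the roles of $h$ and $g$ and combining the two inequalities produces $\supnorm{F(h)-F(g)} \leq \supnorm{h-g}$. A trivial induction on $k$ then gives $\supnorm{F^k(h)-F^k(g)} \leq \supnorm{h-g}$ for all $k\in\nats$.

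Next I would use this to identify the limits. Suppose $\lim_{k\to+\infty} F^k(h)/k$ exists for some particular $h\in\reals^n$. For an arbitrary $g\in\reals^n$ and any $k\in\nats$, write
\begin{equation*}
\frac{F^k(g)}{k} = \frac{F^k(h)}{k} + \frac{F^k(g)-F^k(h)}{k}.
\end{equation*}
The first term converges by assumption, while the non-expansiveness bound just derived shows that the second term has supremum norm at most $\supnorm{g-h}/k$, which tends to $0$. Hence $\lim_{k\to+\infty} F^k(g)/k$ exists and equals $\lim_{k\to+\infty} F^k(h)/k$, which completes the proof.

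The argument is essentially routine once non-expansiveness is in hand; the only step requiring any genuine thought is the derivation of that property from \ref{topical: constant addivity} and \ref{topical: monotonicity}, which is the standard ``sandwich'' trick using the constant vector $\supnorm{h-g}\,\mathbf{1}$. No deeper ingredient from the theory of topical maps is needed here—in particular, we do not require any eigenvalue result, since the conclusion only asserts equality of the limits rather than their existence.
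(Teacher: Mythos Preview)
Your argument is correct. The paper does not supply its own proof of this lemma; it simply cites \cite[Lemma~3.1]{GUNAWARDENA2003141}, so there is no in-paper proof to compare against. That said, your derivation is exactly the standard one from the cited reference: non-expansiveness of topical maps in the supremum norm (which the paper itself invokes elsewhere as \cite[Proposition~1.1]{GUNAWARDENA2003141}) followed by the decomposition $F^k(g)/k = F^k(h)/k + (F^k(g)-F^k(h))/k$. Nothing is missing.
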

Hence, if $\smash{\lim_{k \to +\infty} \avuptrans{f}{k} h / k}$ converges to a constant vector $\mu$ for some $h \in \setofgambles{}(\statespace{})$, then $\upprev[\mathrm{av},\infty](f)$ exists and is equal to $\mu$.
This condition is clearly satisfied if the map $\smash{\avuptrans{f}{}}$ has an (additive) eigenvector $h \in \setofgambles{}(\statespace{})$, meaning that $\smash{\avuptrans{f}{k} h = h + k \mu}$ for some $\mu \in \reals{}$ and all $k \in \natz{}$.
In that case, we have that $\upprev[\mathrm{av},\infty](f) = \mu$, where $\mu$ is called the eigenvalue corresponding to $h$.
Moreover, there can then only be one such eigenvalue $\mu$. 

\begin{corollary}\label{corollary:if eigenvalue then it is the only eigenvalue}
Consider any topical map $F \colon \reals{}^n \to \reals{}^n$.
If $F$ has an (additive) eigenvalue $\mu$, then it is the only eigenvalue of $F$. 
\end{corollary}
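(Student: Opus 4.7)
The plan is to derive the corollary directly from Lemma~\ref{lemma: cycle time exists independently of starting point}. First I would unpack the definition of an additive eigenvalue adopted just above the corollary: $\mu\in\reals{}$ is an eigenvalue of $F$ precisely when there exists some $h\in\reals{}^n$ with $F^k h = h + k\mu$ for all $k\in\natz{}$, where $k\mu$ is read as the constant vector all of whose entries equal $k\mu$. Dividing both sides by $k$, the term $h/k$ vanishes as $k\to+\infty$, so $F^k h/k$ converges to the constant vector $\mu$.

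Next I would suppose, for contradiction, that $F$ admits a second eigenvalue $\mu'\neq\mu$ with associated eigenvector $h'\in\reals{}^n$. The same observation then yields $F^k h'/k\to\mu'$ as $k\to+\infty$. Since both of the limits $\lim_{k\to+\infty}F^k h/k$ and $\lim_{k\to+\infty} F^k h'/k$ exist, Lemma~\ref{lemma: cycle time exists independently of starting point} forces them to coincide; hence the constant vectors $\mu$ and $\mu'$ are equal, i.e.\ $\mu=\mu'$, contradicting our assumption.

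There is essentially no obstacle here: the whole corollary reduces to reading off the limit $F^k h/k\to\mu$ from the eigenvector identity and then invoking the uniqueness of this limit across starting points guaranteed by Lemma~\ref{lemma: cycle time exists independently of starting point}. The only mild subtlety worth flagging in the write-up is that the adopted definition of an additive eigenvector requires $F^k h=h+k\mu$ for \emph{every} $k$, which is exactly what is needed to obtain a genuine limit (rather than merely a limit along a subsequence) of $F^k h/k$, so that the hypothesis of Lemma~\ref{lemma: cycle time exists independently of starting point} is met.
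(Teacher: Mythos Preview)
Your proposal is correct and follows essentially the same approach as the paper: both derive $\lim_{k\to+\infty}F^k h/k=\mu$ from the eigenvector identity and then invoke Lemma~\ref{lemma: cycle time exists independently of starting point} to force any two eigenvalues to agree. The only difference is cosmetic—you frame it as a contradiction while the paper argues directly that $\mu_1=\mu_2$.
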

\begin{proof}
Suppose that $F$ has two eigenvalues $\mu_1$ and $\mu_2$, and let $h_1$ and $h_2$ be the corresponding eigenvectors.
Then we have that $F^k h_1 = h_1 + k \mu_1$ for all $k \in \natz{}$, which immediately implies that $\lim_{k \to +\infty} F^k h_1 / k = \mu_1$.
In a similar way, we obtain that $\lim_{k \to +\infty} F^k h_2 / k = \mu_2$.
Then, due to Lemma~\ref{lemma: cycle time exists independently of starting point}, we have that $\mu_1=\mu_2$.
\end{proof}

To find conditions that guarantee the existence of an eigenvector of $\avuptrans{f}{}$, we will make use of results from \cite{10.2307/3845053} and \cite{GUNAWARDENA2003141}.
There, accessibility graphs are defined in a slightly different way:
for any topical map $F \colon \reals{}^n \to \reals{}^n$, they let $\mathscr{G}'(F)$ be the graph with vertices $v_{1} , \cdots , v_{n}$ and an edge from $v_i$ to $v_j$ if $\smash{\lim_{\alpha \to +\infty} [F(\alpha \indica{v_j})](v_i) = +\infty}$.
Subsequently, for such a graph $\mathscr{G}'(F)$, the accessibility relation $\cdot \to \cdot$ and corresponding notions (e.g. `strongly connected', `top class', \dots) are defined as in Section~\ref{Sect: accessibility and topical maps}.
If we identify the vertices $v_{1} , \cdots , v_{n}$ in $\mathscr{G}'(\uptrans{})$ and $\smash{\mathscr{G}'(\avuptrans{f}{})}$ with the different states $x_{1} , \cdots , x_{n}$ in $\statespace{}$, this can in particular be done for the topical maps $\uptrans{}$ and $\avuptrans{f}{}$.
The following results show that the resulting graphs coincide with the one defined in Section~\ref{Sect: accessibility and topical maps}.

\begin{lemma}\label{lemma: edge}
For any two vertices $x$ and $y$ in $\mathscr{G}'(\uptrans{})$, there is an edge from $x$ to $y$ in $\mathscr{G}'(\uptrans{})$ if and only if there is an edge from $x$ to $y$ in $\mathscr{G}(\uptrans{})$. 
\end{lemma}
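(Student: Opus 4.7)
The plan is to reduce the equivalence to a direct application of the axioms \ref{transcoherence: upper bound}--\ref{transcoherence: homogeneity} that characterise upper transition operators, exploiting the fact that $\indica{y}$ is a non-negative indicator. The key observation is that scaling the indicator $\indica{y}$ by $\alpha \geq 0$ commutes with $\uptrans{}$ thanks to non-negative homogeneity, so the expression $[\uptrans{}(\alpha\indica{y})](x)$ is genuinely linear in $\alpha$ for $\alpha \geq 0$.

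Concretely, I would first fix two vertices $x,y\in\statespace{}$ and argue, using \ref{transcoherence: homogeneity}, that for every $\alpha \geq 0$ one has $[\uptrans{}(\alpha\indica{y})](x) = \alpha\,\uptrans{}\indica{y}(x)$. Next I would note from \ref{transcoherence: bounds} that $0 = \min\indica{y} \leq \uptrans{}\indica{y}(x) \leq \max\indica{y} = 1$, so the value $\uptrans{}\indica{y}(x)$ is non-negative. These two facts jointly reduce the limit $\lim_{\alpha\to+\infty}[\uptrans{}(\alpha\indica{y})](x) = \lim_{\alpha\to+\infty}\alpha\,\uptrans{}\indica{y}(x)$ to the simple case analysis: if $\uptrans{}\indica{y}(x) > 0$, the limit is $+\infty$; if $\uptrans{}\indica{y}(x) = 0$, the limit is $0$.

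Hence the condition defining an edge from $x$ to $y$ in $\mathscr{G}'(\uptrans{})$, namely $\lim_{\alpha\to+\infty}[\uptrans{}(\alpha\indica{y})](x) = +\infty$, is equivalent to $\uptrans{}\indica{y}(x) > 0$, which is precisely the condition defining an edge from $x$ to $y$ in $\mathscr{G}(\uptrans{})$. There is essentially no obstacle: the lemma is a one-line consequence of \ref{transcoherence: homogeneity} and \ref{transcoherence: bounds}, and the only thing one must be careful about is to note that non-negative homogeneity suffices (we never need to scale by negative reals, since $\alpha$ ranges over the positive reals in the definition of $\mathscr{G}'$).
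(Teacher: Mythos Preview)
Your proposal is correct and follows essentially the same approach as the paper's own proof: both use non-negative homogeneity \ref{transcoherence: homogeneity} to pull $\alpha$ out of $\uptrans{}(\alpha\indica{y})$, then invoke the bounds \ref{transcoherence: bounds} to see that $\uptrans{}\indica{y}(x)\in[0,1]$, so the limit is $+\infty$ precisely when $\uptrans{}\indica{y}(x)>0$. Your exposition is slightly more explicit about the case analysis, but there is no substantive difference.
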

\begin{proof}
Consider any two vertices $x$ and $y$ in the graph $\mathscr{G}'(\uptrans{})$.
By definition, there is an edge from $x$ to $y$ if $\lim_{\alpha \to +\infty} [\uptrans{}(\alpha \indica{y})](x) = +\infty$. 
Due to \ref{transcoherence: homogeneity}, this is equivalent to the condition that $\lim_{\alpha \to +\infty} \alpha [\uptrans{}\indica{y}](x) = +\infty$.
Since moreover $0 \leq \uptrans{}\indica{y} \leq 1$ by \ref{transcoherence: bounds}, this condition reduces to $\uptrans{}\indica{y}(x) > 0$.
\end{proof}

\begin{corollary}\label{corollary: graphs are identical}
The graphs $\mathscr{G}'(\avuptrans{f}{})$, $\mathscr{G}'(\uptrans{})$ and $\mathscr{G}(\uptrans{})$ are identical.
\end{corollary}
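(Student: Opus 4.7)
The plan is to note that the vertex sets of all three graphs are identified with $\statespace{}$ (with $n = \vert\statespace{}\vert$), so it suffices to compare the three edge relations. Lemma~\ref{lemma: edge} already delivers the identity $\mathscr{G}'(\uptrans{}) = \mathscr{G}(\uptrans{})$, so the only remaining task is to show $\mathscr{G}'(\avuptrans{f}{}) = \mathscr{G}'(\uptrans{})$.

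For this, I would simply unfold the definition of $\avuptrans{f}{}$. By definition, $\avuptrans{f}{} h = f + \uptrans{} h$ for all $h \in \setofgambles{}(\statespace{})$, so for any two vertices $x,y \in \statespace{}$ and any $\alpha \geq 0$,
\begin{align*}
[\avuptrans{f}{}(\alpha \indica{y})](x) = f(x) + [\uptrans{}(\alpha \indica{y})](x).
\end{align*}
Since $f(x)$ is a fixed real number (because $f$ is real-valued and $\statespace{}$ is finite), taking $\alpha \to +\infty$ yields $[\avuptrans{f}{}(\alpha \indica{y})](x) \to +\infty$ if and only if $[\uptrans{}(\alpha \indica{y})](x) \to +\infty$. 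By the definition of $\mathscr{G}'(\cdot)$ recalled before Lemma~\ref{lemma: edge}, this is precisely the statement that the edge from $x$ to $y$ belongs to $\mathscr{G}'(\avuptrans{f}{})$ if and only if it belongs to $\mathscr{G}'(\uptrans{})$.

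Combining this with Lemma~\ref{lemma: edge} gives $\mathscr{G}'(\avuptrans{f}{}) = \mathscr{G}'(\uptrans{}) = \mathscr{G}(\uptrans{})$, as desired. There is no real obstacle here; the argument is essentially a one-line observation that adding the bounded perturbation $f$ cannot change which entries diverge to $+\infty$ under the $\alpha \to +\infty$ limit.
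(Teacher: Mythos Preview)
Your proof is correct and mirrors the paper's own argument exactly: the paper invokes Lemma~\ref{lemma: edge} for $\mathscr{G}'(\uptrans{}) = \mathscr{G}(\uptrans{})$ and then simply says that $\mathscr{G}'(\avuptrans{f}{}) = \mathscr{G}'(\uptrans{})$ ``follows straightforwardly from the definition of $\avuptrans{f}{}$,'' which is precisely the bounded-perturbation observation you spelled out.
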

\begin{proof}
Lemma~\ref{lemma: edge} implies that $\mathscr{G}'(\uptrans{})$ and $\mathscr{G}(\uptrans{})$ are identical.
Moreover, that $\mathscr{G}'(\avuptrans{f}{})$ is equal to $\mathscr{G}'(\uptrans{})$, follows straightforwardly from the definition of $\avuptrans{f}{}$.
\end{proof}
 
In principle, we could use this result to directly obtain the desired condition for the existence of an eigenvector from \cite[Theorem~2]{10.2307/3845053}.
However, \cite[Theorem~2]{10.2307/3845053} is given in a multiplicative framework and would need to be reformulated in an additive framework in order to be applicable to the map $\avuptrans{f}{}$; see \cite[Section~2.1]{10.2307/3845053}.
This can be achieved with a bijective transformation, but we prefer to not do so because it would require too much extra terminology and notation.
Instead, we will derive an additive variant of \cite[Theorem~2]{10.2307/3845053} directly from \cite[Theorem~9]{10.2307/3845053} and \cite[Theorem~10]{10.2307/3845053}.

The first result establishes that the existence of an eigenvector is equivalent to the fact that trajectories are bounded with respect to the Hilbert semi-norm $\hnorm{\cdot}$, defined by $\hnorm{h} \coloneqq \max h - \min h$ for all $h \in \reals{}^n$.

\begin{theorem}\label{theorem: eigenvector iff bounded}\emph{\cite[Theorem~9]{10.2307/3845053}}
Let $F \colon \reals{}^n \to \reals{}^n$ be a topical map.
Then $F$ has an eigenvector in $\reals{}^n$ if and only if $\left\{\hnorm{F^k h} \colon k \in \nats{} \right\}$ is bounded for some (and hence all) $h \in \reals{}^n$.
\end{theorem}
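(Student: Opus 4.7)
The plan is to handle the two implications separately, with the forward direction being essentially a direct verification and the backward direction being the technical heart of the result.

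For necessity, I would assume $F$ has an eigenvector $v\in\reals^n$, i.e.\ $Fv = v + \mu\mathbf{1}$ for some $\mu\in\reals$, where $\mathbf{1}$ denotes the all-ones vector. A simple induction using \ref{topical: constant addivity} then yields $F^kv = v + k\mu\mathbf{1}$, and since $\hnorm{\cdot}$ is invariant under translation by constant vectors, $\hnorm{F^kv} = \hnorm{v}$ for every $k\in\nats$. To propagate boundedness from $v$ to an arbitrary $h\in\reals^n$, I would first establish that every topical map is $\hnorm{\cdot}$-non-expansive: from $g + \min(h-g)\mathbf{1} \leq h \leq g + \max(h-g)\mathbf{1}$, applying \ref{topical: monotonicity} followed by \ref{topical: constant addivity} gives $Fg + \min(h-g) \leq Fh \leq Fg + \max(h-g)$, hence $\hnorm{Fh-Fg}\leq\hnorm{h-g}$. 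Iterating this with $g=v$ and using the triangle inequality for $\hnorm{\cdot}$ yields $\hnorm{F^kh}\leq \hnorm{F^kh-F^kv}+\hnorm{F^kv}\leq\hnorm{h-v}+\hnorm{v}$, uniformly in $k$.

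For sufficiency, the first move is to pass to the quotient space $Q=\reals^n/\reals\mathbf{1}$, on which $\hnorm{\cdot}$ descends to a genuine norm making $Q$ isomorphic as a normed space to $\reals^{n-1}$. Constant additivity of $F$ ensures that $F$ induces a well-defined, continuous, and (by the same derivation as above) non-expansive map $\bar F\colon Q\to Q$, and any fixed point $\bar v$ of $\bar F$ lifts to an eigenvector of $F$, with eigenvalue recovered from $Fv-v$. The boundedness hypothesis, together with the non-expansiveness, ensures that all $\bar F$-orbits in $Q$ are bounded, so the closure of any orbit is a non-empty compact forward-invariant subset of the finite-dimensional normed space $Q$.

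The hard part is then to extract a fixed point of $\bar F$ from this bounded-orbit data. A naive Brouwer argument on the closed convex hull of an orbit fails because $\bar F$ is not affine and need not map such a hull into itself. The route I would take, following the argument in the cited reference \cite{10.2307/3845053}, is to analyse the $\omega$-limit set $\Omega\subseteq Q$ of a bounded orbit: it is non-empty, compact, and $\bar F$-invariant, and one can use non-expansiveness together with minimality arguments to show that $\bar F$ restricts to a surjective isometry on $\Omega$. Existence of a fixed point for this restricted isometry is then obtained by a topological-degree or fixed-point-index argument applied to a carefully chosen compact convex set built around $\Omega$, exploiting the order structure that $Q$ inherits from $\reals^n$. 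This final extraction of a fixed point from a bounded non-expansive orbit is the main obstacle, and is the step I would rely on the cited literature to justify in detail.
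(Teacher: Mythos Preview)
The paper does not give its own proof of this theorem; it is quoted verbatim as \cite[Theorem~9]{10.2307/3845053} and used as a black box. So there is no in-paper argument to compare your proposal against.

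That said, your sketch is along the right lines and matches the structure of the proof in the cited source. The forward direction is complete and correct as you have it. For the backward direction, your passage to the quotient $Q=\reals^n/\reals\mathbf{1}$, the observation that $\bar F$ is non-expansive there, and the reduction of the problem to finding a fixed point of a non-expansive self-map with bounded orbits on a finite-dimensional normed space are all standard and correct. You are also right that the genuine difficulty sits in the final extraction step, and honest in flagging that you would defer to the literature for it; the argument in \cite{10.2307/3845053} indeed goes through $\omega$-limit sets and a non-trivial fixed-point-index computation rather than anything as soft as Brouwer on a convex hull. Since the paper itself simply imports the result, your proposal is already more than the paper provides.
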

That the boundedness of a single trajectory indeed implies the boundedness of all trajectories follows from the non-expansiveness of a topical map with respect to the Hilbert semi-norm \cite{10.2307/3845053}.
The second result that we need uses the notion of a \emph{super-eigenspace}, defined for any topical map $F$ and any $\mu \in \reals{}$ as the set $S^\mu(F) \coloneqq \{h \in \reals{}^n \colon F h \leq h + \mu\}$.

\begin{theorem}\label{theorem: strongly connected then supereigenspace bounded}\emph{\cite[Theorem~10]{10.2307/3845053}}
Let $F \colon \reals{}^n \to \reals{}^n$ be a topical map such that the associated graph $\mathscr{G}'(F)$ is strongly connected.
Then all of the super-eigenspaces are bounded in the Hilbert semi-norm.
\end{theorem}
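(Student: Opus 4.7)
The plan is to fix $\mu \in \reals{}$ and establish a uniform bound on $\hnorm{h}$ over $h \in S^\mu(F)$. Because $F$ is constant additive (\ref{topical: constant addivity}), both $S^\mu(F)$ and $\hnorm{\cdot}$ are invariant under the addition of a constant function, so I may normalise by $\min h = 0$; the task then reduces to bounding $M := \max h = \hnorm{h}$ uniformly.

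Iterating the defining inequality $Fh \leq h + \mu$ via \ref{topical: monotonicity} and \ref{topical: constant addivity} yields $F^k h \leq h + k \mu$ for all $k \in \nats{}$. Pick a minimiser $z$ and a maximiser $y$ of $h$. Then $[F^k h](z) \leq k \mu$ since $h(z) = 0$, and on the other side $h \geq M \indica{y}$ as functions (using $h \geq 0$ and $h(y) = M$), so monotonicity gives $[F^k h](z) \geq [F^k(M \indica{y})](z)$. The proof will be complete once I show that, for some $k$ chosen independently of $h$, this last quantity diverges as $M \to +\infty$: the upper bound $k\mu$ then forces $M$ to stay below a finite value.

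For the divergence step, strong connectedness of $\mathscr{G}'(F)$ furnishes a directed path $z = w_0 \to w_1 \to \cdots \to w_k = y$. I would prove by induction on $i$ that $[F^i(M \indica{y})](w_{k-i}) \to +\infty$ as $M \to +\infty$. The base case $i=0$ is $M\indica{y}(y) = M$. For the inductive step, monotonicity of $F$ applied iteratively to $M\indica{y} \geq 0$ gives a fixed lower bound $c_{i-1} := \min F^{i-1}(0)$ on $F^{i-1}(M\indica{y})$ that is independent of $M$; this lets me write $F^{i-1}(M\indica{y}) \geq c_{i-1} + \alpha_M \indica{w_{k-i+1}}$ with $\alpha_M := [F^{i-1}(M\indica{y})](w_{k-i+1}) - c_{i-1}$, which tends to $+\infty$ by the inductive hypothesis. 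Applying $F$ once more, extracting $c_{i-1}$ via \ref{topical: constant addivity}, and invoking the edge condition $\lim_{\alpha \to +\infty} [F(\alpha \indica{w_{k-i+1}})](w_{k-i}) = +\infty$ (from the definition of $\mathscr{G}'(F)$) closes the induction.

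The main obstacle is this bookkeeping of constant shifts: since $F$ need not fix $0$, one must work with the $c_i$ explicitly rather than assume each iterate is non-negative. Once the induction yields divergence of $[F^k(M\indica{y})](z)$, combining with $[F^k h](z) \leq k\mu$ bounds $M$ in terms of $(y, z, k, \mu)$. Because $\statespace{}$ is finite, there are finitely many pairs $(y,z)$ and I can fix a single path per pair; the maximum of the resulting bounds supplies a uniform constant $M^\ast$ with $\hnorm{h} \leq M^\ast$ for every $h \in S^\mu(F)$.
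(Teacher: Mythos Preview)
The paper does not provide its own proof of this theorem; it is imported verbatim as \cite[Theorem~10]{10.2307/3845053} and used as a black box. Your proposal therefore cannot be compared against a proof in the paper, but it is a correct self-contained argument and in fact follows the standard route for this type of result: normalise so that $\min h = 0$, iterate the super-eigenvector inequality to get $F^k h \leq h + k\mu$, and then squeeze $[F^k(M\indica{y})](z) \leq [F^k h](z) \leq k\mu$ against the fact that the left-hand side diverges as $M\to+\infty$ along any directed path from $z$ to $y$. The induction you describe---propagating divergence one edge at a time using the defining property of $\mathscr{G}'(F)$, with the constant shifts $c_{i-1}=\min F^{i-1}(0)$ absorbed via \ref{topical: constant addivity}---is exactly the right mechanism, and the uniformity over $h\in S^\mu(F)$ follows because the index set is finite, so there are only finitely many pairs $(y,z)$ and hence finitely many path lengths and divergence rates to consider. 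One cosmetic remark: in your final sentence you write ``$\statespace{}$ is finite'', but the theorem is stated for a general topical map on $\reals^n$, so strictly speaking the relevant finite set is $\{1,\dots,n\}$ rather than the Markov-chain state space $\statespace{}$; of course nothing in the argument changes.
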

Together, these theorems imply that any topical map $F \colon \reals{}^n \to \reals{}^n$ for which the graph $\mathscr{G}'(F)$ is strongly connected, has an eigenvector.
The connection between both is provided by the fact that trajectories cannot leave an eigenspace.
The following result formalises this.

\begin{theorem}\label{theorem: strongly connected then eigenvector}
Let $F \colon \reals{}^n \to \reals{}^n$ be a topical map such that the associated graph $\mathscr{G}'(F)$ is strongly connected.
Then $F$ has an eigenvector in $\reals{}^n$.
\end{theorem}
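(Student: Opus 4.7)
The plan is to exhibit a trajectory of $F$ that stays trapped inside a single super-eigenspace $S^\mu(F)$, then invoke the two cited theorems to deduce the existence of an eigenvector. The construction of the trajectory only uses \ref{topical: constant addivity} and \ref{topical: monotonicity}, while strong connectedness of $\mathscr{G}'(F)$ will be what guarantees that the trajectory is bounded in the Hilbert semi-norm.

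First I would pick any $h_0 \in \reals^n$ and set $\mu \coloneqq \max(Fh_0 - h_0)$, so that $Fh_0 \leq h_0 + \mu$, i.e.\ $h_0 \in S^\mu(F)$. I would then argue by induction that $F^k h_0 \in S^\mu(F)$ for every $k \in \natz{}$. Indeed, if $F^k h_0 \in S^\mu(F)$, meaning $F(F^k h_0) \leq F^k h_0 + \mu$, then applying $F$ to both sides and using monotonicity \ref{topical: monotonicity} together with constant additivity \ref{topical: constant addivity} yields $F(F^{k+1} h_0) \leq F(F^k h_0 + \mu) = F^{k+1} h_0 + \mu$, so $F^{k+1} h_0 \in S^\mu(F)$ as well.

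Next I would invoke Theorem~\ref{theorem: strongly connected then supereigenspace bounded}: since $\mathscr{G}'(F)$ is strongly connected, the super-eigenspace $S^\mu(F)$ is bounded with respect to $\hnorm{\cdot}$. Combining this with the inclusion $\{F^k h_0 : k \in \nats\} \subseteq S^\mu(F)$ established above, the set $\{\hnorm{F^k h_0} : k \in \nats\}$ is bounded. An application of Theorem~\ref{theorem: eigenvector iff bounded} then yields an eigenvector of $F$ in $\reals^n$.

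The argument is essentially a packaging of the two referenced theorems, so there is no serious obstacle; the only slightly delicate point is recognising that super-eigenspaces are forward-invariant under $F$, which is precisely what the constant-additivity-plus-monotonicity manipulation above delivers. Note too that this invariance argument makes no use of strong connectedness — the latter is only needed once, to ensure the super-eigenspace we have landed in is bounded in the Hilbert semi-norm.
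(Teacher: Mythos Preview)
Your proposal is correct and follows essentially the same approach as the paper: pick an arbitrary starting point, choose $\mu$ so that it lies in $S^\mu(F)$, use \ref{topical: constant addivity} and \ref{topical: monotonicity} to show the entire forward trajectory remains in $S^\mu(F)$, then apply Theorem~\ref{theorem: strongly connected then supereigenspace bounded} and Theorem~\ref{theorem: eigenvector iff bounded} in turn. The only cosmetic difference is that the paper allows any $\mu \geq \max(Fh-h)$ rather than taking equality, which makes no difference to the argument.
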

\begin{proof}
Consider any $h\in\reals{}^n$ and any $\mu\in\reals$ such that $\max(Fh - h) \leq \mu$. Then $Fh\leq h+\mu$, so $h\in S^\mu(F)$.
Now notice that $F(Fh) \leq F(h +\mu) = Fh +\mu$ because of \ref{topical: constant addivity} and \ref{topical: monotonicity}, which implies that also $Fh \in S^\mu(F)$.
In the same way, we can also deduce that $F^{2} h \in S^\mu(F)$ and, by repeating this argument, that the whole trajectory corresponding to $h$ remains in $S^\mu(F)$.
This trajectory is bounded because of Theorem~\ref{theorem: strongly connected then supereigenspace bounded}, which by Theorem~\ref{theorem: eigenvector iff bounded} guarantees the existence of an eigenvector. 
\end{proof}
In particular, if $\mathscr{G}'(\avuptrans{f}{})$ is strongly connected then $\avuptrans{f}{}$ has an eigenvector, which on its turn implies the existence of $\smash{\upprev[\mathrm{av},\infty](f)}$ as explained earlier.
If we combine this observation with Corollary~\ref{corollary: graphs are identical}, we obtain the following result.

\begin{proposition}\label{proposition: strongly connected then eigenvector}
$\uptrans{}$ is weakly ergodic if the associated graph $\mathscr{G}(\uptrans{})$ is strongly connected.
In that case, for any $f \in \setofgambles{}(\statespace{})$, the limit value $\upprev[\mathrm{av},\infty](f)$ is equal to the unique (additive) eigenvalue of $\avuptrans{f}{}$.
\end{proposition}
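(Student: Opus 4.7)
The plan is to assemble the proposition from the results already established in this section, treating it essentially as a corollary of Theorem~\ref{theorem: strongly connected then eigenvector} together with Lemma~\ref{lemma: cycle time exists independently of starting point}. The key observation is that weak ergodicity reduces to the convergence of $\avuptrans{f}{k}(0)/k$ to a constant vector for every $f \in \setofgambles{}(\statespace{})$, so it suffices to produce such a constant limit and identify it with the eigenvalue of $\avuptrans{f}{}$.

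First I would fix an arbitrary $f \in \setofgambles{}(\statespace{})$ and note that, by Corollary~\ref{corollary: graphs are identical}, the graph $\mathscr{G}'(\avuptrans{f}{})$ coincides with $\mathscr{G}(\uptrans{})$, which is strongly connected by hypothesis. Since $\avuptrans{f}{}$ is topical (as observed at the end of Section~\ref{Sect: accessibility and topical maps}), Theorem~\ref{theorem: strongly connected then eigenvector} then applies and furnishes an eigenvector $h \in \setofgambles{}(\statespace{})$ of $\avuptrans{f}{}$ with some eigenvalue $\mu \in \reals{}$, that is, $\avuptrans{f}{k} h = h + k\mu$ for all $k \in \natz{}$. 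Dividing by $k$ and taking the limit gives $\lim_{k\to+\infty} \avuptrans{f}{k} h / k = \mu$.

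Next, I would invoke Lemma~\ref{lemma: cycle time exists independently of starting point}: since the limit $\lim_{k\to+\infty} \avuptrans{f}{k} g / k$ exists for $g = h$, it exists for every $g \in \setofgambles{}(\statespace{})$ and has the same value $\mu$. In particular, taking $g = 0$ and recalling from Equation~\eqref{Eq: recursive expression 2} that $\upprev[\mathrm{av},k](f \vert x) = \tfrac{1}{k}[\avuptrans{f}{k}(0)](x)$ for all $x \in \statespace{}$, we conclude that $\upprev[\mathrm{av},k](f \vert x) \to \mu$ as $k \to +\infty$, independently of $x$. Since $f$ was arbitrary, this gives weak ergodicity of $\uptrans{}$ and identifies $\upprev[\mathrm{av},\infty](f)$ with an eigenvalue of $\avuptrans{f}{}$.

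Finally, the uniqueness claim is immediate from Corollary~\ref{corollary:if eigenvalue then it is the only eigenvalue} applied to the topical map $\avuptrans{f}{}$: once one eigenvalue exists, it is the only one. There is no real obstacle here; the delicate work has already been done in Theorems~\ref{theorem: eigenvector iff bounded} and~\ref{theorem: strongly connected then eigenvector}. The only point I would be slightly careful about is to explicitly invoke Corollary~\ref{corollary: graphs are identical}, so that the strong connectedness hypothesis on $\mathscr{G}(\uptrans{})$---which is a statement about the upper transition operator---can be transferred to the graph $\mathscr{G}'(\avuptrans{f}{})$ that actually appears in Theorem~\ref{theorem: strongly connected then eigenvector}.
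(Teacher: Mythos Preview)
Your proposal is correct and follows essentially the same approach as the paper's proof: both use Corollary~\ref{corollary: graphs are identical} to transfer strong connectedness to $\mathscr{G}'(\avuptrans{f}{})$, apply Theorem~\ref{theorem: strongly connected then eigenvector} to obtain an eigenvector, invoke Lemma~\ref{lemma: cycle time exists independently of starting point} to conclude that $\upprev[\mathrm{av},\infty](f)$ exists and equals the eigenvalue, and cite Corollary~\ref{corollary:if eigenvalue then it is the only eigenvalue} for uniqueness. Your version is slightly more explicit in unpacking the limit step via Equation~\eqref{Eq: recursive expression 2}, but the structure is identical.
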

\begin{proof}
Suppose that $\mathscr{G}(\uptrans{})$ is strongly connected.
Then, by Corollary~\ref{corollary: graphs are identical}, $\mathscr{G}'(\avuptrans{f}{})$ is also strongly connected.
Hence, for any $f \in \setofgambles{}(\statespace{})$, since $\smash{\avuptrans{f}{}}$ is a topical map, Theorem~\ref{theorem: strongly connected then eigenvector} guarantees the existence of an eigenvector of $\smash{\avuptrans{f}{}}$.
Let $\mu$ be the corresponding eigenvalue.
By Corollary~\ref{corollary:if eigenvalue then it is the only eigenvalue}, this eigenvalue $\mu$ is the only, and therefore unique, eigenvalue corresponding to $\smash{\avuptrans{f}{}}$.
As explained in the beginning of this section, it now follows from Lemma~\ref{lemma: cycle time exists independently of starting point} that $\smash{\upprev[\mathrm{av},\infty](f)}$ exists and is equal to $\mu$, so we indeed find that $\uptrans{}$ is weakly ergodic.
\end{proof}
In the remainder of this paper, we will use the fact that $\uptrans{}$ is an upper transition operator---so not just any topical map---to strengthen this result.
In particular, we will show that the condition of being strongly connected can be replaced by a weaker one: being top class absorbing.
Nonetheless, the result above can already be useful in practice because checking whether a graph is strongly connected can be done rather efficiently; in any case more efficiently than checking for (TCA) since that requires us to first check the existence of a top class anyway. 
Hence, when interested in weakly ergodic behaviour, and when the dimensions of the considered model are large, one may prefer to verify this before checking the weaker condition of being top class absorbing.\footnote{Note that one might want to do so even in case the considered imprecise Markov chain fails to satisfy (TCR)---and therefore fails to be ergodic.
This is illustrated by Example~\ref{example 1} which depicts a situation where it is immediately clear that (TCR) is not satisfied---since $\mathscr{G}(\uptrans{})$ is cyclic---but where the graph $\mathscr{G}(\uptrans{})$ is nevertheless strongly connected.}

\section{A Necessary and Sufficient Condition for Weak Ergodicity}\label{section: Result}

In order to gain some intuition about how to obtain a more general sufficient condition for weak ergodicity, consider the case where $\uptrans{}$ has a closed (or, equivalently, maximal) communication class $\mathcal{S}$ and the process' initial state $x$ is in $\mathcal{S}$.
Since $\mathcal{S}$ is closed, the process surely remains in $\mathcal{S}$ and hence, it is to be expected that the time average of $f$ will not be affected by the dynamics of the process outside $\mathcal{S}$.
Moreover, the communication class $\mathcal{S}$ is a strongly connected component, so one would expect that, due to Proposition~\ref{proposition: strongly connected then eigenvector}, the upper expected time average $\upprev[\mathrm{av},k](f \vert x)$ converges to a constant that does not depend on the state $x \in \mathcal{S}$.
Our intuition is formalised by the following proposition.
Its proof, as well as those of the other statements in this section (apart from Theorem~\ref{theorem: weakly ergodic iff top class absorbing}), can be found in the Appendix.

\begin{proposition}\label{proposition: if top class then eigenvector in top class}
For any closed communication class $\mathcal{S}$ of \/ $\uptrans{}$, any $f\in\setofgambles{}(\statespace{})$ and any $x\in\mathcal{S}$, the inference $\upprev[\mathrm{av},k](f \vert x)$ is equal to $\upprev[\mathrm{av},k](f \indica{\mathcal{S}} \vert x)$ and converges to a limit value as $k$ recedes to infinity. This limit value is furthermore the same for all $x \in \mathcal{S}$.
\end{proposition}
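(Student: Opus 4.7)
The plan is to exploit the fact that, since $\mathcal{S}$ is closed, a process started in $\mathcal{S}$ can never leave, and so the whole trajectory is determined by the dynamics of $\uptrans{}$ restricted to $\mathcal{S}$. This restricted dynamics should then inherit strong connectedness from the fact that $\mathcal{S}$ is a single communication class, so that Proposition~\ref{proposition: strongly connected then eigenvector} applies to it.

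The concrete first step is the key observation that, for any $h\in\setofgambles{}(\statespace{})$ and any $x\in\mathcal{S}$, one has $\uptrans{}h(x) = \uptrans{}(h\indica{\mathcal{S}})(x)$. Indeed, since $\mathcal{S}$ is closed, $x\not\to y$ and hence $\uptrans{}\indica{y}(x)=0$ for every $y\in\mathcal{S}^c$; by the definition of $\uptrans{}$ as an upper envelope over $\settrans$, this forces $T(x,y)=0$ for every $T$ in the relevant envelope, and the claimed identity follows by splitting the sum $\sum_y T(x,y)h(y)$ over $\mathcal{S}$ and $\mathcal{S}^c$. A straightforward induction on $k$, using \eqref{Eq: recursive expression}, then shows that $\tilde{m}_{f,k}(x) = \tilde{m}_{f\indica{\mathcal{S}},k}(x)$ for every $x\in\mathcal{S}$, which by \eqref{Eq: recursive expression 2} yields the first half of the statement, $\upprev[\mathrm{av},k](f\vert x) = \upprev[\mathrm{av},k](f\indica{\mathcal{S}}\vert x)$.

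For the convergence claim, I would next introduce the restricted map $\uptrans{}_\mathcal{S}\colon\setofgambles{}(\mathcal{S})\to\setofgambles{}(\mathcal{S})$ defined by $\uptrans{}_\mathcal{S} h (x) \coloneqq \uptrans{}(h^\ast)(x)$ for all $x\in\mathcal{S}$, where $h^\ast$ is any extension of $h$ to $\statespace{}$; this is well-defined by the observation above (the value does not depend on the extension). Properties \ref{transcoherence: upper bound}--\ref{transcoherence: homogeneity} for $\uptrans{}_\mathcal{S}$ follow directly from the corresponding properties for $\uptrans{}$, so $\uptrans{}_\mathcal{S}$ is an upper transition operator on $\setofgambles{}(\mathcal{S})$. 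Moreover, by Lemma~\ref{lemma: edge} (or directly from the definition of edges), the graph $\mathscr{G}(\uptrans{}_\mathcal{S})$ coincides with the subgraph of $\mathscr{G}(\uptrans{})$ induced by $\mathcal{S}$; since $\mathcal{S}$ is a single communication class, this subgraph is strongly connected.

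Combining the two steps, for any $x\in\mathcal{S}$, the induction also shows that $\tilde{m}_{f\indica{\mathcal{S}},k}(x)$ agrees with the $k$-th iterate $\avuptrans{f|_\mathcal{S}\,\,}{k}(0)(x)$ computed inside $\uptrans{}_\mathcal{S}$; hence $\upprev[\mathrm{av},k](f\vert x) = \tfrac{1}{k}\avuptrans{f|_\mathcal{S}\,\,}{k}(0)(x)$. Applying Proposition~\ref{proposition: strongly connected then eigenvector} to the strongly connected operator $\uptrans{}_\mathcal{S}$ with the function $f|_\mathcal{S}$ gives that this sequence converges, as $k\to+\infty$, to the unique additive eigenvalue of $\smash{\avuptrans{f|_\mathcal{S}\,\,}{}}$, a constant independent of $x\in\mathcal{S}$. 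The main obstacle I anticipate is essentially bookkeeping: making the passage between $\uptrans{}$ acting on $\setofgambles{}(\statespace{})$ and $\uptrans{}_\mathcal{S}$ acting on $\setofgambles{}(\mathcal{S})$ fully rigorous, in particular showing that it interacts cleanly with the recursion defining $\tilde{m}_{f,k}$. Everything else reduces to induction and invoking Proposition~\ref{proposition: strongly connected then eigenvector}.
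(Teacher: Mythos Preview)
Your proposal is correct and follows essentially the same route as the paper: restrict to $\mathcal{S}$, show the restricted operator is an upper transition operator with strongly connected graph, and invoke Proposition~\ref{proposition: strongly connected then eigenvector}. The paper packages the bookkeeping you anticipate into a sequence of lemmas (that $\avuptrans{f}{}h(x)=\avuptrans{f}{}(h\indica{\mathcal{S}})(x)$ for $x\in\mathcal{S}$, that $(\avuptrans{f}{k}h)\vert_\mathcal{S}=\avuptrans{f,\mathcal{S}}{k}(h\vert_\mathcal{S})$, and that $\avuptrans{f,\mathcal{S}}{}$ is topical with strongly connected graph), but the content is the same as your induction plus restriction argument. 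One small stylistic difference: you prove the key identity $\uptrans{}h(x)=\uptrans{}(h\indica{\mathcal{S}})(x)$ by going back to the representing set $\settrans$ and arguing $T(x,y)=0$, whereas the paper---having deliberately ``forgotten'' about $\settrans$ in Section~\ref{section: trans operators and ergodicity}---derives it purely from \ref{transcoherence: subadditivity}, \ref{transcoherence: homogeneity}, \ref{transcoherence: monotonicity}, \ref{transcoherence: mixed additivity} together with $\uptrans{}\indica{\mathcal{S}^c}(x)=0$; both are valid, but the intrinsic argument is more in keeping with the paper's framing.
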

As a next step, we want to extend the domain of convergence of $\upprev[\mathrm{av},k](f \vert x)$ to all states $x \in \statespace{}$.
To do so, we will impose the additional property of being top class absorbing (TCA), which, as explained in Section~\ref{Sect: accessibility and topical maps}, demands that there is a strictly positive (lower) probability to reach the top class $\mathcal{R}$ in a finite time period.
Once in $\mathcal{R}$, the process can never escape $\mathcal{R}$ though. One would therefore expect that as time progresses---as more of these finite time periods go by---this lower probability increases, implying that the process will eventually be in $\mathcal{R}$ with practical certainty.
Furthermore, if the process transitions from $x \in \mathcal{R}^c$ to a state $y \in \mathcal{R}$, then Proposition~\ref{proposition: if top class then eigenvector in top class} guarantees that $\upprev[\mathrm{av},k](f \vert y)$ converges to a limit and that this limit value does not depend on the state $y$.
Finally, since the average is taken over a growing time interval, the initial finite number of time steps that it took for the process to transition from $x$ to $y$ will not influence the time average of $f$ in the limit. This leads us to suspect that $\upprev[\mathrm{av},k](f \vert x)$ converges to the same limit as $\upprev[\mathrm{av},k](f \vert y)$. Since this argument applies to any $x\in\mathcal{R}^c$, we are led to believe that $\uptrans{}$ is weakly ergodic. The following result confirms this.

\begin{proposition}\label{proposition: if top class absorbing then weakly ergodic}
$\uptrans{}$ is weakly ergodic if it satisfies (TCA). 
\end{proposition}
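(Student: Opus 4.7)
The plan is to combine (TCA) with the convergence on the top class already provided by Proposition~\ref{proposition: if top class then eigenvector in top class}. Let $\mathcal{R}$ be the top class of $\uptrans{}$ and set $g:=f\indica{\mathcal{R}}$. By Proposition~\ref{proposition: if top class then eigenvector in top class}, $\upprev[\mathrm{av},k](f\vert y)$ converges as $k\to+\infty$ to a common limit $\mu$ for all $y\in\mathcal{R}$. Moreover, because $\mathcal{R}$ is closed---which forces $T(y,\cdot)$ to be supported in $\mathcal{R}$ for every $T\in\settrans{}$ and every $y\in\mathcal{R}$---the map $\uptrans{}$ induces a well-defined upper transition operator $\uptrans{}_{\mathcal{R}}$ on $\setofgambles{}(\mathcal{R})$ whose accessibility graph coincides with the (strongly connected) induced subgraph on $\mathcal{R}$. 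Proposition~\ref{proposition: strongly connected then eigenvector} applied to $\uptrans{}_{\mathcal{R}}$ with the function $g\vert_{\mathcal{R}}$ thus produces an eigenvector $h^*\in\setofgambles{}(\mathcal{R})$ satisfying $g\vert_{\mathcal{R}}+\uptrans{}_{\mathcal{R}}h^*=h^*+\mu$. The task is to show that $\upprev[\mathrm{av},k](f\vert x)\to\mu$ also for every $x\in\mathcal{R}^c$.

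The first step is to replace $f$ by $g$. Unrolling the recursion in Equation~\eqref{Eq: recursive expression} for both $f$ and $g$ and applying mixed sub-additivity of $\uptrans{}$ to $f-g=f\indica{\mathcal{R}^c}$ yields
\begin{align*}
\bigl\vert\upprev[\mathrm{av},k](f\vert x)-\upprev[\mathrm{av},k](g\vert x)\bigr\vert \leq \tfrac{\supnorm{f}}{k}\sum_{i=0}^{k-1}\uptrans{}^{i}\indica{\mathcal{R}^c}(x).
\end{align*}
Closedness of $\mathcal{R}$ gives $\uptrans{}\indica{\mathcal{R}^c}\leq\indica{\mathcal{R}^c}$, so by monotonicity the sequence $i\mapsto \uptrans{}^{i}\indica{\mathcal{R}^c}$ is pointwise non-increasing; (TCA) and the finiteness of $\statespace{}$ then provide $k^*\in\nats{}$ and $\alpha\in[0,1)$ with $\uptrans{}^{k^*}\indica{\mathcal{R}^c}\leq \alpha\indica{\mathcal{R}^c}$, and iterating via monotonicity and non-negative homogeneity produces the geometric decay $\uptrans{}^{nk^*}\indica{\mathcal{R}^c}\leq\alpha^{n}\indica{\mathcal{R}^c}$. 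The right-hand side of the display above is therefore a Cesaro average of a vanishing sequence and tends to zero, reducing the problem to $\upprev[\mathrm{av},k](g\vert x)\to\mu$ for every $x\in\mathcal{R}^c$.

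For this final step I would extend $h^*$ to $\hat{h}\in\setofgambles{}(\statespace{})$ by $\hat{h}\vert_{\mathcal{R}^c}:=0$ and set $w_k:=\avuptrans{g}{k}(\hat{h})-k\mu$. Constant additivity combined with the eigenvector identity for $h^*$ and the closedness of $\mathcal{R}$ inductively give $w_k\vert_{\mathcal{R}}=h^*$ for every $k$; on $\mathcal{R}^c$ the sequence $u_k:=w_k\vert_{\mathcal{R}^c}$ then evolves under the non-linear map $\tilde{S}(u)(x):=-\mu+\uptrans{}(\hat{h}+\hat{u})(x)$, with $\hat{u}$ denoting the extension of $u$ by zero on $\mathcal{R}$. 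Mixed sub-additivity of $\uptrans{}$ gives $\vert\tilde{S}(u)-\tilde{S}(u')\vert(x)\leq \supnorm{u-u'}\uptrans{}\indica{\mathcal{R}^c}(x)$ on $\mathcal{R}^c$; iterating this estimate yields $\vert \tilde{S}^{n}(u)-\tilde{S}^{n}(u')\vert(x)\leq \supnorm{u-u'}\uptrans{}^{n}\indica{\mathcal{R}^c}(x)$, so applying it with $n=k^*$ and invoking $\uptrans{}^{k^*}\indica{\mathcal{R}^c}\leq\alpha\indica{\mathcal{R}^c}$ promotes $\tilde{S}^{k^*}$ to a strict sup-norm contraction. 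Consequently $\{u_k\}$---and hence $\{w_k\}$---is bounded in sup norm, so $\avuptrans{g}{k}(\hat{h})/k\to\mu$ pointwise; the sup-norm non-expansiveness of $\avuptrans{g}{}$ then transfers this to $\avuptrans{g}{k}(0)/k=\upprev[\mathrm{av},k](g\vert\cdot)$, and together with the reduction above this establishes weak ergodicity. The main obstacle is precisely this boundedness of $\{w_k\}$ on $\mathcal{R}^c$: a single step of $\tilde{S}$ is only non-expansive, and only its $k^*$-fold iteration against the bound $\uptrans{}^{k^*}\indica{\mathcal{R}^c}\leq\alpha\indica{\mathcal{R}^c}$ supplied by (TCA) promotes it to a strict contraction.
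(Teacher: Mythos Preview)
Your argument is correct and takes a genuinely different route from the paper's. The paper neither performs your Ces\`aro reduction from $f$ to $g=f\indica{\mathcal{R}}$ nor anchors a contraction argument on $\mathcal{R}^c$ to the eigenvector on $\mathcal{R}$. Instead it proves a general sandwiching statement (Proposition~\ref{prop: if T is absorbing, then limit of m_f,k does not depend on S^c}): for any absorbing closed class $\mathcal{S}$, the $\liminf$ and $\limsup$ of $\overline{m}_{f,k}$ on all of $\statespace{}$ are squeezed between the corresponding extrema over $\mathcal{S}$. The two devices there are the approximation $\supnorm{\uptrans{}^\ell h-\uptrans{}^\ell(h\indica{\mathcal{S}})}\leq\supnorm{h}\epsilon$ for large $\ell$ (Lemma~\ref{lemma: absorbing implies that T^k h only depends on h in R}) and the relation $\overline{m}_{f,k+\ell}\approx\uptrans{}^\ell\overline{m}_{f,k}$ for large $k$ (Lemma~\ref{lemma: average is equal to T^k average}); combined, they wash the behaviour on $\mathcal{S}^c$ out of the picture without ever invoking the eigenvector explicitly, and the conclusion then follows from the already-known convergence on $\mathcal{R}$ (Proposition~\ref{proposition: if top class then eigenvector in top class}).

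Your contraction route is sharper in one respect: it actually gives convergence of $u_k$ to a fixed point of $\tilde{S}$, not just boundedness, and hence a potential rate. The paper's route is more modular: Proposition~\ref{prop: if T is absorbing, then limit of m_f,k does not depend on S^c} is stated for an arbitrary absorbing closed class and is reused verbatim for single transition matrices in the repetition-independence section. Two minor remarks on your write-up: (i) the reduction to $g$ is correct but unnecessary---the second step works with $f$ directly, since the eigenvector equation on $\mathcal{R}$ already reads $f\vert_{\mathcal{R}}+\uptrans{}_{\mathcal{R}}h^*=h^*+\mu$ and the $\mathcal{R}^c$-dynamics merely pick up an extra additive term that the contraction argument absorbs; (ii) the iterated pointwise estimate $\vert\tilde{S}^n(u)-\tilde{S}^n(u')\vert(x)\leq\supnorm{u-u'}\,\uptrans{}^n\indica{\mathcal{R}^c}(x)$ hinges on the fact that $\uptrans{}^m\indica{\mathcal{R}^c}$ vanishes on $\mathcal{R}$ (Lemma~\ref{lemma: not from S to Sc}), so that the zero-extension of the intermediate bound dominates on all of $\statespace{}$ and one more application of $\uptrans{}$ yields $\uptrans{}^{m+1}\indica{\mathcal{R}^c}$ rather than a product of sup-norms---this is what distinguishes your bound from a naive iteration and is worth stating explicitly.
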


Conversely, suppose that $\uptrans{}$ does not satisfy (TCA).
Then there are two possibilities: 
either there is no top class or there is a top class but it is not absorbing.
If there is no top class, then it can be easily deduced that there are at least two closed communication classes $\mathcal{S}_1$ and $\mathcal{S}_2$.
As discussed earlier, the process cannot leave the classes $\mathcal{S}_1$ and $\mathcal{S}_2$ once it has reached them.
So if it starts in one of these communication classes, the process' dynamics outside this class are irrelevant for the behaviour of the resulting time average.
In particular, if we let $f$ be the function that takes the constant value $c_1$ in $\mathcal{S}_1$ and $c_2$ in $\mathcal{S}_2$, with $c_1 \not= c_2$, then we would expect that $\upprev[\mathrm{av},k](f \vert x) = c_1$ and $\upprev[\mathrm{av},k](f \vert y) = c_2$ for all $k \in \natz{}$, any $x \in \mathcal{S}_1$ and any $y \in \mathcal{S}_2$.
In fact, this can easily be formalised by means of Proposition~\ref{proposition: if top class then eigenvector in top class}.
Hence, $\upprev[\mathrm{av},\infty](f \vert x)=c_1\neq c_2=\upprev[\mathrm{av},\infty](f \vert y)$, so the upper transition operator $\uptrans{}$ cannot be weakly ergodic.
In other words, if $\uptrans{}$ is weakly ergodic, there must be a top class.

\begin{proposition}\label{prop: top class if weak ergodicity}
$\uptrans{}$ has a top class if it is weakly ergodic.
\end{proposition}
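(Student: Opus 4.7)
The plan is to argue by contrapositive: I will show that if $\uptrans{}$ has no top class, then $\uptrans{}$ is not weakly ergodic.

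The first step is to turn the absence of a top class into structural information about $\mathscr{C}$. Since $\statespace{}$ is finite and non-empty, so is $\mathscr{C}$, and its partial order induced by $\to$ therefore admits at least one maximal element; moreover, in a finite poset every element lies below some maximal one. If $\mathscr{C}$ had a unique maximal class $\mathcal{R}$, then every class would be dominated by $\mathcal{R}$ and $\mathcal{R}$ would be the top class, contradicting our assumption. Hence there exist at least two distinct maximal classes $\mathcal{S}_1,\mathcal{S}_2\in\mathscr{C}$, both of which are closed by Lemma~\ref{lemma: inescapable is maximal for communication classes}. In particular $\mathcal{S}_1\cap\mathcal{S}_2=\emptyset$.

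The second step is to exhibit a function $f\in\setofgambles{}(\statespace{})$ whose upper expected time average takes two distinct, $k$-independent values on $\mathcal{S}_1$ and $\mathcal{S}_2$. I would take $f\coloneqq\indica{\mathcal{S}_2}$. For any $x\in\mathcal{S}_1$, disjointness gives $\indica{\mathcal{S}_2}\indica{\mathcal{S}_1}=0$, so Proposition~\ref{proposition: if top class then eigenvector in top class} applied to $\mathcal{S}_1$ yields $\upprev[\mathrm{av},k](f\vert x)=\upprev[\mathrm{av},k](f\indica{\mathcal{S}_1}\vert x)=\upprev[\mathrm{av},k](0\vert x)=0$ for every $k\in\nats{}$. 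For any $y\in\mathcal{S}_2$, I would apply Proposition~\ref{proposition: if top class then eigenvector in top class} to $\mathcal{S}_2$ with the auxiliary function $\indica{\mathcal{S}_2^c}$ to deduce
\[
\upprev[\mathrm{av},k](\indica{\mathcal{S}_2^c}\vert y)=\upprev[\mathrm{av},k](\indica{\mathcal{S}_2^c}\indica{\mathcal{S}_2}\vert y)=0.
\]
Non-negativity of $\lowprev{\mathrm{av},k}$ on non-negative functions then forces $\lowprev{\mathrm{av},k}(\indica{\mathcal{S}_2^c}\vert y)=0$, and writing $\indica{\mathcal{S}_2}=1-\indica{\mathcal{S}_2^c}$ together with constant additivity and conjugacy gives $\upprev[\mathrm{av},k](f\vert y)=\upprev[\mathrm{av},k](\indica{\mathcal{S}_2}\vert y)=1-\lowprev{\mathrm{av},k}(\indica{\mathcal{S}_2^c}\vert y)=1$.

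With these two constant values established, $\lim_{k\to+\infty}\upprev[\mathrm{av},k](f\vert x)=0\neq 1=\lim_{k\to+\infty}\upprev[\mathrm{av},k](f\vert y)$, so the requirement in the definition of weak ergodicity of a common state-independent limit for every $f\in\setofgambles{}(\statespace{})$ fails, proving the contrapositive. The main obstacle I anticipate is the small chain of manipulations used to obtain $\upprev[\mathrm{av},k](\indica{\mathcal{S}_2}\vert y)=1$: a direct application of Proposition~\ref{proposition: if top class then eigenvector in top class} with $\indica{\mathcal{S}_2}$ is tautological since $\indica{\mathcal{S}_2}\indica{\mathcal{S}_2}=\indica{\mathcal{S}_2}$, so one has to route through the complementary indicator and invoke conjugacy. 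Everything else—finding the two closed classes from the absence of a top class and reducing $f$ to each of them via Proposition~\ref{proposition: if top class then eigenvector in top class}—is straightforward.
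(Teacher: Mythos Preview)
Your proposal is correct and follows the same contrapositive strategy as the paper: exhibit two distinct closed communication classes and a function whose upper expected time averages take different constant values on each. The paper chooses $f=c_1\indica{\mathcal{S}_1}+c_2\indica{\mathcal{S}_2}$ and, using Lemma~\ref{lemma: average does not depend on f outside S} together with Lemma~\ref{lemma: bounds average}, reads off $\upprev[\mathrm{av},k](f\vert\cdot)=c_i$ on $\mathcal{S}_i$ directly---thereby avoiding your detour through $\indica{\mathcal{S}_2^c}$ and conjugacy---but your route via Proposition~\ref{proposition: if top class then eigenvector in top class} is equally valid.
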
 

Finally, suppose that there is a top class $\mathcal{R}$, but that it is not absorbing.
This implies that there is an $x \in \mathcal{R}^c$ and a compatible precise model such that the process is guaranteed to remain in $\mathcal{R}^c$ given that it started in $x$.\footnote{For the sake of this intuitive explanation, we have assumed the set $\settrans$ to be closed. If this is not the case, we can actually not guarantee the existence of such a precise compatible model.
Our proof of Proposition~\ref{prop: TCA if weak ergodicity} uses a more involved---yet less intuitive---argument that does not require $\settrans$ to be closed.}
If we now let $f = \indica{\mathcal{R}^c}$, then conditional on the fact that $X_0 = x$, the expected time average of $f$ corresponding to this precise model is equal to $1$. 
Furthermore, since $f\leq1$, no other process can yield a higher expected time average. 
The upper expected time average $\upprev[\mathrm{av},k](f \vert x)$ is therefore equal to $1$ for all $k \in \nats{}$.
However, using Proposition~\ref{proposition: if top class then eigenvector in top class}, we can also show that $\upprev[\mathrm{av},k](f \vert y) = 0$ for any $y \in \mathcal{R}$ and all $k \in \nats{}$.
Hence, $\upprev[\mathrm{av},\infty](f \vert x)=1\neq 0=\upprev[\mathrm{av},\infty](f \vert y)$, which precludes $\uptrans{}$ from being weakly ergodic.

\begin{proposition}\label{prop: TCA if weak ergodicity}
$\uptrans{}$ satisfies (TCA) if it is weakly ergodic and has a top class.
\end{proposition}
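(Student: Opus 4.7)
The plan is to argue by contraposition: assuming $\uptrans{}$ has a top class $\mathcal{R}$ but fails to satisfy (TCA), I will exhibit $f\in\setofgambles{}(\statespace{})$ and $x^*\in\statespace{}$ for which $\upprev[\mathrm{av},k](f\vert x^*) = 1$ for every $k\in\nats{}$, while $\upprev[\mathrm{av},k](f\vert y) = 0$ for every $y\in\mathcal{R}$ and every $k\in\nats{}$, contradicting weak ergodicity. The natural choice is $f\coloneqq\indica{\mathcal{R}^c}$: since $\mathcal{R}$ is a closed communication class and $f\,\indica{\mathcal{R}}\equiv 0$, Proposition~\ref{proposition: if top class then eigenvector in top class} immediately gives $\upprev[\mathrm{av},k](f\vert y) = \upprev[\mathrm{av},k](f\,\indica{\mathcal{R}}\vert y) = \upprev[\mathrm{av},k](0\vert y) = 0$ for every $y\in\mathcal{R}$. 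Failure of (TCA), on the other hand, provides some $x^*\in\mathcal{R}^c$ with $g_k(x^*) = 1$ for all $k\in\natz{}$, where $g_k \coloneqq \uptrans{}^k\indica{\mathcal{R}^c}$.

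The first key step is to show that $g_k$ decreases pointwise to a limit $g_\infty\colon\statespace{}\to[0,1]$. Because $\mathcal{R}$ is closed, Lemma~\ref{lemma: directed path} gives $\uptrans{}\indica{\{z\}}(y) = 0$ for every $y\in\mathcal{R}$ and every $z\in\mathcal{R}^c$; sub-additivity \ref{transcoherence: subadditivity} then yields $\uptrans{}\indica{\mathcal{R}^c}(y) = 0$ on $\mathcal{R}$ and hence $\uptrans{}\indica{\mathcal{R}^c}\leq\indica{\mathcal{R}^c}$, which propagates via monotonicity \ref{transcoherence: monotonicity} to $g_{k+1}\leq g_k$. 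Set $C_\infty\coloneqq\{x\in\statespace{}:g_\infty(x)=1\}$; then $x^*\in C_\infty$, so $C_\infty\neq\emptyset$, and since every $g_k$ vanishes on $\mathcal{R}$ also $C_\infty\subseteq\mathcal{R}^c$, whence $C_\infty\neq\statespace{}$. The crucial technical step is the upper-invariance claim $\uptrans{}\indica{C_\infty}(x) = 1$ for every $x\in C_\infty$. Since $\uptrans{}$ is topical, it is non-expansive in the supremum norm on $\reals{}^n$ and therefore continuous, so passing to the limit in $\uptrans{}g_k(x)=g_{k+1}(x)=1$ yields $\uptrans{}g_\infty(x)=1$. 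Because $\statespace{}$ is finite and $C_\infty\neq\statespace{}$, $\delta\coloneqq\min\{1-g_\infty(y):y\notin C_\infty\}>0$, and for any probability mass function $p\in\settrans[x]$,
\begin{equation*}
\sum_{y\in\statespace{}} p(y)\,g_\infty(y) \leq p(C_\infty) + (1-\delta)\,p(C_\infty^c) = 1 - \delta\,p(C_\infty^c).
\end{equation*}
Hence $\uptrans{}g_\infty(x)=1$ forces $\sup_{p\in\settrans[x]}p(C_\infty)=1$, i.e., $\uptrans{}\indica{C_\infty}(x)=1$. This gap argument---rather than simply selecting a $p$ concentrated on $C_\infty$---is what lets me avoid assuming that $\settrans{}$ is closed, and is the step I expect to be the main obstacle.

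With upper-invariance of $C_\infty$ in hand, a straightforward induction on the recursion~\eqref{Eq: recursive expression} yields $\tilde{m}_{\indica{C_\infty},k}(x)\geq k\,\indica{C_\infty}(x)$ for every $x\in\statespace{}$ and every $k\in\nats{}$. Indeed, for $x\in C_\infty$ and $k\geq 2$, combining the inductive hypothesis with \ref{transcoherence: monotonicity} and \ref{transcoherence: homogeneity} gives
\begin{equation*}
\tilde{m}_{\indica{C_\infty},k}(x)
= \indica{C_\infty}(x) + \uptrans{}\tilde{m}_{\indica{C_\infty},k-1}(x)
\geq 1 + \uptrans{}\bigl((k-1)\,\indica{C_\infty}\bigr)(x)
= 1 + (k-1)\uptrans{}\indica{C_\infty}(x)
= k,
\end{equation*}
while for $x\notin C_\infty$ the bound is trivial because $\tilde{m}_{\indica{C_\infty},k}\geq 0$ (as $\indica{C_\infty}\geq 0$ and $\uptrans{}$ preserves non-negativity). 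Since $\indica{C_\infty}\leq\indica{\mathcal{R}^c}$ and the map $h\mapsto\tilde{m}_{h,k}$ is monotone (by a further trivial induction on \eqref{Eq: recursive expression}), we get $\tilde{m}_{\indica{\mathcal{R}^c},k}(x^*)\geq k$, which together with the trivial upper bound $\tilde{m}_{\indica{\mathcal{R}^c},k}\leq k$ coming from $\indica{\mathcal{R}^c}\leq 1$ and \ref{transcoherence: constant addivity} gives equality. Dividing by $k$ yields $\upprev[\mathrm{av},k](f\vert x^*)=1$ for every $k\in\nats{}$, delivering the desired contradiction with weak ergodicity.
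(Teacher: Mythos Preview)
Your proof is correct and follows essentially the same route as the paper: your set $C_\infty$ coincides with the set $A$ the paper constructs in Lemma~\ref{lemma: not TCA then TI_A equals one}, your upper-invariance claim $\uptrans{}\indica{C_\infty}(x)=1$ on $C_\infty$ is exactly the paper's $\indica{A}\leq\uptrans{}\indica{A}$, and the subsequent induction $\tilde m_{\indica{C_\infty},k}\geq k\indica{C_\infty}$ together with the vanishing on $\mathcal{R}$ via Proposition~\ref{proposition: if top class then eigenvector in top class} mirror the paper's argument. The only cosmetic differences are that the paper proves upper-invariance via a one-step threshold bound $\uptrans{}^k\indica{\mathcal{R}^c}\leq\indica{A}+\alpha\indica{A^c}$ rather than your limit-plus-gap argument, and takes $f=\indica{A}$ directly instead of passing through $\indica{\mathcal{R}^c}$ by monotonicity; note also that your gap step, phrased with $p\in\settrans[x]$, can be kept purely axiomatic by writing $g_\infty\leq(1-\delta)+\delta\indica{C_\infty}$ and applying \ref{transcoherence: constant addivity}--\ref{transcoherence: monotonicity} and \ref{transcoherence: homogeneity}.
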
 
Together with Propositions~\ref{proposition: if top class absorbing then weakly ergodic} and~\ref{prop: top class if weak ergodicity}, this allows us to conclude that (TCA) is a necessary and sufficient condition for weak ergodicity. 



\begin{theorem}\label{theorem: weakly ergodic iff top class absorbing}
$\uptrans{}$ is weakly ergodic if and only if it satisfies (TCA).
\end{theorem}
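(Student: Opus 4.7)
The plan is essentially just to bundle together the three preceding propositions. Theorem~\ref{theorem: weakly ergodic iff top class absorbing} is a biconditional, so I would split the argument into the two natural implications and invoke the earlier results as black boxes.

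For the sufficiency direction (``if''), I would simply cite Proposition~\ref{proposition: if top class absorbing then weakly ergodic}, which already states exactly that (TCA) implies weak ergodicity. There is nothing more to do here.

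For the necessity direction (``only if''), suppose $\uptrans{}$ is weakly ergodic. Then Proposition~\ref{prop: top class if weak ergodicity} guarantees that $\uptrans{}$ has a top class. Once the existence of a top class is established, Proposition~\ref{prop: TCA if weak ergodicity} is directly applicable and yields that $\uptrans{}$ satisfies (TCA). The logical structure is a two-step chain: weak ergodicity $\Rightarrow$ (top class exists) $\Rightarrow$ (top class absorbing), with each arrow supplied by one of the earlier propositions.

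Since both directions are immediate consequences of results already proven, there really is no obstacle at the level of the theorem itself. All the mathematical substance, and all the genuine difficulty, sits inside Propositions~\ref{proposition: if top class absorbing then weakly ergodic}--\ref{prop: TCA if weak ergodicity}: in particular, the delicate argument (relegated to the appendix) that one can construct a ``bad'' finitary function witnessing non-convergence when (TCA) fails, without assuming $\settrans{}$ to be closed. The theorem statement itself simply harvests these three propositions into a clean necessary and sufficient condition, and the proof is no more than two or three lines of cross-referencing.
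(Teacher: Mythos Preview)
Your proposal is correct and matches the paper's own proof essentially verbatim: the paper derives sufficiency from Proposition~\ref{proposition: if top class absorbing then weakly ergodic} and necessity from Proposition~\ref{prop: top class if weak ergodicity} combined with Proposition~\ref{prop: TCA if weak ergodicity}, in exactly the two-step chain you describe.
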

\begin{proof}
That (TCA) is a sufficient condition follows from Proposition~\ref{proposition: if top class absorbing then weakly ergodic}.
Necessity follows from Proposition~\ref{prop: top class if weak ergodicity} together with Proposition~\ref{prop: TCA if weak ergodicity}.
\end{proof}

\section{Weak Ergodicity for Imprecise Markov Chains Under Repetition Independence}\label{sect: weak ergo in repetition independence}

So far, we have ignored imprecise Markov chains under repetition independence in our analysis of weakly ergodic behaviour.
Within the field of imprecise probability, these imprecise Markov chains are less studied because (i) they can incorporate fewer types of model uncertainty and (ii) they seem difficult to handle computationally, in the sense that there are (almost) no methods that are able to efficiently solve inference problems for such models; see \cite[Section 5.7]{8535240} for more details.
On the other hand, their relevance should not be underestimated; they model the practical situation where we believe there is a single and fixed transition matrix $T$, but only have partial knowledge about the numerical values that make up this matrix.

The concepts of ergodicity and weak ergodicity can be defined in a similar way as for imprecise Markov chains under epistemic irrelevance and imprecise Markov chains under complete independence.
For any $f\in\setofgambles{}(\statespace{})$, any $x\in\statespace{}$ and any $k\in\nats$, we let 
\begin{align*}
\riupprev{k}(f\vert x) \coloneqq \riupprev{\settrans}(f(X_k) \vert X_1 = x) \quad \text{and} \quad  \avriupprev{k}(f\vert x) \coloneqq \riupprev{\settrans}(f_{\mathrm{av}}(X_{1:k}) \vert X_1 = x).
\end{align*}
Then we say that the imprecise Markov chain $\smash{\rimarkov{\settrans}}$ is ergodic if, for all $f\in\setofgambles{}(\statespace{})$, the upper expectation $\smash{\riupprev{k}(f\vert x)}$ converges to a value $\riupprev{\infty}(f) \coloneqq \lim_{k\to+\infty} \riupprev{k}(f\vert x)$ that does not depend on $x\in\statespace{}$.
Analogously, we call $\smash{\rimarkov{\settrans}}$ weakly ergodic if, for all $f\in\setofgambles{}(\statespace{})$, the upper expectation $\smash{\avriupprev{k}(f\vert x)}$ converges to a value $\smash{\avriupprev{\infty}(f)} \coloneqq \lim_{k\to+\infty} \avriupprev{k}(f\vert x)$ that does not depend on $x\in\statespace{}$.
Unfortunately, however, and unlike what we saw for the inferences $\upprev[k](f\vert x)$ and $\upprev[\mathrm{av},k](f\vert x)$ in imprecise Markov chains under epistemic irrelevance and complete independence, there are no known recursive expressions that describe the behaviour of the inferences $\riupprev{k}(f\vert x)$ and $\avriupprev{k}(f\vert x)$ in terms of a single (topical) map $F\colon\setofgambles{}(\statespace{})\to\setofgambles{}(\statespace{})$.
Hence, in order to study weak ergodicity (and conventional ergodicity) for these models, we will have to rely on a different approach than the one we have used before.
Moreover, because of the absence of any recursive expressions, it is a priori not certain whether the inferences $\avriupprev{k}(f\vert x)$ will only depend on $\settrans$\/ through the upper transition operator $\uptrans{}$.
So, contrary to what we did in Sections~\ref{section: trans operators and ergodicity} to \ref{section: Result}, we cannot simply forget about the set $\settrans$ here.
Instead, we will regard the set $\settrans$---which is always assumed to be separately specified---as the primary object that determines the values of the inferences  $\avriupprev{k}(f\vert x)$.

Before we begin our analysis, recall that the model $\rimarkov{\settrans{}}$ consists of all homogeneous (precise) Markov chains~$\mathrm{P}$ that are compatible with $\settrans{}$, in the sense that $\mathrm{P}(X_{2} \vert X_1 = x) \in \settrans[x]$ for all $x \in \statespace{}$---where we already took into account the homogeneity and the Markov property for each $\mathrm{P}$.
Since each homogeneous Markov chain $\mathrm{P}$ has a transition matrix $T$, we could alternatively write that $\mathrm{P} \in \rimarkov{\settrans{}}$ if and only if $T(x,\cdot) \in \settrans[x]$ for all $x \in \statespace{}$ or, since $\settrans{}$ is separately specified, if and only if $T \in \settrans{}$. 
Furthermore, note that each $\mathrm{P} \in \rimarkov{\settrans{}}$ is itself an imprecise Markov chain, where the upper transition operator is now linear and characterised by the transition matrix $T$ of $\mathrm{P}$.
The independence assumption---epistemic irrelevance, complete independence or repetition independence---obviously does not matter here; they are all equivalent.
Hence, the recursive expressions \eqref{Eq: recursive expression} and \eqref{Eq: recursive expression 2}, and, more generally, all our results for imprecise Markov chains under epistemic irrelevance or complete independence, also hold for a homogeneous Markov chain $\mathrm{P} \in \rimarkov{\settrans{}}$.
The transition matrix $T$ corresponding to $\mathrm{P}$ then takes the role of a particular upper transition operator $\uptrans$.
In the remainder, we will assume that the reader has taken notice of this fact and we will simply apply our results from the previous sections in this more specific case without further ado.
On top of that, we will often use the basic relation that $T^k h \leq \uptrans{}^k h$ for all $T\in\settrans$, all $h\in\setofgambles{}(\statespace{})$ and all $k\in\natz{}$.
This intuitive inequality can be easily derived from the definition of $\uptrans$ and the monotonicity [\ref{transcoherence: monotonicity}] of $\uptrans{}$ and $T$.

For notational convenience, we use $T_f$ and $\smash{\prev{\mathrm{av},k}^{\mathrm{P}}}$ to denote the objects $\smash{\avuptrans{f}{}}$ and $\smash{\upprev[\mathrm{av},k]}$---or, equivalently, $\smash{\avriupprev{k}}$---that correspond to any homogeneous Markov chain $\mathrm{P} \in \rimarkov{\settrans{}}$; so we let $\smash{T_f h \coloneqq f + Th}$, with $T$ the transition matrix of $\mathrm{P}$, and $\smash{\prev{\mathrm{av},k}^{\mathrm{P}}(f \vert x)} \coloneqq \smash{\prev{\mathrm{P}}(f_{\mathrm{av}}(X_{1:k}) \vert X_1 = x)}$ for all $f,h \in \setofgambles{}(\statespace{})$, all $x \in \statespace{}$ and all $k \in \nats{}$.
Taking into account the previous considerations, we can then write that, for all $f \in \setofgambles{}(\statespace{})$, all $x \in \statespace{}$ and all $k \in \nats{}$,
\begin{align}\label{Eq: recursive formula rep. independence}
\avriupprev{k}(f \vert x) 
= \sup_{\mathrm{P} \in \rimarkov{\settrans{}}} \prev{\mathrm{av},k}^{\mathrm{P}}(f \vert x)
= \sup_{T \in \settrans{}} \tfrac{1}{k} [T_f^{k}(0)](x), 
\end{align}
where the last equality holds because of Equation~\eqref{Eq: recursive expression 2} and the fact that $\mathrm{P} \in \rimarkov{\settrans{}}$ if and only if $T\in\settrans{}$ (for the transition matrix $T$ correponding to $\mathrm{P}$).
The expression above will now serve as the main starting point in our further study of $\avriupprev{k}(f\vert x)$.

As a first step, we show that an imprecise Markov chain $\smash{\rimarkov{\settrans}}$ is weakly ergodic if the upper transition operator $\uptrans{}$ corresponding to $\settrans$ has a graph $\mathscr{G}(\uptrans{})$ that is strongly connected.
Moreover, in that case, we also have that $\avriupprev{\infty}(f) = \upprev[\mathrm{av},\infty](f)$ for all $f\in\setofgambles{}(\statespace{})$.
The result can be obtained in a rather straightforward fashion from the following lemma, which essentially states that, if $\avuptrans{f}{}$ has an eigenvalue, there is a homogeneous Markov chain $\mathrm{P}\in\rimarkov{\settrans}$ that behaves `approximately' weakly ergodic.
Its proof makes use of the supremum norm $\supnorm{\cdot}$, defined by $\supnorm{h} \coloneqq \max_{x \in \statespace{}} \vert h(x) \vert$ for all $h \in \setofgambles{}(\statespace{})$.

\begin{lemma}\label{lemma: if T_f has eigenvalue then repetition independence weakly ergodic}
Consider any $f\in\setofgambles{}(\statespace{})$.
If the map $\avuptrans{f}{}$ has an eigenvalue $\mu$, then, for any $\epsilon>0$, there is some $T\in\settrans$ such that
\begin{align*}
\mu-\epsilon \leq \liminf_{k\to+\infty} \tfrac{1}{k} [T_f^{k}(0)] \leq \limsup_{k\to+\infty} \tfrac{1}{k} [T_f^{k}(0)] \leq \mu.
\end{align*}
\end{lemma}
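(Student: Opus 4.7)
The plan is to exploit an eigenvector of $\avuptrans{f}{}$ in order to pick a single transition matrix $T \in \settrans$ that mimics the eigenvalue behaviour up to $\epsilon$. Concretely, I would fix an eigenvector $h \in \setofgambles{}(\statespace{})$ with $\avuptrans{f}{} h = h + \mu$, equivalently $\uptrans{} h = h + \mu - f$. By the separately specified property \ref{sep specified} of $\settrans$, applied to $h$ and $\epsilon$, there exists $T \in \settrans$ such that $\uptrans{} h - \epsilon \leq Th \leq \uptrans{} h$. Adding $f$ to all sides then yields the one-step sandwich $h + \mu - \epsilon \leq T_f h \leq h + \mu$.

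The next step is to iterate this bound. Because $T$ is a row-stochastic matrix it is linear, monotone and constant-additive, so applying $T$ to the one-step sandwich, using $Th \leq \uptrans{} h = h + \mu - f$ (and the analogous lower bound), and then adding $f$, yields the two-step version $h + 2(\mu - \epsilon) \leq T_f^2 h \leq h + 2\mu$. A routine induction then gives
\begin{align*}
h + k(\mu - \epsilon) \leq T_f^k h \leq h + k\mu \qquad \text{for all } k \in \nats{}.
\end{align*}
To convert this into information about $T_f^k(0)$ rather than $T_f^k h$, I would use the identity $T_f^k(g) - T_f^k(g') = T^k(g - g')$, which follows from $T_f g - T_f g' = T(g - g')$ (by linearity of $T$) and a short induction on $k$. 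Taking $g = h$, $g' = 0$ gives $T_f^k(0) = T_f^k h - T^k h$. Since $T^k$ is row-stochastic, $T^k h$ is uniformly bounded in $k$ by $\supnorm{h}$, so the displayed sandwich becomes $h + k(\mu - \epsilon) - \supnorm{h} \leq T_f^k(0) \leq h + k\mu + \supnorm{h}$.

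Dividing pointwise by $k$ and letting $k \to +\infty$ makes the bounded terms vanish, leaving $\mu - \epsilon \leq \liminf_{k \to +\infty} T_f^k(0)/k \leq \limsup_{k \to +\infty} T_f^k(0)/k \leq \mu$, which is exactly the stated conclusion. The only real subtlety is spotting the identity $T_f^k(0) = T_f^k h - T^k h$, which is what lets us transfer the clean iterative bound on $T_f^k h$ to the quantity $T_f^k(0)$ we actually care about, bypassing the fact that $T_f^k(0)$ itself need not converge. Everything else is routine manipulation using \ref{sep specified}, the linearity of a particular $T \in \settrans$, and the uniform boundedness $\supnorm{T^k h} \leq \supnorm{h}$.
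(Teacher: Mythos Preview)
Your argument is correct. The identity $T_f^k(g)-T_f^k(g')=T^k(g-g')$ is easily verified by induction since $T$ is linear, and together with the sandwich $h+k(\mu-\epsilon)\leq T_f^k h\leq h+k\mu$ and the uniform bound $\supnorm{T^k h}\leq\supnorm{h}$ it yields both the lower and the upper limit bound in one stroke.

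The paper takes a slightly different route at the transfer step. Instead of using linearity of $T$ to relate $T_f^k h$ and $T_f^k(0)$ exactly, it invokes the non-expansiveness of the topical map $T_f$ with respect to the supremum norm (citing \cite[Proposition~1.1]{GUNAWARDENA2003141}) to get $\supnorm{T_f^k h - T_f^k(0)}\leq\supnorm{h}$. That yields $\supnorm{h+k\mu-T_f^k(0)}\leq k\epsilon+\supnorm{h}$, from which only the lower bound $\mu-\epsilon\leq\liminf$ is extracted; the upper bound $\limsup\leq\mu$ is obtained separately via $T_f^k(0)\leq\avuptrans{f}{k}(0)$ and Lemma~\ref{lemma: cycle time exists independently of starting point}. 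Your approach is more elementary---it avoids the external non-expansiveness result and exploits the linearity of the specific $T$ you chose---and it also delivers the sharp upper bound $\mu$ directly from the sandwich, rather than needing a detour through $\avuptrans{f}{}$. The paper's argument, by contrast, would work verbatim for any topical $T_f$, not just one arising from a linear $T$, though that extra generality is not needed here.
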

\begin{proof}
Suppose that the map $\avuptrans{f}{}$ has an eigenvalue $\mu$.
Then $\avuptrans{f}{}$ has at least one eigenvector $h\in\setofgambles{}(\statespace{})$ for which it holds that $\avuptrans{f}{k} h = h + k \mu$ for all $k\in\nats$.
Fix any $\epsilon>0$.
Then, since $\settrans$ is assumed to be separately specified, we are allowed to use \ref{sep specified}, which implies that there is some $T\in\settrans$ such that $\avuptrans{f}{} h -\epsilon = f + \uptrans{}h -\epsilon \leq f + Th = T_f h \leq \avuptrans{f}{} h$.
The function $h$ is an eigenvector of $\avuptrans{f}{}$ with eigenvalue $\mu$, so it follows that $h + \mu - \epsilon \leq T_f h \leq h + \mu$.
Since $T_f$ is monotone [\ref{topical: monotonicity}] and satisfies constant additivity [\ref{topical: constant addivity}]---because $T$ is monotone [\ref{transcoherence: monotonicity}] and satisfies constant additivity [\ref{transcoherence: constant addivity}]---this implies that
\begin{align*}
h + 2\mu - 2\epsilon \leq T_f h + \mu - \epsilon = T_f (h + \mu - \epsilon) \leq T_f^2 h \leq T_f(h + \mu) = T_fh + \mu  \leq h + 2\mu.
\end{align*}
Using the same properties of $T_f$, this on its turn implies that
\begin{align*}
h + 3\mu - 3\epsilon 
\leq T_f^2 h + \mu - \epsilon 
= T_f (T_f h + \mu - \epsilon)
&= T_f^2 (h + \mu - \epsilon) \\
&\leq T_f^3 h 
\leq T_f^2(h + \mu) 
= T_f(T_f h + \mu)
= T_f^2 h + \mu  
\leq h + 3\mu.
\end{align*}
Repeating the argument above allows us to conclude that
$h + k\mu - k\epsilon \leq T_f^k h \leq h + k\mu$ for all $k\in\nats$, and therefore, that $\supnorm{h + k\mu - T_f^k h} \leq k\epsilon$ for all $k\in\nats$.
Furthermore, $T_f$ is topical [since, as we already mentioned above, $T_f$ satisfies \ref{topical: constant addivity} and \ref{topical: monotonicity}], so $T_f$ is non-expansive with respect to the supremum norm \cite[Proposition~1.1]{GUNAWARDENA2003141}. 
This implies that $\supnorm{T_f^k h - T_f^k(0)} \leq \supnorm{T_f^{k-1} h - T_f^{k-1}(0)} \leq \cdots \leq \supnorm{h - 0} = \supnorm{h}$ for all $k\in\nats$.
Then, recalling that $\supnorm{h + k\mu - T_f^k h} \leq k\epsilon$ for all $k\in\nats$, we can use the triangle inequality to infer that $\supnorm{h + k\mu - T_f^k(0)} \leq k \epsilon + \supnorm{h}$ or, equivalently, that $\supnorm{\tfrac{1}{k} h + \mu - \tfrac{1}{k}T_f^k(0)} \leq \epsilon + \tfrac{1}{k}\supnorm{h}$ for all $k\in\nats$.
Hence, we have that 
$(\tfrac{1}{k} h + \mu) - (\epsilon + \tfrac{1}{k} \supnorm{h})
\leq \tfrac{1}{k}T_f^k(0)
\text{ for all } k\in\nats$,
which implies that
\begin{align}\label{Eq: lemma: if T_f has eigenvalue then repetition independence weakly ergodic}
\mu - \epsilon
\leq \liminf_{k\to+\infty} \tfrac{1}{k}T_f^k(0).
\end{align}
On the other hand, we have that $\limsup_{k\to+\infty}\tfrac{1}{k}T_f^{k}(0) \leq \limsup_{k\to+\infty}\tfrac{1}{k}\avuptrans{f}{k} (0)$ because $T_f^k(0) \leq \avuptrans{f}{k} (0)$ for all $k\in\nats$.
Furthermore, since $h$ is an eigenvector of $\smash{\avuptrans{f}{}}$ with eigenvalue $\mu$, we have that $\lim_{k\to+\infty}\smash{\tfrac{1}{k}\avuptrans{f}{k} h} = \mu$, which by Lemma~\ref{lemma: cycle time exists independently of starting point} implies that $\lim_{k\to+\infty}\tfrac{1}{k}\avuptrans{f}{k}(0) = \mu$.
Hence, we have that $\limsup_{k\to+\infty}\tfrac{1}{k}T_f^{k}(0) \leq \mu$, which, together with Equation~\eqref{Eq: lemma: if T_f has eigenvalue then repetition independence weakly ergodic} and the fact that obviously $\smash{\liminf_{k\to+\infty}\tfrac{1}{k}T_f^{k}(0) \leq \limsup_{k\to+\infty}\tfrac{1}{k}T_f^{k}(0)}$, implies the desired statement.
\end{proof}

\begin{proposition}\label{prop: rep independence converges to epi irrelevance for strongly connected}
If $\mathscr{G}(\uptrans{})$ is strongly connected, then $\rimarkov{\settrans{}}$ is weakly ergodic and we have that\/ $\smash{\avriupprev{\infty}(f) = \upprev[\mathrm{av},\infty](f)}$ for all $f\in\setofgambles{}(\statespace{})$.
\end{proposition}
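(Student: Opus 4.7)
The plan is to show that both the limit superior and the limit inferior of $\avriupprev{k}(f\vert x)$ equal the unique eigenvalue of $\avuptrans{f}{}$ guaranteed by Proposition~\ref{proposition: strongly connected then eigenvector}, uniformly in $x$. The starting point is the explicit expression
\begin{align*}
\avriupprev{k}(f \vert x) = \sup_{T \in \settrans{}} \tfrac{1}{k}[T_f^{k}(0)](x)
\end{align*}
together with the observation that $\mathscr{G}(\uptrans{})$ strongly connected implies, by Proposition~\ref{proposition: strongly connected then eigenvector}, that $\upprev[\mathrm{av},\infty](f)$ exists and equals the unique eigenvalue $\mu$ of $\smash{\avuptrans{f}{}}$. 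So proving the proposition reduces to showing that $\smash{\lim_{k\to+\infty}\avriupprev{k}(f\vert x) = \mu}$ for all $x \in \statespace{}$.

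For the upper bound I would note that, for every $T \in \settrans{}$, the monotonicity of $T$ and $\uptrans{}$ together with $T h \leq \uptrans{} h$ yield $T_f^k(0) \leq \avuptrans{f}{k}(0)$ for all $k\in\nats$ by an easy induction (this uses only \ref{topical: monotonicity} for $T_f$ and the pointwise bound $Th\leq\uptrans{}h$). Taking the supremum over $T\in\settrans{}$ and dividing by $k$ gives $\avriupprev{k}(f\vert x) \leq \tfrac{1}{k}[\avuptrans{f}{k}(0)](x)$. Letting $k\to+\infty$ and using that $\tfrac{1}{k}\avuptrans{f}{k}(0)$ converges pointwise to $\mu$ (as recalled just above Proposition~\ref{proposition: strongly connected then eigenvector}, via Lemma~\ref{lemma: cycle time exists independently of starting point}) yields $\limsup_{k\to+\infty}\avriupprev{k}(f\vert x) \leq \mu$ for all $x\in\statespace{}$.

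For the lower bound I would invoke Lemma~\ref{lemma: if T_f has eigenvalue then repetition independence weakly ergodic}. Fix $\epsilon > 0$. Since $\smash{\avuptrans{f}{}}$ has the eigenvalue $\mu$, the lemma produces some $T \in \settrans{}$ (depending on $\epsilon$) such that
\begin{align*}
\mu - \epsilon \leq \liminf_{k\to+\infty}\tfrac{1}{k}[T_f^{k}(0)](x) \quad \text{for every } x \in \statespace{}.
\end{align*}
Because $T\in\settrans{}$ is a feasible choice in the supremum defining $\avriupprev{k}(f\vert x)$, we have $\tfrac{1}{k}[T_f^{k}(0)](x) \leq \avriupprev{k}(f\vert x)$ for every $k$ and $x$, whence $\liminf_{k\to+\infty}\avriupprev{k}(f\vert x) \geq \mu - \epsilon$. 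As $\epsilon > 0$ was arbitrary, the liminf is at least $\mu$ for every $x$.

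Combining the two bounds gives $\lim_{k\to+\infty}\avriupprev{k}(f\vert x) = \mu = \upprev[\mathrm{av},\infty](f)$ independently of $x\in\statespace{}$, which is exactly what is claimed. I do not expect any real obstacle here, as the heavy lifting has already been done in Proposition~\ref{proposition: strongly connected then eigenvector} and Lemma~\ref{lemma: if T_f has eigenvalue then repetition independence weakly ergodic}; the only subtlety worth being explicit about is the inductive verification of $T_f^k(0) \leq \avuptrans{f}{k}(0)$, which must be handled cleanly since $T_f$ and $\smash{\avuptrans{f}{}}$ both include the inhomogeneous term $f$.
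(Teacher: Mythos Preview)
Your proposal is correct and follows essentially the same approach as the paper's proof: both use Proposition~\ref{proposition: strongly connected then eigenvector} to identify $\upprev[\mathrm{av},\infty](f)$ with the unique eigenvalue $\mu$ of $\avuptrans{f}{}$, invoke Lemma~\ref{lemma: if T_f has eigenvalue then repetition independence weakly ergodic} for the lower bound $\liminf_{k\to+\infty}\avriupprev{k}(f\vert x)\geq\mu$, and bound the limsup above by $\mu$. The only cosmetic difference is that for the upper bound the paper appeals directly to the inclusion $\rimarkov{\settrans}\subseteq\eimarkov{\settrans}$ (giving $\avriupprev{k}(f\vert x)\leq\upprev[\mathrm{av},k](f\vert x)\to\mu$), whereas you prove the equivalent inequality $T_f^k(0)\leq\avuptrans{f}{k}(0)$ by induction before taking the supremum; both routes are equally valid and the induction you describe is precisely the content behind the set-inclusion inequality.
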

\begin{proof}
Fix any $f\in\setofgambles{}(\statespace{})$.
Since $\mathscr{G}(\uptrans{})$ is strongly connected, Proposition~\ref{proposition: strongly connected then eigenvector} implies that $\uptrans{}$ is weakly ergodic and that $\upprev{}_{\mathrm{av},\infty}(f) = \mu$ where $\mu$ is the unique eigenvalue of the map $\uptrans{}_f$.
Then it follows from Lemma~\ref{lemma: if T_f has eigenvalue then repetition independence weakly ergodic} that, for any $\epsilon>0$, there is a $T\in\settrans$ such that
$\mu-\epsilon \leq \liminf_{k\to+\infty} \tfrac{1}{k} [T_f^{k}(0)]$.
So, for any $x\in\statespace{}$, we have that
\begin{align*}
\mu \leq \sup_{T\in\settrans} \liminf_{k\to+\infty} \tfrac{1}{k}[T_f^k(0)](x)
 \leq  \liminf_{k\to+\infty} \sup_{T\in\settrans} \tfrac{1}{k}[T_f^k(0)](x)
  = \liminf_{k\to+\infty} \avriupprev{k}(f\vert x),
\end{align*}
where we used Equation~\eqref{Eq: recursive formula rep. independence} in the last step.
On the other hand, since $\avriupprev{k}(f\vert x) \leq \upprev{}_{\mathrm{av},k}(f\vert x)$ for all $k\in\nats$ (because $\rimarkov{\settrans} \subseteq \cimarkov{\settrans} \subseteq \eimarkov{\settrans}$), we also have that 
\begin{align*}
\liminf_{k\to+\infty} \avriupprev{k}(f\vert x) 
\leq \liminf_{k\to+\infty} \upprev{}_{\mathrm{av},k}(f\vert x)
= \lim_{k\to+\infty} \upprev{}_{\mathrm{av},k}(f\vert x)
= \upprev{}_{\mathrm{av},\infty}(f) = \mu.
\end{align*}
Hence, we conclude that $\lim_{k\to+\infty}\avriupprev{k}(f\vert x)$ exists and is equal to $\mu = \upprev{}_{\mathrm{av},\infty}(f)$ for all $x\in\statespace{}$ and therefore, that $\rimarkov{\settrans}$ is weakly ergodic and that $\avriupprev{\infty}(f) = \upprev{}_{\mathrm{av},\infty}(f)$.
\end{proof}

Proposition~\ref{prop: rep independence converges to epi irrelevance for strongly connected} provides a sufficient condition---having a graph $\mathscr{G}(\uptrans{})$ that is strongly connected---for an imprecise Markov chain $\rimarkov{\settrans}$ to be weakly ergodic.
However, as was the case for the imprecise Markov chains $\eimarkov{\settrans}$ and $\cimarkov{\settrans}$, this condition can actually be weakened to the condition that $\uptrans{}$ should be top class absorbing (TCA).
It will moreover turn out that this weaker accessibility condition is not only sufficient, but also necessary.

Both the sufficiency and necessity of (TCA) can be made intuitive using arguments that are somewhat similar to the ones we have used in Section~\ref{section: Result}, where we discussed the sufficiency and necessity of (TCA) for imprecise Markov chains under epistemic irrelevance and complete independence.
Before we explain why (TCA) is sufficient here as well, first observe that, for any closed class $\mathcal{S}$ in the graph $\mathscr{G}(\uptrans{})$---associated with $\settrans$ through $\uptrans{}$---the class $\mathcal{S}$ is also closed in the graph $\mathscr{G}(T)$, for all $T\in\settrans$:

\begin{samepage}
\begin{lemma}\label{lemma: S is maximal in G(uptrans) then S is maximal in G(T)}
If a class $\mathcal{S}$ is closed in $\mathscr{G}(\uptrans{})$, then, for all $T\in\settrans{}$, the class $\mathcal{S}$ is also closed in $\mathscr{G}(T)$.
\end{lemma}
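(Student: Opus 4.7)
The plan is to translate the graph-theoretic statement ``$\mathcal{S}$ is closed'' into an analytic statement about $\uptrans{}$ (and then about $T$), using the characterisation of edges of $\mathscr{G}(\uptrans{})$ in terms of $\uptrans{}\indica{y}(x)$. The key observation is that for a class $\mathcal{S}$, closedness is equivalent to the absence of edges leaving $\mathcal{S}$: if every edge from $\mathcal{S}$ stays inside $\mathcal{S}$, then obviously no directed path can escape $\mathcal{S}$; conversely, any path that does escape $\mathcal{S}$ must have a first edge crossing the boundary. Hence $\mathcal{S}$ closed in $\mathscr{G}(\uptrans{})$ is equivalent to $\uptrans{}\indica{y}(x) = 0$ for all $x \in \mathcal{S}$ and all $y \in \mathcal{S}^c$.

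Next, I would lift this inequality from individual indicators $\indica{y}$ to the indicator $\indica{\mathcal{S}^c} = \sum_{y\in\mathcal{S}^c}\indica{y}$ using subadditivity \ref{transcoherence: subadditivity} (together with nonnegativity from \ref{transcoherence: bounds}), obtaining $\uptrans{}\indica{\mathcal{S}^c}(x) = 0$ for all $x\in\mathcal{S}$. Then, for any $T\in\settrans{}$, the pointwise domination $T\indica{\mathcal{S}^c}(x) \leq \uptrans{}\indica{\mathcal{S}^c}(x)$---which is an immediate consequence of the definition of $\uptrans{}$ as an upper envelope---forces $T\indica{\mathcal{S}^c}(x) = 0$ for all $x\in\mathcal{S}$, since $T\indica{\mathcal{S}^c}\geq 0$ because $T$ is a (non-negative) row-stochastic matrix. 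Equivalently, $\sum_{y\in\mathcal{S}^c} T(x,y) = 0$, so $T(x,y) = 0$ for every $x\in\mathcal{S}$, $y\in\mathcal{S}^c$.

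Finally, I would apply the same edge-characterisation back in the other direction: since $T$ itself is (trivially) an upper transition operator, an edge in $\mathscr{G}(T)$ from $x$ to $y$ exists iff $T\indica{y}(x) = T(x,y) > 0$. The previous paragraph shows no such edge exists when $x\in\mathcal{S}$ and $y\in\mathcal{S}^c$, so no directed path in $\mathscr{G}(T)$ can leave $\mathcal{S}$, i.e., $\mathcal{S}$ is closed in $\mathscr{G}(T)$. I do not foresee any serious obstacle: the proof is essentially an unpacking of definitions, with the only mildly non-trivial step being the passage from ``no single edge leaves $\mathcal{S}$'' to ``no path leaves $\mathcal{S}$'', which is the standard observation that any path crossing $\mathcal{S}^c$ must do so via a first boundary edge.
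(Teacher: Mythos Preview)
Your proof is correct. It differs from the paper's in a minor but worth-noting way: the paper invokes Lemma~\ref{lemma: directed path} (the characterisation of directed paths of length $k$ via $\uptrans{}^k\indica{y}(x)>0$) and then uses the domination $T^k\indica{y}\leq\uptrans{}^k\indica{y}$ for \emph{all} $k$ to conclude directly that $x\not\to y$ in $\mathscr{G}(T)$. You instead reduce closedness to the single-step statement ``no edge leaves $\mathcal{S}$'' via the first-boundary-edge observation, and then only need the $k=1$ domination $T\indica{\mathcal{S}^c}\leq\uptrans{}\indica{\mathcal{S}^c}$. Your route is slightly more self-contained (it does not appeal to Lemma~\ref{lemma: directed path}), while the paper's route avoids the separate topological reduction from paths to edges. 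One small remark: your passage through the aggregate indicator $\indica{\mathcal{S}^c}$ is unnecessary---you could dominate each $T\indica{y}(x)\leq\uptrans{}\indica{y}(x)=0$ individually and skip the subadditivity step entirely---but it is of course harmless.
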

\begin{proof}
Let $\mathcal{S}$ be any closed class in $\mathscr{G}(\uptrans{})$ and consider any $x\in\mathcal{S}$.
By definition, we have that $x \not\to y$ in $\mathscr{G}(\uptrans{})$ for any $y \in \mathcal{S}^c$, which by Lemma~\ref{lemma: directed path} implies that $\uptrans{}^k \indica{y} (x) \leq 0$ for all $k \in \nats{}$.
Furthermore, for any $T\in\settrans{}$ and all $k\in\nats$, we have that $T^k \indica{y} (x) \leq \uptrans{}^k \indica{y} (x)$, so it follows that $T^k \indica{y} (x) \leq 0$.
Hence, by Lemma~\ref{lemma: directed path}, $x \not\to y$ in $\mathscr{G}(T)$.
This holds for all $x\in\mathcal{S}$ and all $y \in \mathcal{S}^c$, so $\mathcal{S}$ is also closed in $\mathscr{G}(T)$ for any $T\in\settrans{}$.
\end{proof}
\end{samepage}

Now suppose that the upper transition operator $\uptrans{}$ corresponding to $\settrans$ satisfies (TCA) with top class~$\mathcal{R}$, and that the process' initial state $x$ is in $\mathcal{R}$.
Because of the lemma above, $\mathcal{R}$ is also closed in $\mathscr{G}(T)$ for any $T\in\settrans$.
Hence, according to any (homogeneous) Markov chain $\mathrm{P}$ with transition matrix $T$,
the process' state can never leave $\mathcal{R}$.
So it is to be expected that the inference $\smash{\prev{\mathrm{av},k}^{\mathrm{P}}({f} \vert x)}$ will not be influenced by the behaviour of the Markov chain $\mathrm{P}$ outside of $\mathcal{R}$.
Since this is true for any $T\in\settrans$, and since $\smash{\avriupprev{k}(f \vert x)}$ is simply an upper envelope of $\prev{\mathrm{av},k}^{\mathrm{P}}({f} \vert x)$ over all compatible Markov chains $\mathrm{P}$, it seems that, in our study of the inference $\smash{\avriupprev{k}(f \vert x)}$, we can limit ourselves to that part of $\settrans$ and $\rimarkov{\settrans}$ that describes the process' dynamics for states in $\mathcal{R}$. 
Moreover, it can be shown (see Lemma~\ref{lemma: settrans_S is set of transition matrices} in \ref{Sect: appendix: repetition independence}) that the upper transition operator $\uptrans{}'$ and the graph $\mathscr{G}(\uptrans{}')$ associated with such a restricted version $\settrans'$ of $\settrans$, are itself versions of the original $\uptrans$ and $\mathscr{G}(\uptrans)$ restricted to the states in $\mathcal{R}$.
Since $\mathcal{R}$ is a strongly connected component in $\mathscr{G}(\uptrans{})$, the graph $\mathscr{G}(\uptrans{}')$ will therefore be strongly connected as a whole and we are led to believe, due to Proposition~\ref{prop: rep independence converges to epi irrelevance for strongly connected}, that the inference $\avriupprev{k}(f \vert x)$ will converge to a constant that does not depend on the state $x\in\mathcal{R}$.
So, one would expect that, if we limit ourselves to initial states in $\mathcal{R}$, the condition of (TCA) is sufficient for weakly ergodic behaviour.

In order to see why this can also be expected if we allow the process to start in $\mathcal{R}^c$, first consider the following straightforward lemma. 
\begin{lemma}\label{lemma: TCA then all T have absorbing maximal class}
If \/ $\uptrans{}$ satisfies (TCA) with top class~$\mathcal{R}$, then, for all $T\in\settrans{}$, the class $\mathcal{R}$ is an absorbing closed class in $\mathscr{G}(T)$.
\end{lemma}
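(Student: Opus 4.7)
My plan is to split the statement into its two assertions---closedness and the absorbing property---and handle each separately, leveraging the two key tools the paper has just made explicit: the preceding Lemma (which transfers closedness of a class in $\mathscr{G}(\uptrans{})$ to $\mathscr{G}(T)$ for any $T\in\settrans{}$), and the basic pointwise inequality $T^k h \leq \uptrans{}^k h$ for all $h\in\setofgambles{}(\statespace{})$ and all $k\in\natz{}$.

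For closedness: since $\uptrans{}$ satisfies (TCA), it has a top class $\mathcal{R}$, and the paper has already observed that the top class is the unique closed communication class in $\mathscr{G}(\uptrans{})$. Hence $\mathcal{R}$ is closed in $\mathscr{G}(\uptrans{})$, and applying the immediately preceding lemma yields that $\mathcal{R}$ is closed in $\mathscr{G}(T)$ for every $T\in\settrans{}$. No further work is needed for this half.

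For the absorbing property: I need to show that for every $T\in\settrans{}$ and every $x\in\mathcal{R}^c$, there exists $k\in\nats{}$ with $T^k\indica{\mathcal{R}^c}(x) < 1$. Fix such a $T$ and $x$. By (TCA) applied to $\uptrans{}$ with the closed class $\mathcal{R}$, there exists $k\in\nats{}$ such that $\uptrans{}^k \indica{\mathcal{R}^c}(x) < 1$. Applying the pointwise inequality $T^k\indica{\mathcal{R}^c}\leq \uptrans{}^k \indica{\mathcal{R}^c}$ (a direct consequence of the monotonicity of $T$ and $\uptrans{}$ together with the dominance $Tg \leq \uptrans{}g$ for $g\in\setofgambles{}(\statespace{})$, iterated $k$ times) then gives $T^k\indica{\mathcal{R}^c}(x) \leq \uptrans{}^k \indica{\mathcal{R}^c}(x) < 1$, which is exactly the required bound.

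I do not expect a genuine obstacle here: both halves are almost immediate once one recognises that the closed class $\mathcal{R}$ for $\uptrans{}$ plays the role of a closed class for every dominated $T$, and that (TCA) is defined in terms of a strict inequality that is stable under the dominance $T^k \leq \uptrans{}^k$. The only point worth stating carefully is that the $k$ witnessing the absorbing property may depend on both $x$ and $T$, which causes no issue since the definition of (TCA) (and of an absorbing class) only requires existence of some $k$ per starting state.
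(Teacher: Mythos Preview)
Your proof is correct and follows essentially the same route as the paper: closedness of $\mathcal{R}$ in $\mathscr{G}(T)$ is obtained from the preceding lemma, and the absorbing property is inherited via the pointwise dominance $T^k\indica{\mathcal{R}^c}\leq \uptrans{}^k\indica{\mathcal{R}^c}$ applied to the $k$ witnessing (TCA) for $\uptrans{}$. The only minor difference is that the paper uses a single $k_x$ (depending only on $x$, not on $T$) for all $T\in\settrans{}$, whereas you allow $k$ to depend on $T$ as well---but as you note, this is immaterial for the definition.
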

\begin{proof}
Since $\mathcal{R}$ is the top class in $\mathscr{G}(\uptrans{})$, it is closed in $\mathscr{G}(\uptrans{})$ [Lemma~\ref{lemma: if single communication class then top class}, \ref{Sect: app: accessibility relations}] and therefore, by Lemma~\ref{lemma: S is maximal in G(uptrans) then S is maximal in G(T)}, it is also closed in $\mathscr{G}(T)$ for all $T\in\settrans$.
So it remains to prove that $\mathcal{R}$ is moreover absorbing in $\mathscr{G}(T)$ for all $T\in\settrans$.
To do so, recall that $\mathcal{R}$ is an absorbing (and closed) class in $\mathscr{G}(\uptrans{})$, meaning that $\uptrans{}^{k_x} \indica{\mathcal{R}^c} (x) < 1$ for any $x\in\mathcal{R}^c$ and some $k_x\in\nats{}$.
Then, for any $T\in\settrans$, since $T^{k_x} \indica{\mathcal{R}^c} \leq \uptrans{}^{k_x} \indica{\mathcal{R}^c}$, we also have that $T^{k_x} \indica{\mathcal{R}^c} (x) < 1$ which implies that $\mathcal{R}$ is indeed an absorbing closed class in $\mathscr{G}(T)$.
\end{proof}

So if we consider any Markov chain $\mathrm{P}\in\rimarkov{\settrans}$ and let $T\in\settrans$ be the corresponding transition matrix, the class $\mathcal{R}$ is closed and absorbing in $\mathscr{G}(T)$.
We can then reason in a similar way as in the paragraph that preceded Proposition~\ref{proposition: if top class absorbing then weakly ergodic}; 
due to the fact that $\mathcal{R}$ is absorbing and closed, it is to be expected that, according to the model $\mathrm{P}$, the process is eventually in $\mathcal{R}$ with practical certainty.
Unlike before, however, $\mathcal{R}$ is not necessarily a communication class in $\mathscr{G}(T)$, so we cannot assert that $\mathrm{P}$ behaves in a weakly ergodic way for initial states in $\mathcal{R}$.
In other words, for any $x\in\mathcal{R}$, the expected time average $\prev{\mathrm{av},k}^\mathrm{P}(f \vert x)$ may not converge or, if it does converge, it may possibly depend on the initial state $x\in\mathcal{R}$.
Nonetheless, it can be inferred from our previous discussion and Lemma~\ref{lemma: if T_f has eigenvalue then repetition independence weakly ergodic} that the model $\mathrm{P}\in\rimarkov{\settrans}$ can be chosen in such a way that its behaviour for initial states $x\in\mathcal{R}$ is `approximately' weakly ergodic, in the sense that $\prev{\mathrm{av},k}^\mathrm{P}(f \vert x)$ will eventually lie at $\epsilon$-distance from a constant $\mu$ that does not depend on $x\in\mathcal{R}$.
Recalling that $\mathcal{R}$ is closed and absorbing---for all $\mathrm{P}\in\rimarkov{\settrans}$ and therefore, also for any specific choice of $\mathrm{P}$---the same can be expected for initial states $y\in\mathcal{R}^c$.
As a result, we would then get that $\mu -\epsilon \leq \avriupprev{k}(f \vert x)$ for any $x\in\statespace{}$ and for $k$ large enough.
Moreover, it can easily be shown that the constant $\mu$ is the limit value of the inference $\upprev[\mathrm{av},k](f \vert x)$ (see, for instance, Proposition~\ref{proposition: strongly connected then eigenvector}), so $\mu$ is also an upper bound for the limit values of $\avriupprev{k}(f \vert x)$.
Both observations taken together, one would expect that the inference $\avriupprev{k}(f \vert x)$ converges to the constant $\mu$ for all $x\in\statespace{}$. 
Our next result confirms this.

\begin{proposition}\label{Prop: TCA is sufficient for weak ergodicity of IMC under rep. ind.}
If\/ $\uptrans{}$ satisfies (TCA), then $\rimarkov{\settrans{}}$ is weakly ergodic and\/ $\smash{\avriupprev{\infty}(f) = \upprev[\mathrm{av},\infty](f)}$ for all $f\in\setofgambles{}(\statespace{})$.
\end{proposition}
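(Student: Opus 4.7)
The plan is to establish, for every $x \in \statespace{}$, the two-sided sandwich
$$\limsup_{k\to+\infty} \avriupprev{k}(f\vert x) \leq \upprev[\mathrm{av},\infty](f) \leq \liminf_{k\to+\infty} \avriupprev{k}(f\vert x),$$
which---together with Theorem~\ref{theorem: weakly ergodic iff top class absorbing}, guaranteeing existence of the middle term $\mu := \upprev[\mathrm{av},\infty](f)$---yields both weak ergodicity of $\rimarkov{\settrans{}}$ and the claimed equality. The upper bound is immediate from the inclusion $\rimarkov{\settrans{}} \subseteq \eimarkov{\settrans{}}$, which gives $\avriupprev{k}(f\vert x) \leq \upprev[\mathrm{av},k](f\vert x)$ for every $k$. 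The bulk of the argument is the lower bound: fixing $\epsilon > 0$, I would construct a single transition matrix $T \in \settrans{}$ with the property that $\liminf_{k} \prev{\mathrm{av},k}^T(f\vert x) \geq \mu - \epsilon$ for every $x \in \statespace{}$; by Equation~\eqref{Eq: recursive formula rep. independence} this suffices, and letting $\epsilon \downarrow 0$ completes the argument.

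The construction passes through a restriction to the top class. I would introduce $\settrans{}' := \{T|_\mathcal{R} : T \in \settrans{}\}$ (well defined and stochastic because $\mathcal{R}$ is closed in every $\mathscr{G}(T)$ by Lemma~\ref{lemma: S is maximal in G(uptrans) then S is maximal in G(T)}), check that $\settrans{}'$ inherits separate specification from $\settrans{}$ and that its upper transition operator $\uptrans{}'$ acts as the restriction of $\uptrans{}$ to functions on $\mathcal{R}$. Since $\mathcal{R}$ is a communication class, $\mathscr{G}(\uptrans{}')$ is strongly connected, so by Proposition~\ref{proposition: strongly connected then eigenvector} the topical map $\uptrans{}'_{f|_\mathcal{R}}$ has a unique eigenvalue; Proposition~\ref{proposition: if top class then eigenvector in top class} combined with weak ergodicity of $\uptrans{}$ for initial states in $\mathcal{R}$ pins this eigenvalue down as $\mu$. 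Lemma~\ref{lemma: if T_f has eigenvalue then repetition independence weakly ergodic} applied to $\settrans{}'$ then delivers some $T' \in \settrans{}'$ with $\mu - \epsilon \leq \liminf_{k} \tfrac{1}{k}[T^{\prime\, k}_{f|_\mathcal{R}}(0)](y)$ for every $y \in \mathcal{R}$, and separate specification of $\settrans{}$ lets me extend $T'$ to a matrix $T \in \settrans{}$ by choosing arbitrary rows from $\settrans{}[x]$ for $x \in \mathcal{R}^c$.

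For $x \in \mathcal{R}$ the desired lower bound follows at once, because closedness of $\mathcal{R}$ in $\mathscr{G}(T)$ forces $[T^i f](x) = [T^{\prime\, i}(f|_\mathcal{R})](x)$ for every $i$. To transfer the bound to $x \in \mathcal{R}^c$, I would invoke Lemma~\ref{lemma: TCA then all T have absorbing maximal class} to conclude that $\mathcal{R}$ is closed and absorbing in $\mathscr{G}(T)$; in the precise setting this rules out any other closed class, so every state of $\mathcal{R}^c$ is transient under $T$ and the hitting time $\tau$ of $\mathcal{R}$ has finite expectation. Splitting the Cesaro sum $\tfrac{1}{k}\sum_{i=0}^{k-1}[T^i f](x)$ into a pre-hitting part (uniformly bounded by $\supnorm{f}\,\prev{T}(\tau \vert X_1 = x)$, hence negligible after dividing by $k$) and a post-hitting part, which conditional on the entry state $y \in \mathcal{R}$ asymptotically matches the time average on $\mathcal{R}$ starting from $y$, yields $\liminf_{k} \prev{\mathrm{av},k}^T(f\vert x) \geq \mu - \epsilon$ after averaging over the entry state.

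The main obstacle is precisely this last decomposition step: interchanging the liminf with the expectation over the random hitting time and entry state requires care, typically via Fatou's lemma together with boundedness of $f$, and the post-hitting term must be controlled uniformly in $\tau$ so that the convergence coming from Lemma~\ref{lemma: if T_f has eigenvalue then repetition independence weakly ergodic} can be inserted under the expectation. Following the authors' stated convention for Section~\ref{sect: weak ergo in repetition independence}, I would defer this technical verification to an appendix.
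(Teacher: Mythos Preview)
Your proposal is correct and follows the paper's overall architecture closely: the upper bound comes from the inclusion $\rimarkov{\settrans{}}\subseteq\eimarkov{\settrans{}}$; for the lower bound you restrict to the top class, invoke Lemma~\ref{lemma: if T_f has eigenvalue then repetition independence weakly ergodic} on the restricted model to obtain a single $T\in\settrans{}$ that is $\epsilon$-optimal on $\mathcal{R}$, and then push the bound from $\mathcal{R}$ to $\mathcal{R}^c$ using that $\mathcal{R}$ is closed and absorbing in $\mathscr{G}(T)$ (Lemma~\ref{lemma: TCA then all T have absorbing maximal class}). The paper packages your restriction argument as Lemma~\ref{lemma: settrans_S is set of transition matrices} and Lemma~\ref{Lemma: if max commun class S then IMC_ri weakly ergodic for states in S}, so up to that point the two proofs coincide.

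The one genuine difference is the transfer step from $\mathcal{R}$ to $\mathcal{R}^c$. You propose a probabilistic decomposition at the hitting time of $\mathcal{R}$, followed by a Fatou-type interchange of $\liminf$ and expectation over the entry point. This is valid (finiteness of the state space gives geometric tails for the hitting time, and the integrands are bounded), but it introduces machinery the paper never sets up. The paper instead reuses Proposition~\ref{prop: if T is absorbing, then limit of m_f,k does not depend on S^c}, an operator-theoretic statement already proved for the epistemic-irrelevance case: since $\mathcal{R}$ is a closed absorbing class in $\mathscr{G}(T)$, that proposition applies directly with $T$ in place of $\uptrans{}$ and yields
\[
\min_{x\in\mathcal{R}}\liminf_{k\to+\infty}\tfrac{1}{k}[T_f^k(0)](x)\leq \liminf_{k\to+\infty}\tfrac{1}{k}[T_f^k(0)](y)\quad\text{for all }y\in\statespace{},
\]
which is exactly the transfer you need. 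This route avoids hitting times, the strong Markov property, and the Fatou interchange altogether, and costs nothing extra since Proposition~\ref{prop: if T is absorbing, then limit of m_f,k does not depend on S^c} was needed anyway for Proposition~\ref{proposition: if top class absorbing then weakly ergodic}. Your approach is the more classical one and would be natural in a paper that did not already have the operator-level machinery in place.
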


Conversely, consider any $\settrans$ such that the corresponding $\uptrans{}$ does not satisfy (TCA).
Then either $\mathscr{G}(\uptrans{})$ has no top class, in which case it must necessarily have two (or more) different closed communication classes $\mathcal{S}_1$ and $\mathcal{S}_2$, or $\mathscr{G}(\uptrans{})$ has a top class but it is not absorbing.
Consider the first case.
Since $\mathcal{S}_1$ and $\mathcal{S}_2$ are both closed in $\mathscr{G}(\uptrans{})$, they are also both closed (but not necessarily a communication class) in $\mathscr{G}(T)$ for any $T\in\settrans$, because of Lemma~\ref{lemma: S is maximal in G(uptrans) then S is maximal in G(T)}.
So, according to any Markov chain $\mathrm{P}\in\rimarkov{\settrans}$ with transition matrix $T$, the process surely remains in $\mathcal{S}_i$ with $i\in\{1,2\}$ once it has reached $\mathcal{S}_i$.
Hence, if we let $c_1,c_2 \in\reals$ such that $c_1 \not= c_2$ and assume that $f(x) = c_i$ for all $x\in\mathcal{S}_i$---which is possible because $\mathcal{S}_1$ and $\mathcal{S}_2$ are two different communication classes in $\mathscr{G}(\uptrans{})$ and are therefore disjoint---we would expect that $\prev{\mathrm{av},k}^\mathrm{P}(f \vert x)$ is simply equal to $c_i$ for all $k \in \nats{}$ and all $x\in\mathcal{S}_i$.
Since this is the case for all $\mathrm{P}\in\smash{\rimarkov{\settrans}}$, we are inclined to conclude that $\smash{\avriupprev{k}(f \vert x)} = c_i$ for all $k \in \nats{}$ and all $x\in\mathcal{S}_i$, and therefore that $\lim_{k\to+\infty} \smash{\avriupprev{k}(f \vert x_1)} = c_1 \not= c_2 = \lim_{k\to+\infty} \smash{\avriupprev{k}(f \vert x_2)}$ for any $x_1\in\mathcal{S}_1$ and any $x_2\in\mathcal{S}_2$.
This would preclude $\rimarkov{\settrans}$ from being weakly ergodic.
As a result, if we assume that $\rimarkov{\settrans}$ is weakly ergodic, then $\mathscr{G}(\uptrans{})$ must have a top class.

\begin{proposition}\label{Prop: if no top class then IMC under ri is not weakly ergodic}
If \/ $\rimarkov{\settrans}$ is weakly ergodic, then the graph $\mathscr{G}(\uptrans{})$ has a top class.
\end{proposition}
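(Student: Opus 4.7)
The plan is to prove the contrapositive: if $\mathscr{G}(\uptrans{})$ has no top class, then $\rimarkov{\settrans{}}$ is not weakly ergodic. This will largely formalise the informal argument that the authors have already sketched in the paragraph immediately preceding the statement.

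First I would extract the structural consequence of top-class failure. Since $\statespace{}$ (and hence the partition $\mathscr{C}$ of communication classes) is finite, at least one maximal element exists under the partial order $\to$ on $\mathscr{C}$, and by Lemma~\ref{lemma: if single communication class then top class} a top class exists precisely when this maximal element is unique. Failure of top-class existence therefore produces two distinct maximal communication classes $\mathcal{S}_1$ and $\mathcal{S}_2$, which by Lemma~\ref{lemma: inescapable is maximal for communication classes} are both closed in $\mathscr{G}(\uptrans{})$. Lemma~\ref{lemma: S is maximal in G(uptrans) then S is maximal in G(T)} then upgrades this: for every $T\in\settrans{}$, both $\mathcal{S}_1$ and $\mathcal{S}_2$ are also closed in $\mathscr{G}(T)$ (though not necessarily as communication classes there).

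Next I would choose the witness $f\coloneqq\indica{\mathcal{S}_1}$ and compute $\avriupprev{k}(f\vert x)$ for initial states in the two classes. Fix any $x\in\mathcal{S}_1$ and any $\mathrm{P}\in\rimarkov{\settrans{}}$ with transition matrix $T$. For every $y\in\mathcal{S}_1^c$ and every $i\in\natz{}$, the closedness of $\mathcal{S}_1$ in $\mathscr{G}(T)$ together with Lemma~\ref{lemma: directed path} (applied with $T$ in the role of $\uptrans$, which is legitimate since a transition matrix is itself an upper transition operator) yields $T^{i}\indica{y}(x)=0$; summing and using row-stochasticity of $T^{i}$ gives $T^{i}\indica{\mathcal{S}_1}(x)=1$ for every $i\in\natz{}$. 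Hence $\prev{\mathrm{av},k}^{\mathrm{P}}(f\vert x)=\tfrac{1}{k}\sum_{i=1}^{k}T^{i-1}\indica{\mathcal{S}_1}(x)=1$ for every $k\in\nats{}$, and taking the supremum over $\mathrm{P}\in\rimarkov{\settrans{}}$ via Equation~\eqref{Eq: recursive formula rep. independence} yields $\avriupprev{k}(f\vert x)=1$. The symmetric argument with $x\in\mathcal{S}_2$, using that $\mathcal{S}_1\cap\mathcal{S}_2=\emptyset$ (distinct communication classes are disjoint) so that $\indica{\mathcal{S}_1}$ is supported entirely in $\mathcal{S}_2^c$, gives $\avriupprev{k}(f\vert x)=0$ for all such $x$ and all $k\in\nats{}$.

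Both limits as $k\to+\infty$ therefore exist but take the different values $1$ and $0$ depending on whether the initial state lies in $\mathcal{S}_1$ or $\mathcal{S}_2$, contradicting weak ergodicity of $\rimarkov{\settrans{}}$. The only conceptually non-trivial ingredient is the transfer of closedness from $\mathscr{G}(\uptrans{})$ to $\mathscr{G}(T)$ for every $T\in\settrans{}$, which is already in hand via Lemma~\ref{lemma: S is maximal in G(uptrans) then S is maximal in G(T)}; once that is used, the witness construction together with Lemma~\ref{lemma: directed path} closes the argument.
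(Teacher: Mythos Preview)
Your proof is correct and follows essentially the same contrapositive strategy as the paper: obtain two disjoint closed communication classes, transfer their closedness to each $\mathscr{G}(T)$ via Lemma~\ref{lemma: S is maximal in G(uptrans) then S is maximal in G(T)}, and exhibit a witness $f$ whose upper expected time averages differ on the two classes. The only cosmetic differences are that the paper uses $f=c_1\indica{\mathcal{S}_1}+c_2\indica{\mathcal{S}_2}$ and invokes Lemma~\ref{lemma: average does not depend on f outside S} to reduce the computation to a constant, whereas you take $f=\indica{\mathcal{S}_1}$ and compute $T_f^k(0)=\sum_{i=0}^{k-1}T^i f$ directly via Lemma~\ref{lemma: directed path} and row-stochasticity; both routes are equally valid.
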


Finally, consider the case that $\mathscr{G}(\uptrans{})$ has a top class $\mathcal{R}$ that is not absorbing and recall the discussion that lead to Proposition~\ref{prop: TCA if weak ergodicity}.
There, we relied on the fact that, since $\mathcal{R}$ is not absorbing, there is some precise model $\mathrm{P}\in\eimarkov{\settrans}$ for which the process is guaranteed to remain in $\mathcal{R}^c$ given that it started in some state $x\in\mathcal{R}^c$.\footnote{Once more, this is only valid under the assumption that $\settrans$ is closed.
The formal proof of Proposition~\ref{Prop: if not absorbing then IMC under ri is not weakly ergodic}, however, does not rely on $\settrans$ being closed.}
As a matter of fact, this compatible model $\mathrm{P}$ can always be chosen in such a way that it is a homogeneous Markov chain, and therefore such that $\mathrm{P}\in\rimarkov{\settrans}$.
Hence, if we again let $f$ be the indicator $\indica{\mathcal{R}^c}$, then we would obtain that $\smash{\prev{\mathrm{av},k}^{\mathrm{P}}(f \vert x)} = 1$.
Since no other $\mathrm{P}'\in\smash{\rimarkov{\settrans}}$ can then yield a higher expected time average for this $f$, we would also have that $\smash{\avriupprev{k}(f \vert x)} = 1$ for all $k\in\nats$.
On the other hand, since $\mathcal{R}$ is closed and $f(y) = 0$ for any $y\in\mathcal{R}$, it is to be expected---due to the same reasons as we have come to explain in the paragraph above, where both $\mathcal{S}_1$ and $\mathcal{S}_2$ were closed---that $\avriupprev{k}(f \vert y) = 0$ for all $k \in \nats{}$.
So in conclusion, we would have that $\lim_{k\to+\infty}\smash{\avriupprev{k}(f \vert x)}=1\neq 0=\lim_{k\to+\infty}\smash{\avriupprev{k}(f \vert y)}$, again precluding $\rimarkov{\settrans}$ from being weakly ergodic.
\vspace*{2pt}

\begin{proposition}\label{Prop: if not absorbing then IMC under ri is not weakly ergodic}
If \/ $\rimarkov{\settrans}$ is weakly ergodic and the graph $\mathscr{G}(\uptrans{})$ has a top class, then $\uptrans{}$ satisfies (TCA).
\end{proposition}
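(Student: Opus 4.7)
The plan is to prove the contrapositive: if $\mathscr{G}(\uptrans{})$ has a top class $\mathcal{R}$ that is not absorbing, then $\rimarkov{\settrans{}}$ fails to be weakly ergodic. Failure of \ref{property: absorbing} produces some $x_0\in\mathcal{R}^c$ with $\uptrans{}^k\indica{\mathcal{R}^c}(x_0)=1$ for every $k\in\nats{}$. I introduce the auxiliary set $\mathcal{A}\coloneqq\{z\in\statespace{}\colon\uptrans{}^k\indica{\mathcal{R}^c}(z)=1\text{ for all }k\in\nats{}\}$, which is non-empty ($x_0\in\mathcal{A}$) and contained in $\mathcal{R}^c$, and I take $f\coloneqq\indica{\mathcal{R}^c}$. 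The target is to show that $\avriupprev{k}(f\vert y)=0$ for every $y\in\mathcal{R}$ while $\lim_{k\to+\infty}\avriupprev{k}(f\vert x_0)=1$, which will preclude weak ergodicity.

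For $y\in\mathcal{R}$, Lemma~\ref{lemma: S is maximal in G(uptrans) then S is maximal in G(T)} transfers closedness of $\mathcal{R}$ from $\mathscr{G}(\uptrans{})$ to every $\mathscr{G}(T)$ with $T\in\settrans{}$, so Lemma~\ref{lemma: directed path} together with the linearity of $T$ forces $T^i\indica{\mathcal{R}^c}(y)=0$ for every $i\in\natz{}$ and every $T\in\settrans{}$, and Equation~\eqref{Eq: recursive formula rep. independence} then yields $\avriupprev{k}(f\vert y)=0$ for all $k\in\nats{}$.

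For $x_0$, I first establish the structural identity $\uptrans{}\indica{\mathcal{A}}(z)=1$ for every $z\in\mathcal{A}$. Using closedness of $\mathcal{R}$ together with \ref{transcoherence: monotonicity}, a short induction shows that $k\mapsto\uptrans{}^k\indica{\mathcal{R}^c}$ is pointwise non-increasing; since $\mathcal{R}^c\setminus\mathcal{A}$ is finite, this yields common constants $K\in\nats{}$ and $\delta>0$ with $\uptrans{}^K\indica{\mathcal{R}^c}(w)\leq1-\delta$ for every $w\in\mathcal{R}^c\setminus\mathcal{A}$. Combined with $\uptrans{}^K\indica{\mathcal{R}^c}=0$ on $\mathcal{R}$ and $\uptrans{}^K\indica{\mathcal{R}^c}=1$ on $\mathcal{A}$, this gives the pointwise bound $\uptrans{}^K\indica{\mathcal{R}^c}\leq(1-\delta)\indica{\mathcal{R}^c}+\delta\indica{\mathcal{A}}$. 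Applying $\uptrans{}$ at $z\in\mathcal{A}$ and invoking \ref{transcoherence: monotonicity}, \ref{transcoherence: subadditivity} and \ref{transcoherence: homogeneity} on the right-hand side delivers $1=\uptrans{}^{K+1}\indica{\mathcal{R}^c}(z)\leq(1-\delta)+\delta\uptrans{}\indica{\mathcal{A}}(z)$, forcing $\uptrans{}\indica{\mathcal{A}}(z)\geq1$; the reverse bound is \ref{transcoherence: bounds}.

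The last step exploits \ref{sep specified} with a $k$-dependent tolerance. Fix $\epsilon>0$; for every $k\in\nats{}$, pick $T_k\in\settrans{}$ with $T_k\indica{\mathcal{A}}\geq\uptrans{}\indica{\mathcal{A}}-\epsilon/k$, which by the structural identity implies $T_k\indica{\mathcal{A}}\geq(1-\epsilon/k)\indica{\mathcal{A}}$ pointwise. Iterating this inequality via the monotonicity and linearity of $T_k$ yields $(T_k)^i\indica{\mathcal{A}}\geq(1-\epsilon/k)^i\indica{\mathcal{A}}$, so $(T_k)^i f(x_0)\geq(T_k)^i\indica{\mathcal{A}}(x_0)\geq(1-\epsilon/k)^i$ for every $i\in\natz{}$, where we used $\indica{\mathcal{R}^c}\geq\indica{\mathcal{A}}$ and monotonicity of $T_k$. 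Plugging this into Equation~\eqref{Eq: recursive formula rep. independence} gives
\begin{equation*}
\avriupprev{k}(f\vert x_0)\geq\tfrac{1}{k}\sum_{i=0}^{k-1}(1-\epsilon/k)^i\geq(1-\epsilon/k)^{k-1}\to e^{-\epsilon}\quad\text{as }k\to+\infty,
\end{equation*}
and letting $\epsilon\downarrow0$, together with the trivial bound $\avriupprev{k}(f\vert x_0)\leq1$, forces $\lim_{k}\avriupprev{k}(f\vert x_0)=1$. The main obstacle is that $\settrans{}$ is not assumed closed, so no single $T\in\settrans{}$ literally traps the process in $\mathcal{A}$; pairing the structural identity $\uptrans{}\indica{\mathcal{A}}(z)=1$ on $\mathcal{A}$ with the shrinking tolerance $\epsilon/k$ in \ref{sep specified} is precisely what keeps the geometric leak $(1-\epsilon/k)^k$ bounded away from zero in the limit.
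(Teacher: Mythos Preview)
Your proof is correct and follows the same overall strategy as the paper: prove the contrapositive, isolate the set $\mathcal{A}=\{z:\uptrans{}^k\indica{\mathcal{R}^c}(z)=1\text{ for all }k\}$, establish $\uptrans{}\indica{\mathcal{A}}=1$ on $\mathcal{A}$, and then use \ref{sep specified} to produce compatible transition matrices that nearly trap the chain in $\mathcal{A}$. Your $\mathcal{A}$ is exactly the set $A$ of the paper's Lemma~\ref{lemma: not TCA then TI_A equals one}, and your derivation of the structural identity reproduces that lemma's proof.

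The only substantive difference is in how the $\epsilon$-approximation from \ref{sep specified} is discharged. The paper picks a single $T\in\settrans$ with $T\indica{A}\geq\indica{A}-\epsilon$ (fixed $\epsilon$), obtains the additive bound $T_{\indica{A}}^k(0)\geq k\indica{A}-\tfrac{k(k-1)}{2}\epsilon$, and then, for each fixed $k$, lets $\epsilon\to0$; this yields $\avriupprev{k}(\indica{A}\vert x)=1$ for \emph{every} $k$, not merely in the limit. You instead choose $T_k$ with a $k$-dependent tolerance $\epsilon/k$, obtain the multiplicative bound $(T_k)^i\indica{\mathcal{A}}\geq(1-\epsilon/k)^i\indica{\mathcal{A}}$, and take $k\to\infty$ before $\epsilon\downarrow0$. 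Both orders of limits are valid; the paper's variant is slightly cleaner (no geometric estimate needed, and it gives the stronger conclusion that the upper expected time average equals $1$ at every finite horizon), while yours makes explicit why the leakage stays controlled despite $\settrans$ not being closed. Your choice $f=\indica{\mathcal{R}^c}$ versus the paper's $f=\indica{A}$ is immaterial, since only $f\geq\indica{\mathcal{A}}$ and $f\indica{\mathcal{R}}=0$ are used.
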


It now only remains to combine Propositions~\ref{Prop: TCA is sufficient for weak ergodicity of IMC under rep. ind.}, \ref{Prop: if no top class then IMC under ri is not weakly ergodic} and~\ref{Prop: if not absorbing then IMC under ri is not weakly ergodic} to establish our second main result.
\begin{theorem}\label{theorem: IMC under rep indep. weakly ergodic iff TCA}
An imprecise Markov chain under repetition independence $\rimarkov{\settrans{}}$ is weakly ergodic if and only if the upper transition operator $\uptrans{}$ corresponding with the separately specified set $\settrans{}$ satisfies (TCA).
Furthermore, in that case, we have that\/ $\smash{\avriupprev{\infty}(f) = \upprev[\mathrm{av},\infty](f)}$ for all $f\in\setofgambles{}(\statespace{})$. 
\end{theorem}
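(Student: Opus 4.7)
My plan is to obtain the theorem as an immediate consequence of the three propositions already established in this section, which together cover both directions of the biconditional and also pin down the value of the limit.

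For the ``if'' direction, I would simply invoke Proposition~\ref{Prop: TCA is sufficient for weak ergodicity of IMC under rep. ind.}: assuming that $\uptrans{}$ satisfies (TCA), it provides at once that $\rimarkov{\settrans{}}$ is weakly ergodic and that $\avriupprev{\infty}(f) = \upprev[\mathrm{av},\infty](f)$ for every $f\in\setofgambles{}(\statespace{})$. This half of the proof therefore requires no additional argument; the real work is buried in Proposition~\ref{Prop: TCA is sufficient for weak ergodicity of IMC under rep. ind.} itself, which in turn leans on Lemma~\ref{lemma: TCA then all T have absorbing maximal class}, on the approximate-eigenvector construction of Lemma~\ref{lemma: if T_f has eigenvalue then repetition independence weakly ergodic}, and on the strongly-connected case handled by Proposition~\ref{prop: rep independence converges to epi irrelevance for strongly connected}.

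For the ``only if'' direction, I would argue in two steps. Suppose that $\rimarkov{\settrans{}}$ is weakly ergodic. First, Proposition~\ref{Prop: if no top class then IMC under ri is not weakly ergodic} forces $\mathscr{G}(\uptrans{})$ to possess a top class. With the existence of a top class secured, I can then apply Proposition~\ref{Prop: if not absorbing then IMC under ri is not weakly ergodic}, which upgrades this top class to being absorbing, i.e.\ shows that $\uptrans{}$ satisfies (TCA). The chaining of the two propositions in this order is what delivers the necessity direction.

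Since the three propositions carry all the mathematical content, the only genuine ``obstacle'' is organisational: the second step of the necessity direction is only legitimate once the first step has produced a top class, so the ordering must be respected. Apart from this minor bookkeeping point the proof is essentially immediate, and the identification of the limit value $\avriupprev{\infty}(f) = \upprev[\mathrm{av},\infty](f)$ drops out of the sufficiency step at no extra cost.
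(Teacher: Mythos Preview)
Your proposal is correct and mirrors the paper's own proof essentially verbatim: sufficiency (and the identity $\avriupprev{\infty}(f) = \upprev[\mathrm{av},\infty](f)$) is drawn from Proposition~\ref{Prop: TCA is sufficient for weak ergodicity of IMC under rep. ind.}, while necessity is obtained by chaining Proposition~\ref{Prop: if no top class then IMC under ri is not weakly ergodic} and then Proposition~\ref{Prop: if not absorbing then IMC under ri is not weakly ergodic}, in that order.
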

\begin{proof}
Sufficiency follows from Proposition~\ref{Prop: TCA is sufficient for weak ergodicity of IMC under rep. ind.}; necessity follows from Proposition~\ref{Prop: if no top class then IMC under ri is not weakly ergodic} and Proposition~\ref{Prop: if not absorbing then IMC under ri is not weakly ergodic}.
The last statement---that $\smash{\avriupprev{\infty}(f) = \upprev[\mathrm{av},\infty](f)}$ for all $f\in\setofgambles{}(\statespace{})$ in the case that $\uptrans{}$ satisfies (TCA)---also follows from Proposition~\ref{Prop: TCA is sufficient for weak ergodicity of IMC under rep. ind.}.
\end{proof}

\section{Conclusion}

The most important conclusion of our study of upper and lower expected time averages is the following (see Theorems~\ref{theorem: weakly ergodic iff top class absorbing} and~\ref{theorem: IMC under rep indep. weakly ergodic iff TCA}):
The condition of being top class absorbing is necessary and sufficient for weakly ergodic behaviour of an imprecise Markov chain, irrespectively of the imposed independence assumption.
In that case, upper (and lower) expected time averages converge to limit values that are constant, not only for all possible initial states (or distributions) of the process, but also for all possible types of independence assumptions.

Now, if we compare our notion of weak ergodicity with that of conventional ergodicity---which guarantees the existence of a limit upper and lower expectation---we believe that weak ergodicity, and the associated (limits of) upper and lower expected time averages, should become the new objects of interest when it comes to characterising long-term average behaviour.
Our conviction is based on the following three arguments:
\begin{enumerate}[leftmargin=*,ref={(\roman*)},label={\roman*.},itemsep=0pt]

\item
Weak ergodicity requires less stringent conditions to be satisfied than conventional ergodicity, which additionally requires top class regularity. 
We illustrated this difference in Example~\ref{example 1}, where we considered a(n imprecise) Markov chain that satisfies (TCA) but not (TCR).

\item
The inferences $\upprev[\mathrm{av},\infty](f)$ are able to provide us with more information about how time averages might behave, compared to limit expectations $\upprev[\infty](f)$.
To see why, recall Example~\ref{example 2}, where the inference $\upprev[\mathrm{av},\infty](\indica{b}) = 1/2$ significantly differed from $\upprev[\infty](\indica{b}) = 1$.
Clearly, the former was more representative for the limit behaviour of the time average of $\indica{b}$. 
As a consequence of \cite[Lemma~57]{8535240}, a similar statement holds for general functions.
In particular, it implies that $\upprev[\mathrm{av},\infty](f) \leq \upprev[\infty](f)$ for any function $f \in \setofgambles{}(\statespace{})$.
Since both inferences are upper bounds, $\upprev[\mathrm{av},\infty](f)$ is therefore at least as (and sometimes much more) informative as $\upprev[\infty](f)$.

\item
The characterisation of weak ergodicity, as well as the limit values $\upprev[\mathrm{av},\infty](f)$---or, equivalently, $\avriupprev{\infty}(f)$---of upper (and lower) expected time averages, do not depend on the type of independence assumption that we impose on the imprecise Markov chain.
As we have illustrated in Example~\ref{example 2}, conventional ergodicity does not exhibit this kind of robustness.
We perceive this as an advantage in favor of weak ergodicity.
On the one hand, it provides a clear practical benefit because one should not spend time and/or money in searching for the appropriate independence assumption; it simply does not matter.
On the other hand, it also opens doors for further theoretical research on this topic, because the limit values $\upprev[\mathrm{av},\infty](f)$ are now approximated by two different objects: the upper expectations $\upprev[\mathrm{av},k](f \vert x)$ and the upper expectations $\avriupprev{k}(f \vert x)$.
It is, for instance, not unreasonable to think that this feature might very well be a crucial step in developing efficient algorithms for the computation of $\upprev[\mathrm{av},\infty](f)$.

\end{enumerate}

That said, there is also one important feature that limit upper and lower expectations have, but that we did not consider yet for upper and lower expected time averages: an (imprecise) point-wise ergodic theorem~\cite[Theorem~32]{DECOOMAN201618}. For the limit upper and lower expectations of an ergodic imprecise Markov chain, this result states that
\begin{align*}
\smash{\lowprev{\infty}(f)} 
\leq \liminf_{k \to +\infty} f_{\mathrm{av}} (X_{1:k})
\leq \limsup_{k \to +\infty} f_{\mathrm{av}} (X_{1:k})
\leq \smash{\upprev[\infty](f)},
\end{align*} 
with lower probability one. 
In order for limit upper and lower expected time averages to be the undisputed quantities of interest when studying long-term time averages, a similar result would need to be obtained for weak ergodicity, where the role of $\upprev[\infty](f)$ and $\lowprev{\infty}(f) \coloneqq - \upprev[\infty](- f)$ is taken over by $\upprev[\mathrm{av},\infty](f)$ and $\smash{\lowprev{\mathrm{av},\infty}(f)} \coloneqq \smash{- \upprev[\mathrm{av},\infty](- f)}$, respectively.
If such a result would hold, it would provide us with (strictly almost sure) bounds on the limit values attained by time averages that are not only more informative than the current ones, but also guaranteed to exist under weaker conditions and equal for all types of independence assumptions.
In fact, we are happy to report that we already established such a result.
However, the proof necessitates a mathematical framework that strongly differs from the one used here, and we therefore intend to present it in future work.

Another topic that we would like to consider in the future, is the convergence of the inferences $\upprev[\mathrm{av},k](f \vert x)$ and $\avriupprev{k}(f\vert x)$ in general, without imposing that their limit values should be constant for all states $x$ in $\statespace{}$. 
We suspect that this kind of convergence will require no conditions at all.

\section*{Acknowledgments}
The research of Jasper De Bock was partially funded by the European Commission's H2020 programme, through the UTOPIAE Marie Curie Innovative Training Network, H2020-MSCA-ITN-2016, Grant Agreement number 722734, as well as by FWO (Research Foundation - Flanders),
through project number 3GO28919, entitled ``Efficient inference in large-scale queueing models using imprecise continuous-time Markov chains''.

\bibliographystyle{plain}
\bibliography{references-weak-ergodicity}


\appendix

\section{Accessibility relations}\label{Sect: app: accessibility relations}


\begin{lemma}\label{lemma: -> induces partial order}
The relation $\rightarrow$ induces a partial order on the partition $\mathscr{C}$ of all communication classes.
\end{lemma}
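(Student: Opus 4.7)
The plan is to verify the three defining properties of a partial order—reflexivity, transitivity, and antisymmetry—for the extended relation $\rightarrow$ on $\mathscr{C}$. Before doing so, I would spend a few lines confirming that the extension is well-defined, i.e.\ that the clause ``$x\to y$ for at least one $x\in A, y\in B$'' is equivalent to ``$x\to y$ for all $x\in A, y\in B$''. This is the small technical point on which everything else rests: given any $x,x'\in A$ and $y,y'\in B$, since $A$ and $B$ are communication classes we have $x'\to x$ and $y\to y'$, so if $x\to y$ then by transitivity of $\rightarrow$ on states (which was noted in the excerpt as being a preorder) we get $x'\to y'$.

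Reflexivity is then immediate: for any $A\in\mathscr{C}$, pick any $x\in A$; since $\rightarrow$ is reflexive on states, $x\to x$, so $A\to A$. Transitivity is equally direct: if $A\to B$ and $B\to C$, choose any $x\in A$, $y\in B$, $z\in C$; well-definedness gives $x\to y$ and $y\to z$, whence transitivity of $\rightarrow$ on states yields $x\to z$, and therefore $A\to C$.

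The only step with genuine content is antisymmetry, and even it reduces to unpacking definitions. Suppose $A\to B$ and $B\to A$ for $A,B\in\mathscr{C}$. Fix $x\in A$ and $y\in B$; by well-definedness we get both $x\to y$ and $y\to x$, so $x\leftrightarrow y$ by definition of $\leftrightarrow$. But $\leftrightarrow$ is the equivalence relation whose equivalence classes are precisely the elements of $\mathscr{C}$, so $x$ and $y$ lie in the same communication class, forcing $A=B$.

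The main obstacle, if one can call it that, is really only bookkeeping: making sure the well-definedness argument is stated cleanly, because reflexivity, transitivity, and antisymmetry all implicitly invoke it when moving between representatives of a class. Once that observation is in place, no inequality, no graph-theoretic construction, and no appeal to the upper transition operator is required—everything follows from the preorder structure of $\rightarrow$ on $\statespace{}$ and the definition of $\leftrightarrow$.
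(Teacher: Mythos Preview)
Your proposal is correct and follows essentially the same approach as the paper: both verify reflexivity and transitivity directly from the preorder on states, and both derive antisymmetry from the fact that $A\to B$ and $B\to A$ force all vertices in $A\cup B$ to communicate, hence $A=B$ since $\mathscr{C}$ is a partition. Your explicit well-definedness discussion is a welcome clarification of something the paper states parenthetically when defining the extended relation.
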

\begin{proof}
That $\rightarrow$ is reflexive and transitive on $\mathscr{C}$ follows immediately from the reflexivity and transitivity of the relation $\to$ on the singletons.
To see that it is also antisymmetric, consider any two sets $A$ and $B$ in $\mathscr{C}$.
Then if $A \to B$ and $B \to A$, it should be clear that any two vertices in $A \cup B$ communicate and therefore that $A \cup B$ is a communication class.
Since $A$ and $B$ are two sets in the partition $\mathscr{C}$ of communication classes, $A \cup B$ can only be a communication class as well if $A=B$.
Hence, the relation $\to$ induces a partial order on the partition $\mathscr{C}$ of all communication classes.
\end{proof}

The following lemma establishes our claim about the equivalence---for communication classes---between the notions of maximality and closedness.

\begin{lemma}\label{lemma: inescapable is maximal for communication classes}
Consider any communication class $\mathcal{S}\in\mathscr{C}$.
Then $\mathcal{S}$ is closed if and only if\/ $\mathcal{S}$ is maximal.
\end{lemma}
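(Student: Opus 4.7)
The plan is to establish both implications by contraposition, each amounting to a short unpacking of definitions. The key bridge between the two notions of accessibility is this: for communication classes $\mathcal{S},\mathcal{T}\in\mathscr{C}$, the class-level relation $\mathcal{S}\to\mathcal{T}$ holds if and only if there exist $x\in\mathcal{S}$ and $y\in\mathcal{T}$ with $x\to y$, and moreover, as soon as this holds for some such pair, it holds for every pair (because $\mathcal{S}$ and $\mathcal{T}$ are themselves communication classes, so one can always route through representatives). Additionally, since $\mathscr{C}$ is a partition, $\mathcal{T}\neq\mathcal{S}$ is equivalent to $\mathcal{T}\cap\mathcal{S}=\emptyset$, i.e.\ $\mathcal{T}\subseteq\mathcal{S}^c$.

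For the direction \emph{closed $\Rightarrow$ maximal}, I would proceed by contraposition. Assume $\mathcal{S}$ is not maximal in $(\mathscr{C},\to)$; then by Lemma~\ref{lemma: -> induces partial order} there exists some $\mathcal{T}\in\mathscr{C}$ with $\mathcal{T}\neq\mathcal{S}$ and $\mathcal{S}\to\mathcal{T}$. Picking any $x\in\mathcal{S}$ and any $y\in\mathcal{T}$, the key observation above gives $x\to y$, while disjointness of $\mathcal{S}$ and $\mathcal{T}$ forces $y\in\mathcal{S}^c$. This directly contradicts the definition of closedness of $\mathcal{S}$.

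For the direction \emph{maximal $\Rightarrow$ closed}, again by contraposition: assume $\mathcal{S}$ is not closed, so there exist $x\in\mathcal{S}$ and $y\in\mathcal{S}^c$ with $x\to y$. Let $\mathcal{T}$ denote the communication class of $y$, which exists since $\mathscr{C}$ partitions $\statespace{}$. Since $y\notin\mathcal{S}$ we have $\mathcal{T}\neq\mathcal{S}$, and the vertex-level relation $x\to y$ lifts to the class-level relation $\mathcal{S}\to\mathcal{T}$. Hence $\mathcal{S}$ is dominated by some strictly larger class in the partial order, contradicting maximality.

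There is no real obstacle here beyond bookkeeping: the entire argument is a careful translation between the vertex-level $\to$ and the class-level $\to$, combined with the fact that communication classes in $\mathscr{C}$ are pairwise disjoint. The most delicate point is simply being consistent about when "maximal" is being invoked in the strict sense (no properly dominating class other than $\mathcal{S}$ itself, which always dominates itself by reflexivity), and ensuring the class-level lifting step is justified by the communication-class structure rather than ad hoc.
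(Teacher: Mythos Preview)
Your proposal is correct and follows essentially the same approach as the paper's own proof: both arguments hinge on the partition structure of $\mathscr{C}$ and the translation between vertex-level and class-level accessibility. The only difference is cosmetic---you argue both directions by contraposition, whereas the paper proves each direction directly---so the logical content is identical.
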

\begin{proof}
Suppose that $\mathcal{S}$ is closed; so $x\not\to y$ for all $x\in\mathcal{S}$ and all $y\in\mathcal{S}^c$.
Consider any $\mathcal{S}'\in\mathscr{C}$ such that $\mathcal{S}'\not=\mathcal{S}$, and note that $\mathcal{S}\cap\mathcal{S}' = \emptyset$ because $\mathscr{C}$ is a partition.
Then, since $\mathcal{S}$ is closed, there is no $x\in\mathcal{S}$ and $y\in\mathcal{S}'$ such that $x\to y$, which implies that $\mathcal{S}'$ is not accessible from (or does not dominate) $\mathcal{S}$.
Because this is the case for all $\mathcal{S}'\in\mathscr{C}$ such that $\mathcal{S}'\not=\mathcal{S}$, we conclude that $\mathcal{S}$ is maximal.
Conversely, suppose that $\mathcal{S}$ is maximal, and consider any $x\in\mathcal{S}$ and any $y\in\mathcal{S}^c$.
Let $\mathcal{S}'\in\mathscr{C}$ be such that $y\in\mathcal{S}'$; there is exactly one such $\mathcal{S}'$ because $\mathscr{C}$ forms a partition of $\statespace{}$.
Moreover, $\mathcal{S}'\not=\mathcal{S}$ because $y\in\mathcal{S}'\cap\mathcal{S}^c$.
Then, since $\mathcal{S}$ is maximal, we have that $\mathcal{S}\not\to\mathcal{S}'$ and therefore, in particular, $x\not\to y$.
This is true for all $x\in\mathcal{S}$ and all $y\in\mathcal{S}^c$, so we conclude that $\mathcal{S}$ is closed.
\end{proof}

The following result is well-known in order theory. However, since we could not immediately find an appropriate reference, we have chosen to provide a proof of our own.
\begin{lemma}\label{lemma: if single communication class then top class}
A communication class $\mathcal{S}$ is the top class if and only if\/ $\mathcal{S}$ is the only maximal---or, equivalently, closed---communication class.
\end{lemma}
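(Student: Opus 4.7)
The plan is to exploit that the partition $\mathscr{C}$ of communication classes is finite (because $\statespace{}$ is) and that, by Lemma~\ref{lemma: -> induces partial order}, $(\mathscr{C},\to)$ is a partial order with antisymmetry. The equivalence between ``maximal'' and ``closed'' in the statement is already settled by Lemma~\ref{lemma: inescapable is maximal for communication classes}, so I will work solely with ``maximal'' throughout.

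For the forward direction, suppose $\mathcal{S}$ is the top class, so $\mathcal{S}' \to \mathcal{S}$ for every $\mathcal{S}' \in \mathscr{C}$. I would first check that $\mathcal{S}$ itself is maximal: if $\mathcal{S} \to \mathcal{S}'$ for some $\mathcal{S}' \in \mathscr{C}$, then combining with $\mathcal{S}' \to \mathcal{S}$ and invoking antisymmetry gives $\mathcal{S}' = \mathcal{S}$. To establish uniqueness, suppose toward a contradiction that $\mathcal{S}'' \in \mathscr{C}$ is maximal with $\mathcal{S}'' \neq \mathcal{S}$. Since $\mathcal{S}$ is the top class, $\mathcal{S}'' \to \mathcal{S}$, and then the maximality of $\mathcal{S}''$ (no strictly larger class exists) combined with antisymmetry again forces $\mathcal{S}'' = \mathcal{S}$, a contradiction.

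For the reverse direction, assume $\mathcal{S}$ is the unique maximal element of $(\mathscr{C},\to)$, and fix any $\mathcal{S}' \in \mathscr{C}$. I would show $\mathcal{S}' \to \mathcal{S}$ by a standard chain-climbing argument: starting from $\mathcal{S}_0 \coloneqq \mathcal{S}'$, whenever $\mathcal{S}_i$ is not maximal we pick some $\mathcal{S}_{i+1} \in \mathscr{C}$ with $\mathcal{S}_i \to \mathcal{S}_{i+1}$ and $\mathcal{S}_{i+1} \neq \mathcal{S}_i$. Antisymmetry guarantees the chain consists of distinct classes, and since $\mathscr{C}$ is finite the process terminates at some maximal $\mathcal{S}_n$. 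By hypothesis $\mathcal{S}_n = \mathcal{S}$, and transitivity yields $\mathcal{S}' \to \mathcal{S}$. Because $\mathcal{S}'$ was arbitrary, $\mathcal{S}$ is the top class.

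The only step that requires any care is the chain-climbing in the backward direction, where I must ensure distinctness of the chain in order to apply finiteness; this is where antisymmetry of the partial order on $\mathscr{C}$ becomes essential. Once that is in hand, both implications reduce to short arguments combining antisymmetry and the defining property of the top class.
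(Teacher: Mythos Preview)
Your proposal is correct and follows essentially the same approach as the paper: both directions use antisymmetry of the partial order $(\mathscr{C},\to)$, and the reverse direction is the same finite chain-climbing argument, terminating at a maximal element which must be $\mathcal{S}$. The paper merely spells out the chain-extension step and the distinctness argument in slightly more detail, but the underlying logic is identical.
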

\begin{proof}
First note that maximality and closedness can indeed be used interchangeably here; see~Lemma~\ref{lemma: inescapable is maximal for communication classes}.
To see that the direct implication holds, suppose that $\mathcal{S}$ is the top class.
Then observe that $\mathcal{S}$ is maximal because otherwise there would be some $C\in\mathscr{C}\setminus\{\mathcal{S}\}$ such that $\mathcal{S} \to C$ and (since $\mathcal{S}$ is the top class) $C \to \mathcal{S}$, and therefore, by the antisymmetry of $\to$, that $\mathcal{S}=C$, contradicting that $C\in\mathscr{C}\setminus\{\mathcal{S}\}$.
It is also the only maximal communication class because each $C\in\mathscr{C}$ is dominated by $\mathcal{S}$.

Now suppose that $\mathcal{S}$ is the only maximal communication class.
To prove the converse implication, we will rely on the following observation: for any finite sequence $C_1,\cdots,C_n$ of different communication classes such that $C_i \to C_j$ and $C_i\not=\mathcal{S}$ for all $i,j\in\{1,\cdots,n\}$ such that $i\leq j$, 
there is a $C_{n+1}\in\mathscr{C}$ such that $C_i \not= C_{n+1}$ and $C_i \to C_{n+1}$ for all $i\in\{1,\cdots,n\}$.
Indeed, $C_n \not=\mathcal{S}$, which implies that $C_n$ is not maximal---$\mathcal{S}$ is the only maximal communication class---and therefore that there is some $C_{n+1}\in\mathscr{C}$ such that $C_n \to C_{n+1}$ (and obviously $C_n \not= C_{n+1}$).
Since $C_i \to C_n$ for any $i\in\{1,\cdots,n\}$, the transitivity of $\to$ implies that also $C_i \to C_{n+1}$.
Moreover, $C_{n+1}$ differs from any $C_i$ with $i\in\{1,\cdots,n-1\}$, because we would otherwise have that $C_i\to C_n$ and $C_n \to C_{n+1} = C_i$, and therefore, by the antisymmetry of $\to$, that $C_i = C_n$.
This would contradict our assumptions about $C_1,\cdots,C_n$ and since we already established that $C_n \not= C_{n+1}$, we indeed conclude that $C_i \not= C_{n+1}$ and $C_i \to C_{n+1}$ for all $i\in\{1,\cdots,n\}$.

Now fix any $C_1$ such that $C_1 \not= \mathcal{S}$.
Then we can use the rule above to show that $C_1 \to \mathcal{S}$ and therefore---since $C_1\not=\mathcal{S}$ is arbitrary---that $\mathcal{S}$ is the top class.
Indeed, since $C_1 \not= \mathcal{S}$, it follows from this rule that there is a $C_2\in\mathscr{C}$ such that $C_1\not=C_2$ and $C_1\to C_2$.
If also $C_2 \not= \mathcal{S}$, there is a third $C_3\in\mathscr{C}$ such that $C_i \not= C_3$ and $C_i \to C_3$ for all $i\in\{1,2\}$.
If also $C_3 \not= \mathcal{S}$, then there is a fourth $C_4\in\mathscr{C}$ such that $C_i \not= C_4$ and $C_i \to C_4$ for all $i\in\{1,\cdots,3\}$, and so on, always continuing to extend this sequence in the same way.
Then, since $\statespace{}$---and therefore also $\mathscr{C}$---is finite, and since $\mathcal{S}\in\mathscr{C}$, we will eventually find that, at some point, the next element $C_n$ of this sequence is such that $C_n=\mathcal{S}$.
Then, due to the fact that $C_i \to C_n$ for all $i\in\{1,\cdots,n-1\}$, we have in particular that $C_n=\mathcal{S}$ is accessible from $C_1$.
\end{proof}

\begin{corollary}\label{corollary: no top class}
If \/ $\uptrans{}$ does not have a top class, then it has at least two (disjoint) closed communication classes.
\end{corollary}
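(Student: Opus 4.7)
The plan is to show that there is always at least one closed communication class, and then rule out the case of exactly one by invoking Lemma~\ref{lemma: if single communication class then top class}.

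First I would observe that $\mathscr{C}$ is finite (since $\statespace{}$ is finite) and, by Lemma~\ref{lemma: -> induces partial order}, partially ordered by $\to$. Every finite non-empty partially ordered set has at least one maximal element, so $\mathscr{C}$ contains at least one maximal communication class. By Lemma~\ref{lemma: inescapable is maximal for communication classes}, any maximal communication class is closed, so there is at least one closed communication class.

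Next, I would argue by contradiction against the existence of exactly one closed communication class. If $\uptrans{}$ had a unique closed (equivalently, maximal) communication class $\mathcal{S}$, then Lemma~\ref{lemma: if single communication class then top class} would imply that $\mathcal{S}$ is the top class, contradicting our assumption that $\uptrans{}$ has no top class. Hence there must be at least two distinct closed communication classes. The disjointness is immediate because any two distinct elements of the partition $\mathscr{C}$ are disjoint.

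There is essentially no obstacle here beyond chaining together the two preceding lemmas with the elementary fact that a finite poset has maximal elements; the proof should be a short paragraph.
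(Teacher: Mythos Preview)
Your proposal is correct and matches the paper's own proof essentially step for step: use Lemma~\ref{lemma: -> induces partial order} and finiteness of $\mathscr{C}$ to get a maximal element, rule out uniqueness via Lemma~\ref{lemma: if single communication class then top class}, translate maximal to closed via Lemma~\ref{lemma: inescapable is maximal for communication classes}, and note disjointness from the partition property. The only cosmetic difference is that the paper keeps the argument in terms of maximal classes and converts to ``closed'' at the end, whereas you convert earlier.
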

\begin{proof}
Once more, let $\mathscr{C}$ be the partition of all communication classes in $\statespace{}$.
Since we know from Lemma~\ref{lemma: -> induces partial order} that $\to$ induces a partial order relation on $\mathscr{C}$ and because $\mathscr{C}$ is finite, there is at least one maximal communication class \cite[Theorem~3]{birkhoff1940lattice}.
The case that there is exactly one is impossible, because by Lemma~\ref{lemma: if single communication class then top class} that would mean that there is a top class.
Hence, there are at least two maximal---or, by Lemma~\ref{lemma: inescapable is maximal for communication classes}, closed---communication classes in $\statespace{}$.
Furthermore, both communication classes are necessarily disjoint because $\mathcal{C}$ is a partition of $\statespace{}$.
\end{proof}

\begin{lemma}\label{lemma: formula top class}
$\uptrans{}$ has a top class $\mathcal{R}$ if and only if \/ $\mathcal{R}' \coloneqq \{x \in \statespace{} \colon y \to x \text{ for all } y \in \statespace{}\} \not= \emptyset$ and, in that case, $\mathcal{R} = \mathcal{R}'$.
\end{lemma}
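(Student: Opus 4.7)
The plan is to prove both implications directly from the definitions of a top class and of $\mathcal{R}'$, using the fact that a top class (when it exists) is a closed communication class that dominates all others in the partial order induced by $\to$ on $\mathscr{C}$.

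For the forward direction, I would assume that $\uptrans{}$ has a top class $\mathcal{R}$ and argue $\mathcal{R} = \mathcal{R}'$ (which is in particular non-empty since $\mathcal{R}$ is a communication class). To show $\mathcal{R} \subseteq \mathcal{R}'$, pick any $x \in \mathcal{R}$ and any $y \in \statespace{}$; let $C\in\mathscr{C}$ be the communication class containing $y$. Since $\mathcal{R}$ dominates $C$, we have $C \to \mathcal{R}$, and therefore $y \to x$, so $x \in \mathcal{R}'$. For the reverse inclusion $\mathcal{R}' \subseteq \mathcal{R}$, pick any $x \in \mathcal{R}'$ and any $z \in \mathcal{R}$. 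By definition of $\mathcal{R}'$, $z \to x$. But $\mathcal{R}$ is the top class and hence closed (Lemma~\ref{lemma: if single communication class then top class} together with Lemma~\ref{lemma: inescapable is maximal for communication classes}), so $z \to x$ forces $x \in \mathcal{R}$.

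For the converse, I would suppose $\mathcal{R}' \neq \emptyset$ and verify that $\mathcal{R}'$ is a communication class that dominates every other class, hence is \emph{the} top class. First, any two $x, x' \in \mathcal{R}'$ mutually satisfy $x' \to x$ and $x \to x'$ (apply the defining property of $\mathcal{R}'$ with $y := x'$ and $y := x$), so all of $\mathcal{R}'$ lies in one communication class. Second, if $x \in \mathcal{R}'$ and $x'' \leftrightarrow x$, then for any $y \in \statespace{}$ we have $y \to x$ and $x \to x''$, whence by transitivity $y \to x''$, so $x'' \in \mathcal{R}'$; thus $\mathcal{R}'$ is exactly a communication class. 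Finally, for any $C \in \mathscr{C}$, take $y \in C$ and $x \in \mathcal{R}'$: $y \to x$ yields $C \to \mathcal{R}'$, so $\mathcal{R}'$ dominates every communication class and is therefore the top class.

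I expect no real obstacle here; the only subtlety is making sure to invoke the antisymmetry/closedness facts from Lemmas~\ref{lemma: -> induces partial order}--\ref{lemma: if single communication class then top class} at the right moment (in particular, that a top class is closed, which is what forbids $x \in \mathcal{R}'\setminus\mathcal{R}$ in the forward direction). The rest reduces to chasing the reflexivity and transitivity of $\to$.
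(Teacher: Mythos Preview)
Your proposal is correct and follows essentially the same route as the paper: both directions are handled by direct verification, showing $\mathcal{R}=\mathcal{R}'$ in the forward direction and establishing that $\mathcal{R}'$ is a communication class dominating all others in the converse. The only minor difference is that for the inclusion $\mathcal{R}'\subseteq\mathcal{R}$ you invoke closedness of the top class (via Lemmas~\ref{lemma: inescapable is maximal for communication classes} and~\ref{lemma: if single communication class then top class}), whereas the paper argues that any $x\in\mathcal{R}'$ must lie in a class that itself dominates every other class---your closedness shortcut is arguably crisper.
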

\begin{proof}
Suppose that $\uptrans{}$ has a top class $\mathcal{R}$.
Then it follows from the fact that $\mathcal{R}$ dominates (or, in other words, is accessible from) each other communication class $\mathcal{S}\in\mathscr{C}$, that any $x\in\statespace{}$ is in $\mathcal{R}$ if and only if $y\to x$ for all $y \in \statespace{}$. Indeed, on the one hand, if $x$ is in $\mathcal{R}$, then for any $y\in\statespace{}$, if we let $\mathcal{Y}$ be the unique communication class that contains $y$, we find that $y\to x$ because $\mathcal{Y}\to\mathcal{R}$. And on the other hand, if $y\to x$ for all $y\in\statespace{}$, then for any communication class $\mathcal{Y}$, if we let $y$ be any element of $\mathcal{Y}$, we find that $\mathcal{Y}\to\mathcal{R}$ because $y\to x$.
Hence, since $\mathcal{R}$ is implicitly assumed to be non-empty if it exists, we have that $\mathcal{R}' = \mathcal{R} \not= \emptyset$.

Conversely, suppose that $\mathcal{R}' = \{x \in \statespace{} \colon y \to x \text{ for all } y \in \statespace{}\} \not= \emptyset$.
Then first observe that $\mathcal{R}'$ is a communication class. Indeed, on the one hand, for any $x,z\in\mathcal{R}'$, the definition of $\mathcal{R}'$ trivially implies that $x\leftrightarrow z$. And on the other hand, for any $x\in\mathcal{R}'$ and $z\in\statespace{}$ such that $x\leftrightarrow z$, we know that $z\in\mathcal{R}'$ because, for all $y\in\statespace{}$, we have that $y\to x\to z$ and hence $y\to z$. So $\mathcal{R}'$ is a communication class. Since for any other communication class $\mathcal{S}\in\mathscr{C}$, we have that $y\to x$ for any $x\in\mathcal{R}'$ and any $y\in\mathcal{S}$, it follows that $\mathcal{S}\to\mathcal{R}'$ for all $\mathcal{S}\in\mathscr{C}$.
Hence, $\mathcal{R}'$ is the (non-empty) top class $\mathcal{R}$.
\end{proof}

\begin{lemma}\label{lemma: top class regularity formula}
$\uptrans{}$ satisfies (TCR) if and only if \/ $\mathcal{R}' \coloneqq \{x \in \statespace{} \colon (\exists k^\ast \in \nats{})\,(\forall k \geq k^\ast) \ \min \uptrans{}^k \indica{x} > 0\} \not= \emptyset$\/ and, in that case, $\mathcal{R}'$ is the top class $\mathcal{R}$ of \/ $\uptrans{}$.
\end{lemma}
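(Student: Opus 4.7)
The plan is to prove both directions at once by establishing the equality $\mathcal{R}=\mathcal{R}'$ whenever either side of the biconditional holds. The two essential tools are Lemma~\ref{lemma: directed path}, which turns $\uptrans{}^k \indica{y}(x) > 0$ into the existence of a directed path, and Lemma~\ref{lemma: formula top class}, which characterises the top class as the (possibly empty) set of states reachable from every state. A secondary tool is that $\uptrans{}^k \indica{y}$ is always non-negative (iterate \ref{transcoherence: bounds} starting from $\indica{y} \geq 0$), and that $\uptrans{}^k$ satisfies $\uptrans{}^k(\alpha h) = \alpha \uptrans{}^k h$ for $\alpha \geq 0$ (iterating \ref{transcoherence: homogeneity}).

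First I would handle the easy inclusion $\mathcal{R}' \subseteq \mathcal{R}$, which holds unconditionally and, combined with nonemptiness of $\mathcal{R}'$, automatically produces a top class: take any $x \in \mathcal{R}'$; by definition $\uptrans{}^k \indica{x}(y) > 0$ for every $y \in \statespace{}$ and every $k \geq k^\ast$; Lemma~\ref{lemma: directed path} then yields $y \to x$ for all $y \in \statespace{}$, so by Lemma~\ref{lemma: formula top class} $\uptrans{}$ has a top class $\mathcal{R}$ and $x\in\mathcal{R}$. The $(\Rightarrow)$ direction of the biconditional follows immediately: if (TCR) holds, then every $x \in \mathcal{R}$ lies in $\mathcal{R}'$ by the definition of regularity, so $\mathcal{R}' \supseteq \mathcal{R} \neq \emptyset$; combined with $\mathcal{R}'\subseteq\mathcal{R}$ we get $\mathcal{R}'=\mathcal{R}$.

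The harder step, and the main obstacle, is the reverse inclusion $\mathcal{R} \subseteq \mathcal{R}'$ under the sole hypothesis $\mathcal{R}' \neq \emptyset$; this is what furnishes the $(\Leftarrow)$ direction (and completes the equality in both directions). Fix any $x \in \mathcal{R}'$ (already known to lie in $\mathcal{R}$ by the previous step) and any $z \in \mathcal{R}$. Since $\mathcal{R}$ is a communication class, $x \to z$, so by Lemma~\ref{lemma: directed path} there exists $m \in \nats{}$ with $\alpha := \uptrans{}^m \indica{z}(x) > 0$. Because $\uptrans{}^m \indica{z} \geq 0$ pointwise, we have the pointwise bound $\uptrans{}^m \indica{z} \geq \alpha \indica{x}$. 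Applying \ref{transcoherence: monotonicity} and iterated \ref{transcoherence: homogeneity} to $\uptrans{}^k$ gives, for every $k \in \nats{}$ and every $y \in \statespace{}$,
\begin{equation*}
\uptrans{}^{k+m}\indica{z}(y) \;=\; \uptrans{}^{k}\!\left[\uptrans{}^m\indica{z}\right]\!(y) \;\geq\; \uptrans{}^{k}\!\left[\alpha\indica{x}\right]\!(y) \;=\; \alpha\,\uptrans{}^{k}\indica{x}(y).
\end{equation*}
Since $x \in \mathcal{R}'$, the right-hand side is strictly positive for all $y$ and all $k \geq k^\ast$, so $\uptrans{}^{k'}\indica{z}(y) > 0$ for all $y$ and all $k' \geq k^\ast + m$. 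Hence $z \in \mathcal{R}'$, which yields $\mathcal{R} \subseteq \mathcal{R}'$ and completes the proof that $\mathcal{R}=\mathcal{R}'$ and that $\uptrans{}$ is top class regular. The subtlety to watch is that $\uptrans{}^k$ is not linear, so the propagation step relies crucially on the pointwise lower bound $\uptrans{}^m\indica{z} \geq \alpha\indica{x}$ together with monotonicity and non-negative homogeneity, rather than on any additive decomposition.
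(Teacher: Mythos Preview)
Your proof is correct and follows the same overall architecture as the paper's: establish $\mathcal{R}'\subseteq\mathcal{R}$ via Lemma~\ref{lemma: directed path} and Lemma~\ref{lemma: formula top class}, read off the forward implication from the definition of regularity, and for the converse prove $\mathcal{R}\subseteq\mathcal{R}'$ by a propagation argument starting from a fixed element of $\mathcal{R}'$.

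The one noteworthy difference is how that propagation step is executed. The paper stays in the graph world: with $y\in\mathcal{R}'$ and $x\in\mathcal{R}$, it uses Lemma~\ref{lemma: directed path} to convert $\uptrans{}^{k'}\indica{y}(z)>0$ into a directed path of length $k'$ from each $z$ to $y$, concatenates it with a fixed path of length $k$ from $y$ to $x$, and invokes Lemma~\ref{lemma: directed path} once more to conclude $\uptrans{}^{k+k'}\indica{x}(z)>0$. You stay operator-theoretic instead: the pointwise bound $\uptrans{}^m\indica{z}\geq\alpha\,\indica{x}$ is pushed through $\uptrans{}^k$ using only \ref{iteratedcoherence: monotonicity} and \ref{iteratedcoherence: homogeneity}. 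Your route is a touch more direct since it avoids translating back and forth between the operator and the graph, while the paper's route makes the underlying path-concatenation picture explicit. Both are equally valid; neither buys anything the other lacks. One tiny edge case you skate over is $x=z$, where ``$x\to z$'' does not immediately give a directed path via Lemma~\ref{lemma: directed path}; but this is harmless since $z=x\in\mathcal{R}'$ already (or, equivalently, $x\in\mathcal{R}'$ forces $\uptrans{}^{k^\ast}\indica{x}(x)>0$, so an $m$ exists anyway).
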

\begin{proof}
Suppose that $\uptrans{}$ satisfies (TCR) and let $\mathcal{R}\not=\emptyset$ be the corresponding top class.
Then, since $\mathcal{R}$ is regular [\ref{property: regularity}], we clearly have that $\mathcal{R}\subseteq\mathcal{R}'$.
Moreover, since $\min\uptrans{}^{k_x}\indica{x} > 0$ for any $x\in\mathcal{R}'$ and some $k_x\in\nats$, it follows from Lemma~\ref{lemma: directed path} that any $x\in\mathcal{R}'$ is accessible from anywhere in $\mathscr{G}(\uptrans{})$.
Hence, due to Lemma~\ref{lemma: formula top class}, we also have that $\mathcal{R}' \subseteq \mathcal{R} = \{x \in \statespace{} \colon y \to x \text{ for all } y \in \statespace{}\}$.
As a conclusion, we have that $\mathcal{R}' = \mathcal{R} \not=\emptyset$.

Conversely, suppose that $\mathcal{R}' = \{x \in \statespace{} \colon (\exists k^\ast \in \nats{})\,(\forall k \geq k^\ast) \ \min \uptrans{}^k \indica{x} > 0\} \not= \emptyset$.
Again, since $\min\uptrans{}^{k_x}\indica{x} > 0$ for any $x\in\mathcal{R}'$ and some $k_x\in\nats$, Lemma~\ref{lemma: directed path} implies that any $x\in\mathcal{R}'$ is accessible from anywhere in $\mathscr{G}(\uptrans{})$.
So, we have that $\mathcal{R}' \subseteq \{x \in \statespace{} \colon y \to x \text{ for all } y \in \statespace{}\}$, which by Lemma~\ref{lemma: formula top class} and the fact that $\mathcal{R}'\not=\emptyset$, and therefore $\{x\in\statespace{}\colon y \to x \text{ for all } y \in \statespace{}\}\not=\emptyset$, implies that the top class $\mathcal{R}$ exists, that it is non-empty, and that $\mathcal{R}' \subseteq \mathcal{R}$.
To show that also $\mathcal{R} \subseteq \mathcal{R}'$, consider any $x\in\mathcal{R}$ and any $y\in\mathcal{R}'$.
Due to Lemma~\ref{lemma: formula top class}, $x$ is accessible from anywhere in $\mathscr{G}(\uptrans)$ and hence definitely from $y$, so there is a directed path from $y$ to $x$.
Let $k\in\nats{}$ be the length of this path.
Furthermore, since $y\in\mathcal{R}'$, there is some $k^\ast\in\nats$ such that $\min \uptrans{}^{k'}\indica{y}>0$ for all $k'\geq k^\ast$, and therefore that $\uptrans{}^{k'}\indica{y}(z) > 0$ for all $k'\geq k^\ast$ and all $z\in\statespace{}$.
Fix any such $k'\geq k^\ast$ and any such $z\in\statespace{}$.
Then, according to Lemma~\ref{lemma: directed path}, there is a directed path of length $k'$ from $z$ to $y$.
Hence, recalling that there is a directed path of length $k$ from $y$ to $x$, we infer that there is a directed path of length $k+k'$ from $z$ to $x$, and therefore, again by Lemma~\ref{lemma: directed path}, that $\uptrans{}^{k+k'}\indica{x}(z) > 0$.
Since this holds for any $k'\geq k^\ast$ and any $z\in\statespace{}$, we have that $\min \uptrans{}^{k+k'}\indica{x}>0$ for all $k'\geq k^\ast$, or equivalently, that $\min \uptrans{}^{\ell}\indica{x}>0$ for all $\ell\geq k+ k^\ast$.
As a result, $x$ is an element of $\mathcal{R}'$. Since $x\in\mathcal{R}$ was chosen arbitrarily, it follows that $\mathcal{R}\subseteq\mathcal{R}'$.
\end{proof}

\section{Proof of Proposition~\ref{proposition: if top class then eigenvector in top class}}

In the following, we will often use the fact that, since $\uptrans{}$ is an upper transition operator, the iterates of $\uptrans{}$ will also be upper transition operators.
This can easily be derived using the properties \ref{transcoherence: upper bound}--\ref{transcoherence: mixed additivity} and an induction argument in $k$.
For an illustration of how to do so, we refer to \cite[Lemma~23]{extended8627473}.

\begin{lemma}\label{lemma: T^k is coherent}
If \/ $\uptrans{}$ is an upper transition operator then, for any $k \in \nats{}$, $\uptrans{}^k$ is an upper transition operator as well. 
\end{lemma}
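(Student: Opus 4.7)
The plan is to proceed by induction on $k\in\nats{}$. The base case $k=1$ is immediate since $\uptrans{}^{1}=\uptrans{}$ is an upper transition operator by assumption. For the inductive step, I will assume that $\uptrans{}^{k}$ satisfies \ref{transcoherence: upper bound}--\ref{transcoherence: homogeneity} and show that the same holds for $\uptrans{}^{k+1}=\uptrans{}\circ\uptrans{}^{k}$.

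First, to verify \ref{transcoherence: upper bound} for $\uptrans{}^{k+1}$, I would fix any $h\in\setofgambles{}(\statespace{})$ and observe that, by the induction hypothesis applied to the constant function $\max h$ (seen as an element of $\setofgambles{}(\statespace{})$) together with monotonicity [\ref{transcoherence: monotonicity}] of $\uptrans{}^{k}$, we have $\uptrans{}^{k}h\leq \max h$. Applying $\uptrans{}$ on both sides and using \ref{transcoherence: upper bound} for $\uptrans{}$ yields $\uptrans{}^{k+1}h=\uptrans{}(\uptrans{}^{k}h)\leq\max(\uptrans{}^{k}h)\leq\max h$, as desired. Note that \ref{transcoherence: monotonicity} is one of the derived properties listed in the paper as following from \ref{transcoherence: upper bound}--\ref{transcoherence: homogeneity}, so it is available for both $\uptrans{}$ and (by induction) $\uptrans{}^{k}$.

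Next, for sub-additivity [\ref{transcoherence: subadditivity}], I would fix $h,g\in\setofgambles{}(\statespace{})$ and chain two applications of sub-additivity: by the induction hypothesis $\uptrans{}^{k}(h+g)\leq\uptrans{}^{k}h+\uptrans{}^{k}g$, so by monotonicity of $\uptrans{}$ and then \ref{transcoherence: subadditivity} for $\uptrans{}$ itself we obtain $\uptrans{}^{k+1}(h+g)=\uptrans{}(\uptrans{}^{k}(h+g))\leq\uptrans{}(\uptrans{}^{k}h+\uptrans{}^{k}g)\leq\uptrans{}(\uptrans{}^{k}h)+\uptrans{}(\uptrans{}^{k}g)=\uptrans{}^{k+1}h+\uptrans{}^{k+1}g$. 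Finally, for non-negative homogeneity [\ref{transcoherence: homogeneity}], I would fix $\lambda\geq 0$ and $h\in\setofgambles{}(\statespace{})$ and simply apply \ref{transcoherence: homogeneity} twice: $\uptrans{}^{k+1}(\lambda h)=\uptrans{}(\uptrans{}^{k}(\lambda h))=\uptrans{}(\lambda\uptrans{}^{k}h)=\lambda\uptrans{}(\uptrans{}^{k}h)=\lambda\uptrans{}^{k+1}h$.

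There is no genuine obstacle in this proof; the only subtlety is making sure that each derived property of upper transition operators which we invoke (in particular monotonicity) is either a direct assumption or has already been established for $\uptrans{}^{k}$ by the induction hypothesis. Since the paper explicitly lists \ref{transcoherence: bounds}--\ref{transcoherence: mixed additivity} as consequences of \ref{transcoherence: upper bound}--\ref{transcoherence: homogeneity}, and since the inductive hypothesis gives us \ref{transcoherence: upper bound}--\ref{transcoherence: homogeneity} for $\uptrans{}^{k}$, we automatically also have \ref{transcoherence: monotonicity} for $\uptrans{}^{k}$, which closes the argument.
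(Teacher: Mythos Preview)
Your proposal is correct and follows exactly the approach the paper indicates: the paper does not spell out the proof but simply states that it ``can easily be derived using the properties \ref{transcoherence: upper bound}--\ref{transcoherence: mixed additivity} and an induction argument in $k$'' and refers to an external reference for an illustration. Your write-up supplies precisely those details, and the use of monotonicity (a derived property) in the sub-additivity step is handled correctly via the induction hypothesis.
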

\noindent
The properties \ref{transcoherence: subadditivity}--\ref{transcoherence: mixed additivity} therefore also apply to $\uptrans{}^k$:
\begin{enumerate}[leftmargin=*,ref={\upshape{}U\arabic*$^\prime$},label={\upshape{}U\arabic*$^\prime$}.,itemsep=3pt, series=iteratedcoherence, start=2]
\item\label{iteratedcoherence: subadditivity} $\uptrans{}^k (h+g) \leq \uptrans{}^k h + \uptrans{}^k g$ \hfill [sub-additivity];
\item\label{iteratedcoherence: homogeneity} $\uptrans{}^k (\lambda h) = \lambda \uptrans{}^k h$ \hfill [non-negative homogeneity];
\item\label{iteratedcoherence: bounds} $\min h  \leq \uptrans{}^k h \leq \max h$ \hfill [boundedness];
\item\label{iteratedcoherence: constant addivity} $\uptrans{}^k (\mu + h) = \mu + \uptrans{}^k h$ \hfill [constant additivity];
\item\label{iteratedcoherence: monotonicity} if $h \leq g$ then $\uptrans{}^k h \leq \uptrans{}^k g$ \hfill [monotonicity];
\item\label{iteratedcoherence: mixed additivity} $\uptrans{}^k h - \uptrans{}^k g \leq \uptrans{}^k  (h-g)$ \hfill [mixed sub-additivity],
\end{enumerate}
for all $k \in \natz{}$, all $h,g \in \setofgambles{}(\statespace)$, all real $\mu$ and all real $\lambda \geq 0$.

Many of the results in this appendix will make use of the graph-theoretic concepts and notations that were defined in Section~\ref{Sect: accessibility and topical maps}.
Unless mentioned otherwise, we will always implicitly assume that they correspond to the graph $\mathscr{G}(\uptrans{})$ of $\uptrans{}$.
Note however that, due to Corollary~\ref{corollary: graphs are identical}, we could also equivalently consider the graphs $\mathscr{G}'(\uptrans{})$ or $\mathscr{G}'(\avuptrans{f}{})$.

\begin{lemma}\label{lemma: not from S to Sc}
For any $\uptrans{}$ with a closed class $\mathcal{S}$, we have that $\uptrans{}^k \indica{\mathcal{S}^c} (x) = 0$ for all $x \in \mathcal{S}$ and all $k \in \nats{}$. 
\end{lemma}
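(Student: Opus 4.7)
The plan is to decompose the indicator $\indica{\mathcal{S}^c}$ as the finite sum $\sum_{y\in\mathcal{S}^c}\indica{y}$ and then to apply the iterated sub-additivity property \ref{iteratedcoherence: subadditivity}, which extends by induction from binary sums to arbitrary finite sums. This will yield the pointwise bound
\begin{align*}
\uptrans{}^k\indica{\mathcal{S}^c}(x) \leq \sum_{y\in\mathcal{S}^c} \uptrans{}^k\indica{y}(x)
\end{align*}
for every $x\in\statespace{}$ and every $k\in\nats{}$.

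Next, I would use the closedness of $\mathcal{S}$ to argue that each term on the right-hand side vanishes when $x\in\mathcal{S}$. Concretely, fix $x\in\mathcal{S}$ and $y\in\mathcal{S}^c$. Since $\mathcal{S}$ is closed and $y\notin\mathcal{S}$, we have $x\not\to y$, so in particular there is no directed path of any finite length from $x$ to $y$ in $\mathscr{G}(\uptrans{})$. Lemma~\ref{lemma: directed path} then tells us that $\uptrans{}^k\indica{y}(x) \leq 0$ for all $k\in\nats{}$. Combined with the boundedness property~\ref{iteratedcoherence: bounds} applied to $\indica{y}\geq 0$, which gives $\uptrans{}^k\indica{y}(x)\geq \min\indica{y}=0$, we conclude $\uptrans{}^k\indica{y}(x)=0$.

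Plugging this into the sub-additivity bound gives $\uptrans{}^k\indica{\mathcal{S}^c}(x)\leq 0$. Finally, a second appeal to \ref{iteratedcoherence: bounds} with $\indica{\mathcal{S}^c}\geq 0$ forces $\uptrans{}^k\indica{\mathcal{S}^c}(x)\geq 0$, and the two inequalities together yield the claimed equality. No step here looks delicate; the only thing to be careful about is that \ref{transcoherence: subadditivity} is a binary statement and must first be iterated to cover the finite sum over $\mathcal{S}^c$, which is immediate by induction on $|\mathcal{S}^c|$.
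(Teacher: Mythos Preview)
Your proposal is correct and follows essentially the same approach as the paper's proof: decompose $\indica{\mathcal{S}^c}$ into singleton indicators, apply iterated sub-additivity~\ref{iteratedcoherence: subadditivity}, use closedness together with Lemma~\ref{lemma: directed path} to bound each term by zero, and sandwich with the lower bound from~\ref{iteratedcoherence: bounds}. The paper skips your intermediate step of showing $\uptrans{}^k\indica{y}(x)=0$ (it only uses $\leq 0$, which already suffices), and it does not spell out the induction extending binary sub-additivity to finite sums, but otherwise the arguments are identical.
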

\begin{proof}
Consider any $x \in \mathcal{S}$.
Then, since $\mathcal{S}$ is closed, we have that $x \not\to y$ for any $y \in \mathcal{S}^c$, which by Lemma~\ref{lemma: directed path} implies that $\uptrans{}^k \indica{y} (x) \leq 0$ for all $k \in \nats{}$.
Hence,
\begin{align*}
0 \leq \uptrans{}^k \indica{\mathcal{S}^c} (x) 
 = \Big[ \uptrans{}^k \Big( \sum\nolimits_{y \in \mathcal{S}^c} \indica{y} \Big) \Big] (x) \leq \sum\nolimits_{y \in \mathcal{S}^c} \uptrans{}^k \indica{y} (x) \leq 0 \text{ for all } k \in \nats{},
\end{align*}
where the first step uses \ref{iteratedcoherence: bounds} and the third uses \ref{iteratedcoherence: subadditivity}.
\end{proof}

\begin{lemma}\label{lemma: time average in maximal class only depends on value of f in maximal class}
For any $f\in\setofgambles{}(\statespace{})$ and any $\uptrans{}$ with a closed class $\mathcal{S}$, we have that $\avuptrans{f}{}  h(x)  =  \avuptrans{f}{}  ( h \indica{\mathcal{S}} )(x)$ for all $h \in \setofgambles{}(\statespace{})$ and all $x \in \mathcal{S}$.
\end{lemma}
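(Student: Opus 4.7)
The plan is to reduce the claim to a statement purely about $\uptrans{}$ and then exploit that $\mathcal{S}$ being closed, via Lemma~\ref{lemma: not from S to Sc} at $k=1$, says $\uptrans{}\indica{\mathcal{S}^c}(x) = 0$ for all $x \in \mathcal{S}$. Since $\avuptrans{f}{}h = f + \uptrans{}h$ and likewise $\avuptrans{f}{}(h\indica{\mathcal{S}}) = f + \uptrans{}(h\indica{\mathcal{S}})$, the equality to prove is simply
\begin{equation*}
\uptrans{}h(x) = \uptrans{}(h\indica{\mathcal{S}})(x) \quad \text{for all } x\in\mathcal{S}.
\end{equation*}
This is the only genuine content of the lemma.

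First I would establish the auxiliary claim that $\uptrans{}(h\indica{\mathcal{S}^c})(x) = 0$ for all $x\in\mathcal{S}$. Setting $M \coloneqq \supnorm{h} \geq 0$, one has $-M\indica{\mathcal{S}^c} \leq h\indica{\mathcal{S}^c} \leq M\indica{\mathcal{S}^c}$. Monotonicity \ref{transcoherence: monotonicity} and non-negative homogeneity \ref{transcoherence: homogeneity} yield $\uptrans{}(h\indica{\mathcal{S}^c}) \leq M\uptrans{}\indica{\mathcal{S}^c}$, while mixed sub-additivity \ref{transcoherence: mixed additivity} applied to $\uptrans{}(0) - \uptrans{}(M\indica{\mathcal{S}^c}) \leq \uptrans{}(-M\indica{\mathcal{S}^c})$, together with $\uptrans{}(0)=0$ from \ref{transcoherence: homogeneity}, gives $\uptrans{}(h\indica{\mathcal{S}^c}) \geq -M\uptrans{}\indica{\mathcal{S}^c}$. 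At $x\in\mathcal{S}$, Lemma~\ref{lemma: not from S to Sc} makes both bounds vanish, so the auxiliary claim follows.

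Finally I would invoke mixed sub-additivity \ref{transcoherence: mixed additivity} twice with the decomposition $h = h\indica{\mathcal{S}} + h\indica{\mathcal{S}^c}$. In one direction,
\begin{equation*}
\uptrans{}h(x) - \uptrans{}(h\indica{\mathcal{S}})(x) \leq \uptrans{}(h - h\indica{\mathcal{S}})(x) = \uptrans{}(h\indica{\mathcal{S}^c})(x) = 0;
\end{equation*}
in the other,
\begin{equation*}
\uptrans{}(h\indica{\mathcal{S}})(x) - \uptrans{}h(x) \leq \uptrans{}(h\indica{\mathcal{S}} - h)(x) = \uptrans{}(-h\indica{\mathcal{S}^c})(x) = 0,
\end{equation*}
where the last equality uses the auxiliary claim applied to $-h$ (whose sup-norm is still $M$). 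Combining the two inequalities yields $\uptrans{}h(x) = \uptrans{}(h\indica{\mathcal{S}})(x)$, and adding $f(x)$ to both sides delivers the lemma. There is no real obstacle here: the whole argument is a routine exercise in the coherence axioms, and the only conceptual input is that closedness of $\mathcal{S}$ kills the ``mass'' that $\uptrans{}$ can put on $\mathcal{S}^c$ when started from $\mathcal{S}$.
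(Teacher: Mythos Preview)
Your proof is correct and follows essentially the same approach as the paper: both reduce the claim to $\uptrans{}h(x)=\uptrans{}(h\indica{\mathcal{S}})(x)$ for $x\in\mathcal{S}$ and control the $\mathcal{S}^c$-part via monotonicity, non-negative homogeneity, and Lemma~\ref{lemma: not from S to Sc}. The only cosmetic difference is organisational---you first isolate the auxiliary fact $\uptrans{}(h\indica{\mathcal{S}^c})(x)=0$ and then apply \ref{transcoherence: mixed additivity} symmetrically, whereas the paper uses \ref{transcoherence: subadditivity} and \ref{transcoherence: mixed additivity} directly for the two inequalities.
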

\begin{proof}
Fix any $h \in \setofgambles{}(\statespace{})$ and any $x \in \mathcal{S}$.
By sub-additivity [\ref{transcoherence: subadditivity}], we have that $\uptrans{}h(x) \leq \uptrans{}(h \indica{\mathcal{S}})(x) + \uptrans{}(h \indica{\mathcal{S}^c})(x)$.
Since $h \indica{\mathcal{S}^c} \leq \supnorm{h} \indica{\mathcal{S}^c}$, monotonicity [\ref{transcoherence: monotonicity}] therefore implies that  
\begin{align*}
\uptrans{}h(x) \leq \uptrans{}(h \indica{\mathcal{S}})(x) + \uptrans{}(\supnorm{h} \indica{\mathcal{S}^c})(x) 
&= \uptrans{}(h \indica{\mathcal{S}})(x) + \supnorm{h} \uptrans{}\indica{\mathcal{S}^c}(x) \\
&= \uptrans{}(h \indica{\mathcal{S}})(x),
\end{align*}
where the first equality follows from non-negative homogeneity [\ref{transcoherence: homogeneity}] and the second from Lemma~\ref{lemma: not from S to Sc}.
Hence, we obtain that $\avuptrans{f}{} h (x) \leq \avuptrans{f}{} (h \indica{\mathcal{S}}) (x)$.
To prove the converse inequality, observe that 
\begin{align*}
\uptrans{}h(x) \geq \uptrans{}(h \indica{\mathcal{S}})(x) - \uptrans{}( - h \indica{\mathcal{S}^c})(x)
&\geq \uptrans{}(h \indica{\mathcal{S}})(x) - \uptrans{}(\supnorm{h} \indica{\mathcal{S}^c})(x) \\
&= \uptrans{}(h \indica{\mathcal{S}})(x) - \supnorm{h} \uptrans{}\indica{\mathcal{S}^c}(x) \\
&= \uptrans{}(h \indica{\mathcal{S}})(x),
\end{align*}
where the first step follows from \ref{transcoherence: mixed additivity}, the second follows from $- h \indica{\mathcal{S}^c} \leq \supnorm{h} \indica{\mathcal{S}^c}$ and monotonicity [\ref{transcoherence: monotonicity}], the third follows from non-negative homogeneity [\ref{transcoherence: homogeneity}] and the last from Lemma~\ref{lemma: not from S to Sc}.
So, we have that $\uptrans{} h (x) \geq \uptrans{} (h \indica{\mathcal{S}}) (x)$ and therefore also that $\avuptrans{f}{} h (x) \geq \avuptrans{f}{} (h \indica{\mathcal{S}}) (x)$.
Hence, $\avuptrans{f}{} h (x) = \avuptrans{f}{} (h \indica{\mathcal{S}}) (x)$ for all $h \in \setofgambles{}(\statespace{})$ and all $x \in \mathcal{S}$.
\end{proof}


To prove Proposition~\ref{proposition: if top class then eigenvector in top class}, we will use the following notations that allow us to confine the dynamics of the process to a closed class.
Let $\uptrans{}$ be any upper transition operator and let $\mathcal{S}$ be any non-empty subset of $\statespace{}$.
For any $h \in \setofgambles{}(\statespace{})$, let $h \vert_{\mathcal{S}} \in \setofgambles{}(\mathcal{S})$ denote the restriction of $h$ to the domain $\mathcal{S}$.
Additionally, for any $h \in \setofgambles{}(\mathcal{S})$, we let $h^\uparrow \in \setofgambles{}(\statespace{})$ denote the zero-extension of $h$ into $\setofgambles{}(\statespace{})$, which takes the value $h(x)$ for $x \in \mathcal{S}$ and $0$ elsewhere.
Then note that $( h \vert_\mathcal{S} )^\uparrow = h \indica{\mathcal{S}}$ for any $h \in \setofgambles{}(\statespace{})$ and $( g^\uparrow )\vert_\mathcal{S} = g$ for any $g \in \setofgambles{}(\mathcal{S})$.
Let $\avuptrans{f,\mathcal{S}}{} \colon \setofgambles{}(\mathcal{S}) \to \setofgambles{}(\mathcal{S})$ be defined by $\avuptrans{f,\mathcal{S}}{} h \coloneqq (\avuptrans{f}{} h^\uparrow)\vert_{\mathcal{S}}$ for all $h \in \setofgambles{}(\mathcal{S})$.

\begin{lemma}\label{lemma: G^k is equal to G_R^k}
For any $f\in\setofgambles{}(\statespace{})$ and any $\uptrans{}$ with a closed class $\mathcal{S}$, we have that $(\avuptrans{f}{k} h)\vert_{\mathcal{S}} = \avuptrans{f,\mathcal{S}}{k} (h\vert_{\mathcal{S}})$ for all $h \in \setofgambles{}(\statespace{})$ and all $k \in \nats{}$.
\end{lemma}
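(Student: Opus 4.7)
The plan is to prove the identity by induction on $k$, with the bulk of the work being absorbed by the preceding Lemma~\ref{lemma: time average in maximal class only depends on value of f in maximal class} (which says that $\avuptrans{f}{} h(x) = \avuptrans{f}{}(h\indica{\mathcal{S}})(x)$ for all $x \in \mathcal{S}$) together with the two elementary identities $(h\vert_\mathcal{S})^\uparrow = h\indica{\mathcal{S}}$ and $(g^\uparrow)\vert_\mathcal{S} = g$.

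For the base case $k=1$, I would unfold the definition of $\avuptrans{f,\mathcal{S}}{}$ on the right-hand side, namely $\avuptrans{f,\mathcal{S}}{}(h\vert_\mathcal{S}) = \big(\avuptrans{f}{} (h\vert_\mathcal{S})^\uparrow\big)\vert_\mathcal{S} = \big(\avuptrans{f}{}(h\indica{\mathcal{S}})\big)\vert_\mathcal{S}$, and then invoke Lemma~\ref{lemma: time average in maximal class only depends on value of f in maximal class} pointwise on $\mathcal{S}$ to conclude that this is equal to $(\avuptrans{f}{} h)\vert_\mathcal{S}$.

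For the inductive step, I would assume the identity holds for some $k\in\nats{}$ and write
\begin{align*}
\big(\avuptrans{f}{k+1} h\big)\vert_\mathcal{S}
= \big(\avuptrans{f}{}(\avuptrans{f}{k} h)\big)\vert_\mathcal{S}
= \big(\avuptrans{f}{}\big((\avuptrans{f}{k} h)\indica{\mathcal{S}}\big)\big)\vert_\mathcal{S}
= \big(\avuptrans{f}{}\big((\avuptrans{f}{k} h)\vert_\mathcal{S}\big)^\uparrow\big)\vert_\mathcal{S}
= \avuptrans{f,\mathcal{S}}{}\big((\avuptrans{f}{k} h)\vert_\mathcal{S}\big),
\end{align*}
where the second equality is again Lemma~\ref{lemma: time average in maximal class only depends on value of f in maximal class} (applied pointwise on $\mathcal{S}$), the third uses $(g\vert_\mathcal{S})^\uparrow = g\indica{\mathcal{S}}$, and the last is the definition of $\avuptrans{f,\mathcal{S}}{}$. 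Applying the induction hypothesis to $(\avuptrans{f}{k} h)\vert_\mathcal{S}$ then yields $\avuptrans{f,\mathcal{S}}{}\big(\avuptrans{f,\mathcal{S}}{k}(h\vert_\mathcal{S})\big) = \avuptrans{f,\mathcal{S}}{k+1}(h\vert_\mathcal{S})$, which closes the induction.

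There is no real obstacle here — the entire content of the result is already packaged in Lemma~\ref{lemma: time average in maximal class only depends on value of f in maximal class}, and the only thing to watch out for is being careful with the two notational operations $\cdot\vert_\mathcal{S}$ and $\cdot^\uparrow$, making sure each equality in the chain above is applied on the correct side of the domain.
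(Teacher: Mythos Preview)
Your proof is correct and follows essentially the same approach as the paper's: induction on $k$, with the base case and the inductive step both reduced to Lemma~\ref{lemma: time average in maximal class only depends on value of f in maximal class} together with the identities $(h\vert_\mathcal{S})^\uparrow = h\indica{\mathcal{S}}$ and the definition of $\avuptrans{f,\mathcal{S}}{}$. The paper's inductive step is written a touch more compactly (it applies the already-established $k=1$ case directly rather than re-expanding through $\indica{\mathcal{S}}$ and $\uparrow$), but the argument is the same.
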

\begin{proof}
We use an induction argument in $k \in \nats{}$.
That the statement holds for $k = 1$ follows immediately from Lemma~\ref{lemma: time average in maximal class only depends on value of f in maximal class}.
Indeed, for any $h \in \setofgambles{}(\statespace{})$, Lemma~\ref{lemma: time average in maximal class only depends on value of f in maximal class} says that $\avuptrans{f}{}h (x) = \avuptrans{f}{}(h \indica{\mathcal{S}}) (x)$ for all $x \in \mathcal{S}$ or, equivalently, that $(\avuptrans{f}{}h)\vert_{\mathcal{S}} = \big( \avuptrans{f}{}(h \indica{\mathcal{S}}) \big)\vert_{\mathcal{S}}$.
This implies, by the definition of $\avuptrans{f,\mathcal{S}}{}$ and the fact that $h \indica{\mathcal{S}} = (h \vert_{\mathcal{S}})^\uparrow$, that $(\avuptrans{f}{} h)\vert_{\mathcal{S}} = \avuptrans{f,\mathcal{S}}{} (h\vert_{\mathcal{S}})$ for any $h \in \setofgambles{}(\statespace{})$, which provides an induction base.

Now assume that the statement holds for all $i \in \{1,\cdots,k\}$, with $k \in \nats{}$.
Then, for any $h \in \setofgambles{}(\statespace{})$, we have that
\begin{align*}
(\avuptrans{f}{k+1} h)\vert_{\mathcal{S}} 
= \big( \avuptrans{f}{} (\avuptrans{f}{k} h) \big)\vert_{\mathcal{S}}
= \avuptrans{f,\mathcal{S}}{} \big((\avuptrans{f}{k} h)\vert_\mathcal{S} \big)
= \avuptrans{f,\mathcal{S}}{} \big( \avuptrans{f,\mathcal{S}}{k} (h\vert_\mathcal{S}) \big)
= \avuptrans{f,\mathcal{S}}{k+1} (h\vert_\mathcal{S}),
\end{align*}
where the second equality follows from the fact that the statement holds for $i=1$ and the third equality follows from the assumption that the statement holds for~$i=k$.
Combined with the induction base, this concludes the proof.
\end{proof}

\begin{lemma}\label{lemma: G_R is topical}
For any $f\in\setofgambles{}(\statespace{})$ and any $\uptrans{}$ with a closed class $\mathcal{S}$, the map $\avuptrans{f,\mathcal{S}}{}$ is topical.
\end{lemma}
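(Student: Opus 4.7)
The plan is to verify the two defining axioms of a topical map for $\avuptrans{f,\mathcal{S}}{}$, namely constant additivity~\ref{topical: constant addivity} and monotonicity~\ref{topical: monotonicity}, by relating them back to the corresponding properties of $\avuptrans{f}{}$, which we already know to be topical (as noted at the end of Section~\ref{Sect: accessibility and topical maps}). The obvious strategy is to push the axiom through the zero-extension $(\cdot)^\uparrow$ and the restriction $(\cdot)\vert_\mathcal{S}$, which is immediate for monotonicity but slightly subtle for constant additivity because the zero-extension does not commute with adding a constant.

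For monotonicity, I would take $h,g \in \setofgambles{}(\mathcal{S})$ with $h \leq g$ and simply observe that this implies $h^\uparrow \leq g^\uparrow$ in $\setofgambles{}(\statespace{})$, since both zero-extensions agree (and are zero) outside $\mathcal{S}$. Applying the monotonicity of $\avuptrans{f}{}$ then gives $\avuptrans{f}{}h^\uparrow \leq \avuptrans{f}{}g^\uparrow$, and restricting to $\mathcal{S}$ yields $\avuptrans{f,\mathcal{S}}{}h \leq \avuptrans{f,\mathcal{S}}{}g$.

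For constant additivity, fix $h \in \setofgambles{}(\mathcal{S})$ and $\mu \in \reals$. The key observation is that $(\mu + h)^\uparrow = \mu\indica{\mathcal{S}} + h^\uparrow$, rather than $\mu + h^\uparrow$, because the zero-extension sets the function to $0$ (not to $\mu$) outside $\mathcal{S}$. The expected obstacle is exactly this mismatch; the way around it is to use Lemma~\ref{lemma: time average in maximal class only depends on value of f in maximal class}. Applied with $h' = \mu + h^\uparrow$, the lemma yields, for every $x \in \mathcal{S}$,
\begin{align*}
\avuptrans{f}{}(\mu + h^\uparrow)(x) \,=\, \avuptrans{f}{}\bigl((\mu + h^\uparrow)\indica{\mathcal{S}}\bigr)(x) \,=\, \avuptrans{f}{}(\mu\indica{\mathcal{S}} + h^\uparrow)(x) \,=\, \avuptrans{f}{}(\mu + h)^\uparrow(x),
\end{align*}
where the second equality uses that $h^\uparrow \indica{\mathcal{S}} = h^\uparrow$. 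Combined with the constant additivity of $\avuptrans{f}{}$ itself, this gives, for every $x \in \mathcal{S}$,
\begin{align*}
\avuptrans{f,\mathcal{S}}{}(\mu + h)(x) \,=\, \avuptrans{f}{}(\mu + h)^\uparrow(x) \,=\, \avuptrans{f}{}(\mu + h^\uparrow)(x) \,=\, \mu + \avuptrans{f}{}h^\uparrow(x) \,=\, \mu + \avuptrans{f,\mathcal{S}}{}h(x).
\end{align*}

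Together, these two steps establish \ref{topical: constant addivity} and \ref{topical: monotonicity} for $\avuptrans{f,\mathcal{S}}{}$, and hence show that it is topical. I expect the whole proof to be short; the only non-routine step is recognising that Lemma~\ref{lemma: time average in maximal class only depends on value of f in maximal class} is precisely what is needed to compensate for the fact that the zero-extension is not affine-linear in the argument.
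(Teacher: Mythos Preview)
Your proof is correct and follows essentially the same approach as the paper's. The only cosmetic difference is that, for constant additivity, the paper invokes Lemma~\ref{lemma: G^k is equal to G_R^k} (its $k=1$ case) to obtain $\big(\avuptrans{f}{}(\mu + h^\uparrow)\big)\vert_\mathcal{S} = \avuptrans{f,\mathcal{S}}{}\big((\mu + h^\uparrow)\vert_\mathcal{S}\big) = \avuptrans{f,\mathcal{S}}{}(\mu + h)$, whereas you appeal directly to Lemma~\ref{lemma: time average in maximal class only depends on value of f in maximal class}; since the $k=1$ case of Lemma~\ref{lemma: G^k is equal to G_R^k} is itself proved from Lemma~\ref{lemma: time average in maximal class only depends on value of f in maximal class}, the two arguments are the same in substance.
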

\begin{proof}
To prove \ref{topical: constant addivity}, consider any $\mu \in \reals{}$ and any $h \in \setofgambles{}(\mathcal{S})$.
Since $\avuptrans{f}{}$ satisfies \ref{topical: constant addivity}, we have that $\avuptrans{f}{}(\mu + h^\uparrow) = \mu + \avuptrans{f}{}(h^\uparrow)$ and therefore, also that $\big( \avuptrans{f}{}(\mu + h^\uparrow) \big)\vert_\mathcal{S} = \mu + (\avuptrans{f}{} h^\uparrow )\vert_\mathcal{S} = \mu + \avuptrans{f,\mathcal{S}}{} h$.
Moreover, by Lemma~\ref{lemma: G^k is equal to G_R^k}, we have that $\big( \avuptrans{f}{}(\mu + h^\uparrow) \big)\vert_\mathcal{S} = \avuptrans{f,\mathcal{S}}{}\big((\mu + h^\uparrow)\vert_\mathcal{S} \big) = \avuptrans{f,\mathcal{S}}{}(\mu + h)$, implying that \ref{topical: constant addivity} holds.
Finally, that monotonicity [\ref{topical: monotonicity}] holds for $\avuptrans{f,\mathcal{S}}{}$ follows directly from its definition and the fact $\avuptrans{f}{}$ is monotone.
\end{proof}


\begin{lemma}\label{lemma: T_f,S has an eigenvector}
For any $f\in\setofgambles{}(\statespace{})$ and any $\uptrans{}$ with a closed communication class $\mathcal{S}$, the map $\avuptrans{f,\mathcal{S}}{}$ has exactly one (additive) eigenvalue.
\end{lemma}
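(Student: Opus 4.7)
The plan is to apply Theorem~\ref{theorem: strongly connected then eigenvector} to the topical map $\avuptrans{f,\mathcal{S}}{}$, and then invoke Corollary~\ref{corollary:if eigenvalue then it is the only eigenvalue} for uniqueness. First I would record that $\avuptrans{f,\mathcal{S}}{}\colon\setofgambles{}(\mathcal{S})\to\setofgambles{}(\mathcal{S})$ is topical by Lemma~\ref{lemma: G_R is topical}. Identifying $\setofgambles{}(\mathcal{S})$ with $\reals{}^{|\mathcal{S}|}$, we obtain the associated graph $\mathscr{G}'(\avuptrans{f,\mathcal{S}}{})$ with vertex set $\mathcal{S}$, in the sense of Section~\ref{Sect: A Sufficient Condition for Weak Ergodicity}.

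The key step is to show that $\mathscr{G}'(\avuptrans{f,\mathcal{S}}{})$ is strongly connected, and to do so I would first identify its edges with those of $\mathscr{G}(\uptrans{})$ whose endpoints both lie in $\mathcal{S}$. Fix $x,y\in\mathcal{S}$ and let $g\coloneqq\indica{y}\in\setofgambles{}(\mathcal{S})$, noting that $g^\uparrow=\indica{y}\in\setofgambles{}(\statespace{})$ since $y\in\mathcal{S}$. Then, for all $\alpha\geq 0$,
\begin{align*}
[\avuptrans{f,\mathcal{S}}{}(\alpha g)](x) = [\avuptrans{f}{}(\alpha g^\uparrow)](x) = f(x) + \alpha\,\uptrans{}\indica{y}(x),
\end{align*}
which tends to $+\infty$ as $\alpha\to+\infty$ if and only if $\uptrans{}\indica{y}(x)>0$, i.e., if and only if there is an edge from $x$ to $y$ in $\mathscr{G}(\uptrans{})$. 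So the edge set of $\mathscr{G}'(\avuptrans{f,\mathcal{S}}{})$ is exactly the restriction to $\mathcal{S}$ of the edge set of $\mathscr{G}(\uptrans{})$.

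Next, since $\mathcal{S}$ is a communication class, any $x,y\in\mathcal{S}$ satisfy $x\to y$ in $\mathscr{G}(\uptrans{})$, so there is a directed path $x=x_0,x_1,\ldots,x_k=y$ in $\mathscr{G}(\uptrans{})$. Closedness of $\mathcal{S}$ precludes $x\to z$ for any $z\in\mathcal{S}^c$, so every $x_i$ must lie in $\mathcal{S}$ and the path lives entirely inside $\mathcal{S}$. By the edge correspondence just established, this is also a path in $\mathscr{G}'(\avuptrans{f,\mathcal{S}}{})$, which is therefore strongly connected. Theorem~\ref{theorem: strongly connected then eigenvector} now yields an eigenvector of $\avuptrans{f,\mathcal{S}}{}$ in $\reals{}^{|\mathcal{S}|}$, hence an (additive) eigenvalue, and Corollary~\ref{corollary:if eigenvalue then it is the only eigenvalue} guarantees that this eigenvalue is unique.

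The main obstacle I anticipate is the edge identification: one has to carefully unwind the definition of $\avuptrans{f,\mathcal{S}}{}$ through the zero-extension $(\cdot)^\uparrow$ to see that the limit defining edges in $\mathscr{G}'(\avuptrans{f,\mathcal{S}}{})$ is controlled by $\uptrans{}\indica{y}(x)$ alone (so in particular does not depend on $f$), and then combine this with closedness to keep the witness paths inside $\mathcal{S}$. Once this is done, the result is an immediate consequence of the machinery already developed in Sections~\ref{Sect: accessibility and topical maps}--\ref{Sect: A Sufficient Condition for Weak Ergodicity}.
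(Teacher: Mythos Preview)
Your proposal is correct and follows essentially the same approach as the paper's proof: identify the edges of $\mathscr{G}'(\avuptrans{f,\mathcal{S}}{})$ with those of $\mathscr{G}(\uptrans{})$ restricted to $\mathcal{S}$, use closedness of the communication class to keep witness paths inside $\mathcal{S}$, conclude strong connectedness, and apply Theorem~\ref{theorem: strongly connected then eigenvector} together with Corollary~\ref{corollary:if eigenvalue then it is the only eigenvalue}. The only cosmetic difference is that the paper routes the edge identification through $\mathscr{G}'(\avuptrans{f}{})$ and Corollary~\ref{corollary: graphs are identical}, whereas you compute $[\avuptrans{f,\mathcal{S}}{}(\alpha\indica{y})](x)=f(x)+\alpha\,\uptrans{}\indica{y}(x)$ directly.
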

\begin{proof}
Consider any two states $x$ and $y$ in $\mathcal{S}$.
Then, by the definition of $\mathscr{G}'(\avuptrans{f,\mathcal{S}}{})$, there is an edge from $x$ to $y$ in $\mathscr{G}'(\avuptrans{f,\mathcal{S}}{})$ if $\lim_{\alpha \to +\infty} \avuptrans{f,\mathcal{S}}{}(\alpha \indica{y}) (x) = +\infty$.
Moreover, by the definition of $\avuptrans{f,\mathcal{S}}{}$, we have that $\avuptrans{f,\mathcal{S}}{}(\alpha \indica{y}) = \big(\avuptrans{f}{}(\alpha \indica{y})^\uparrow\big)\vert_\mathcal{S} = \big(\avuptrans{f}{}(\alpha \indica{y})\big)\vert_\mathcal{S}$ for all $\alpha \in \reals{}$, where we used $\indica{y}$ to denote the indicator of $y$ in both $\setofgambles{}(\mathcal{S})$ and $\setofgambles{}(\statespace{})$ depending on the domain of the considered map. 
Hence, there is an edge from $x$ to $y$ in the graph $\mathscr{G}'(\avuptrans{f,\mathcal{S}}{})$ if and only if $\lim_{\alpha \to +\infty} \avuptrans{f}{}(\alpha \indica{y}) (x) = +\infty$ or, equivalently, if and only if there is an edge from $x$ to $y$ in the graph $\mathscr{G}'(\avuptrans{f}{})$.
So $\mathscr{G}'(\avuptrans{f,\mathcal{S}}{})$ is identical to the restriction of the graph $\mathscr{G}'(\avuptrans{f}{})$ to the vertices in $\mathcal{S}$.
Now, $x$ and $y$ are two states in the closed communication class $\mathcal{S}$ of $\mathscr{G}(\uptrans{})$, so we have that $x \to y$ in $\mathscr{G}(\uptrans{})$.
Moreover, the directed path from $x$ to $y$ remains within the closed class $\mathcal{S}$, because $x \not\to z$ for any $x \in \mathcal{S}$ and any $z \in \mathcal{S}^c$.
Then, since $\mathscr{G}(\uptrans{})$ is identical to $\mathscr{G}'(\avuptrans{f}{})$ because of Corollary~\ref{corollary: graphs are identical}, and since $\mathscr{G}'(\avuptrans{f,\mathcal{S}}{})$ is the restriction of $\mathscr{G}'(\avuptrans{f}{})$ to $\mathcal{S}$, we find that $x \to y$ in $\mathscr{G}'(\avuptrans{f,\mathcal{S}}{})$.
Since this holds for any two vertices in $\mathscr{G}'(\avuptrans{f,\mathcal{S}}{})$, it follows that $\mathscr{G}'(\avuptrans{f,\mathcal{S}}{})$ is strongly connected.
Because $\avuptrans{f,\mathcal{S}}{}$ is topical by Lemma~\ref{lemma: G_R is topical}, Theorem~\ref{theorem: strongly connected then eigenvector} then guarantees the existence of an (additive) eigenvector $h \in \setofgambles{}(\mathcal{S})$.
Let $\mu\in\reals$ be the corresponding eigenvalue.
By Corollary~\ref{corollary:if eigenvalue then it is the only eigenvalue}, this is the only eigenvalue of~$\avuptrans{f,\mathcal{S}}{}$.  
\end{proof}

\begin{lemma}\label{lemma: if top class then eigenvector in top class}
For any $\uptrans{}$ with a closed communication class $\mathcal{S}$, we have that 
\begin{align*}
\lim_{k\to+\infty}\tfrac{1}{k}[\avuptrans{f}{k}(0)](x) = \mu,
\end{align*}
for all $f\in\setofgambles{}(\statespace{})$ and all $x\in\mathcal{S}$, where $\mu$ is the unique eigenvalue of the map $\avuptrans{f,\mathcal{S}}{}$.
\end{lemma}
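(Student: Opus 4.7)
The plan is to exploit the fact that, for initial states inside the closed class $\mathcal{S}$, the iteration of $\avuptrans{f}{}$ is indistinguishable from that of its restriction $\avuptrans{f,\mathcal{S}}{}$, and then apply the topical-map machinery already established for $\avuptrans{f,\mathcal{S}}{}$.

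First I would invoke Lemma~\ref{lemma: G^k is equal to G_R^k}, applied to $h=0\in\setofgambles{}(\statespace{})$. Since $0\vert_{\mathcal{S}}$ is just the zero function on $\mathcal{S}$, this gives
\begin{align*}
[\avuptrans{f}{k}(0)](x) = [\avuptrans{f,\mathcal{S}}{k}(0)](x) \quad\text{for all } x\in\mathcal{S} \text{ and all } k\in\nats{}.
\end{align*}
So the claim reduces to showing that $\tfrac{1}{k}\avuptrans{f,\mathcal{S}}{k}(0)\to\mu$ uniformly on $\mathcal{S}$.

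Next I would use Lemma~\ref{lemma: T_f,S has an eigenvector}, which guarantees that $\avuptrans{f,\mathcal{S}}{}$ has $\mu$ as its unique eigenvalue; let $h\in\setofgambles{}(\mathcal{S})$ be an associated eigenvector, so that $\avuptrans{f,\mathcal{S}}{k}h = h+k\mu$ for every $k\in\natz{}$. Dividing by $k$ immediately yields $\lim_{k\to+\infty}\tfrac{1}{k}\avuptrans{f,\mathcal{S}}{k}h = \mu$ (as a constant function on $\mathcal{S}$).

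Finally, since $\avuptrans{f,\mathcal{S}}{}$ is a topical map on $\setofgambles{}(\mathcal{S})\cong\reals{}^{|\mathcal{S}|}$ by Lemma~\ref{lemma: G_R is topical}, Lemma~\ref{lemma: cycle time exists independently of starting point} lets me transfer the limit from the starting point $h$ to the starting point $0\in\setofgambles{}(\mathcal{S})$: the limit $\lim_{k\to+\infty}\tfrac{1}{k}\avuptrans{f,\mathcal{S}}{k}(0)$ exists and equals the same constant $\mu$. Combined with the first display, this gives $\lim_{k\to+\infty}\tfrac{1}{k}[\avuptrans{f}{k}(0)](x)=\mu$ for every $x\in\mathcal{S}$, as required.

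There is essentially no obstacle here beyond assembling the pieces correctly; the only point that deserves a moment's care is to notice that the zero function on $\statespace{}$ restricts to the zero function on $\mathcal{S}$, so that the single-step identity from Lemma~\ref{lemma: G^k is equal to G_R^k} really does identify the two trajectories being compared.
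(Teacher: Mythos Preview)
Your proof is correct and follows essentially the same approach as the paper's own argument: both combine Lemma~\ref{lemma: G^k is equal to G_R^k} (to identify the trajectory on $\mathcal{S}$ with that of the restricted map), Lemma~\ref{lemma: T_f,S has an eigenvector} (existence and uniqueness of the eigenvalue $\mu$), Lemma~\ref{lemma: G_R is topical} (topicality of $\avuptrans{f,\mathcal{S}}{}$), and Lemma~\ref{lemma: cycle time exists independently of starting point} (to transfer the limit from the eigenvector to the zero starting point). The only difference is the order in which you present these ingredients.
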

\begin{proof}
Consider any $\uptrans{}$ with a closed communication class $\mathcal{S}$ and fix any $f\in\setofgambles{}(\statespace{})$.
Lemma~\ref{lemma: T_f,S has an eigenvector} guarantees that $\avuptrans{f,\mathcal{S}}{}$ has a unique eigenvalue $\mu \in \reals$. 
Let $h \in \setofgambles{}(\mathcal{S})$ be a corresponding (not necessarily unique) eigenvector.
Then, in a similar way as argued below Lemma~\ref{lemma: cycle time exists independently of starting point}, we deduce that 
$\lim_{k \to +\infty} \avuptrans{f,\mathcal{S}}{k} (h) / k 
= \mu$.
Since $\avuptrans{f,\mathcal{S}}{}$ is topical due to Lemma~\ref{lemma: G_R is topical}, Lemma~\ref{lemma: cycle time exists independently of starting point} then also implies that $\smash{\lim_{k \to +\infty} \avuptrans{f,\mathcal{S}}{k} (0 \vert_{\mathcal{S}}) / k = \mu}$, with $0$ the zero vector in $\setofgambles{}(\statespace{})$.
Moreover, for all $x \in \mathcal{S}$ and all $k \in \natz{}$, due to Lemma~\ref{lemma: G^k is equal to G_R^k}, we have that
$\tfrac{1}{k} \big[ \, \avuptrans{f}{k} (0) \big](x) 
= \tfrac{1}{k} \big[ \, \avuptrans{f,\mathcal{S}}{k} (0\vert_{\mathcal{S}}) \big] (x)$.
This allows us to conclude that $\smash{\lim_{k \to +\infty} \tfrac{1}{k} \big[ \, \avuptrans{f}{k} (0) \big](x) = \mu}$ for all $x \in \mathcal{S}$, where $\mu$ is the unique eigenvalue of the map $\avuptrans{f,\mathcal{S}}{}$.
\end{proof}

\begin{lemma}\label{lemma: average does not depend on f outside S}
For any $\uptrans{}$ with a closed class $\mathcal{S}$, we have that\/ $[\avuptrans{f}{k}(0)] \, \indica{\mathcal{S}} = [\avuptrans{g}{k}(0)] \, \indica{\mathcal{S}}$ for any two $f,g \in \setofgambles{}(\statespace{})$ such that $g \indica{\mathcal{S}} = f \indica{\mathcal{S}}$ and all $k \in \nats{}$.
\end{lemma}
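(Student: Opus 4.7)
The plan is to leverage Lemma~\ref{lemma: G^k is equal to G_R^k}, which already tells us that the iterates of $\avuptrans{f}{}$, once restricted to the closed class $\mathcal{S}$, coincide with the iterates of the ``confined'' operator $\avuptrans{f,\mathcal{S}}{}$ on $\setofgambles{}(\mathcal{S})$. The main observation driving the proof is that $\avuptrans{f,\mathcal{S}}{}$ depends on the extraneous values of $f$ only in a trivial way: from $\avuptrans{f,\mathcal{S}}{}h = \bigl(\avuptrans{f}{} h^\uparrow\bigr)\vert_{\mathcal{S}} = \bigl(f + \uptrans{} h^\uparrow\bigr)\vert_{\mathcal{S}} = f\vert_{\mathcal{S}} + \bigl(\uptrans{} h^\uparrow\bigr)\vert_{\mathcal{S}}$, the map only ``sees'' $f$ through its restriction $f\vert_{\mathcal{S}}$.

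First, I would rephrase the hypothesis: noting that $g\indica{\mathcal{S}} = f\indica{\mathcal{S}}$ is equivalent to $f(x) = g(x)$ for all $x\in\mathcal{S}$, i.e.\ to $f\vert_{\mathcal{S}} = g\vert_{\mathcal{S}}$. Second, applying the display above both to $f$ and to $g$, this yields $\avuptrans{f,\mathcal{S}}{}h = \avuptrans{g,\mathcal{S}}{}h$ for every $h\in\setofgambles{}(\mathcal{S})$, so the two confined maps on $\setofgambles{}(\mathcal{S})$ are literally identical, and therefore so are all of their iterates: $\avuptrans{f,\mathcal{S}}{k} = \avuptrans{g,\mathcal{S}}{k}$ for all $k\in\nats{}$.

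Third, I would invoke Lemma~\ref{lemma: G^k is equal to G_R^k} with $h$ chosen as the zero vector $0\in\setofgambles{}(\statespace{})$, which gives
\begin{equation*}
\bigl(\avuptrans{f}{k}(0)\bigr)\vert_{\mathcal{S}} = \avuptrans{f,\mathcal{S}}{k}(0\vert_{\mathcal{S}}) = \avuptrans{g,\mathcal{S}}{k}(0\vert_{\mathcal{S}}) = \bigl(\avuptrans{g}{k}(0)\bigr)\vert_{\mathcal{S}}
\end{equation*}
for all $k\in\nats{}$. Finally, I would translate this equality of restrictions back to the original formulation by multiplying both sides by $\indica{\mathcal{S}}$ (extending by $0$ off of $\mathcal{S}$ on both sides), obtaining $[\avuptrans{f}{k}(0)]\indica{\mathcal{S}} = [\avuptrans{g}{k}(0)]\indica{\mathcal{S}}$ as desired.

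There is really no hard step here---everything follows mechanically once the correct two-line observation about $\avuptrans{f,\mathcal{S}}{}$ is made. The only minor pitfall is to keep the notation for restriction, zero-extension, and multiplication by $\indica{\mathcal{S}}$ straight; an alternative, equally short route bypasses Lemma~\ref{lemma: G^k is equal to G_R^k} and proceeds by induction on $k$, using Lemma~\ref{lemma: time average in maximal class only depends on value of f in maximal class} to replace $\avuptrans{f}{k}(0)$ by $\avuptrans{f}{k}(0)\indica{\mathcal{S}}$ inside one application of $\avuptrans{f}{}$ at each step and then using $f(x) = g(x)$ on $\mathcal{S}$ to swap $f$ for $g$, but this just reproves the content of Lemma~\ref{lemma: G^k is equal to G_R^k} in disguise, so the approach above is cleaner.
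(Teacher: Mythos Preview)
Your proof is correct. It differs from the paper's argument in organisation rather than in substance: the paper proves this lemma by the direct induction on $k$ that you sketch at the end of your proposal---using Lemma~\ref{lemma: time average in maximal class only depends on value of f in maximal class} at each step to replace the previous iterate by its $\indica{\mathcal{S}}$-truncated version inside one application of $\avuptrans{f}{}$, and then swapping $f$ for $g$ at the single evaluated point in $\mathcal{S}$. Your main route instead invokes Lemma~\ref{lemma: G^k is equal to G_R^k}, observes that the confined operator $\avuptrans{f,\mathcal{S}}{}$ depends on $f$ only through $f\vert_{\mathcal{S}}$, and reads off the conclusion in one line. Since Lemma~\ref{lemma: G^k is equal to G_R^k} is itself proved by exactly that induction, the two arguments have the same content; your version just avoids repeating the induction and is a bit tidier, while the paper's version is self-contained at the cost of re-deriving what Lemma~\ref{lemma: G^k is equal to G_R^k} already packages.
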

\begin{proof}
Let $\mathcal{S}$ be a closed class of $\uptrans{}$.
Fix any two $f,g \in \setofgambles{}(\statespace{})$ such that $g \indica{\mathcal{S}} = f \indica{\mathcal{S}}$ and let $\smash{\avuptrans{f}{}(\cdot)} \coloneqq f + \uptrans{}(\cdot)$ and $\smash{\avuptrans{g}{}(\cdot)} \coloneqq g + \uptrans{}(\cdot)$ as before.
To prove the statement, we will use an induction argument in $k \in \nats{}$.
That the statement holds for $k = 1$ is trivial because, due to \ref{transcoherence: bounds}, we have that $\smash{[\avuptrans{f}{}(0)]} = f$ and $\smash{[\avuptrans{g}{}(0)]} = g$.
Now suppose that the statement holds for $k = i$ with $i \in\nats$.
Then, by assumption and by Equation~\eqref{Eq: recursive expression 2}, we have that $\tilde{m}_{f,i-1} \indica{\mathcal{S}}
= [\avuptrans{f}{i}(0)] \, \indica{\mathcal{S}} 
= [\avuptrans{g}{i}(0)] \, \indica{\mathcal{S}}
 = \tilde{m}_{g,i-1} \indica{\mathcal{S}}$.
This allows us to write that, for any $x \in \mathcal{S}$, 
\begin{align}\label{Eq: lemma: average does not depend on f outside S}
\tilde{m}_{f,i}(x) 
= \avuptrans{f}{} \tilde{m}_{f,i-1} (x)
= \avuptrans{f}{} (\tilde{m}_{f,i-1} \indica{\mathcal{S}}) (x)
&= \avuptrans{f}{} (\tilde{m}_{g,i-1} \indica{\mathcal{S}}) (x) \nonumber \\
&= \avuptrans{f}{} \tilde{m}_{g,i-1}(x), 
\end{align}
where the second and last step follow from Lemma~\ref{lemma: time average in maximal class only depends on value of f in maximal class}.
Moreover, note that, for any $x \in \mathcal{S}$,
\begin{align*}
\avuptrans{f}{} \tilde{m}_{g,i-1}(x)
= (f +\uptrans{} \tilde{m}_{g,i-1})(x)
= f(x) +\uptrans{} \tilde{m}_{g,i-1}(x)
&= g(x) +\uptrans{} \tilde{m}_{g,i-1}(x) \\
&= \avuptrans{g}{} \tilde{m}_{g,i-1}(x) \\
&= \tilde{m}_{g,i}(x)
\end{align*}
where the third step follows from $f \indica{\mathcal{S}} = g \indica{\mathcal{S}}$.
Hence, recalling Equation~\eqref{Eq: lemma: average does not depend on f outside S}, we have that $\tilde{m}_{f,i}(x) = \tilde{m}_{g,i}(x)$ for all $x \in \mathcal{S}$, which, due to Equation~\eqref{Eq: recursive expression 2}, implies that 
$[\avuptrans{f}{i+1}(0)] \, \indica{\mathcal{S}} 
= \tilde{m}_{f,i} \indica{\mathcal{S}} = \tilde{m}_{g,i} \indica{\mathcal{S}}
= [\avuptrans{g}{i+1}(0)] \, \indica{\mathcal{S}}$ and therefore that the statement holds for $k=i+1$.
\end{proof}

\begin{proofof}{Proposition~\ref{proposition: if top class then eigenvector in top class}.}
Consider any closed communication class $\mathcal{S}$ of $\uptrans{}$, any $f\in\setofgambles{}(\statespace{})$ and any $x \in \mathcal{S}$.
It follows from Lemma~\ref{lemma: average does not depend on f outside S} and Equation~\eqref{Eq: recursive expression 2} that, for any $k \in \nats{}$, 
\begin{align*}
\upprev[\mathrm{av},k](f \vert x) 
= \tfrac{1}{k} [\avuptrans{f}{k}(0)] (x) 
= \tfrac{1}{k} [\avuptrans{f \indica{\mathcal{S}}}{k}(0)]  (x) 
= \upprev[\mathrm{av},k](f \indica{\mathcal{S}} \vert x).
\end{align*}
Moreover, by Lemma~\ref{lemma: if top class then eigenvector in top class}, we have that $\lim_{k\to+\infty} \tfrac{1}{k} [\avuptrans{f}{k}(0)](x) = \mu$ for some $\mu\in\reals{}$ and all $x\in\mathcal{S}$, which implies that the upper expectation $\upprev[\mathrm{av},k](f \vert x)$ converges to a constant $\mu$ that does not depend on the specific state $x \in \mathcal{S}$.
\end{proofof}

\section{Proof of Proposition~\ref{proposition: if top class absorbing then weakly ergodic}}

\begin{lemma}\label{lemma: Rc decreases}
For any $\uptrans{}$ with a closed class $\mathcal{S}$, the function $\uptrans{}^k \indica{\mathcal{S}^c}$ is non-increasing in $k \in \nats{}$. 
\end{lemma}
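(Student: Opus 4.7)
The plan is to reduce the statement to the single base inequality $\uptrans{}\indica{\mathcal{S}^c} \leq \indica{\mathcal{S}^c}$, since iterated monotonicity will then immediately chain this into the desired non-increasing property. Concretely, once we have $\uptrans{}\indica{\mathcal{S}^c} \leq \indica{\mathcal{S}^c}$, an application of $\uptrans{}^k$ to both sides together with monotonicity [\ref{iteratedcoherence: monotonicity}] yields
\begin{equation*}
\uptrans{}^{k+1}\indica{\mathcal{S}^c} = \uptrans{}^k(\uptrans{}\indica{\mathcal{S}^c}) \leq \uptrans{}^k \indica{\mathcal{S}^c}
\end{equation*}
for every $k \in \nats{}$, which is exactly what we need.

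So the only real work lies in proving the base inequality $\uptrans{}\indica{\mathcal{S}^c}(x) \leq \indica{\mathcal{S}^c}(x)$ for every $x \in \statespace{}$, and I would do this by splitting on whether $x \in \mathcal{S}$ or $x \in \mathcal{S}^c$. For $x \in \mathcal{S}$, Lemma~\ref{lemma: not from S to Sc} (applied with $k=1$) gives $\uptrans{}\indica{\mathcal{S}^c}(x) = 0 = \indica{\mathcal{S}^c}(x)$. For $x \in \mathcal{S}^c$, boundedness [\ref{transcoherence: bounds}] gives $\uptrans{}\indica{\mathcal{S}^c}(x) \leq \max \indica{\mathcal{S}^c} = 1 = \indica{\mathcal{S}^c}(x)$. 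Both cases combined yield the required pointwise inequality.

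There is no real obstacle here; the result is an elementary consequence of Lemma~\ref{lemma: not from S to Sc} together with the coherence properties of $\uptrans{}$. The only subtlety worth noting is that the argument does \emph{not} require $\mathcal{S}$ to be a communication class---closedness alone suffices, exactly as in the hypothesis of Lemma~\ref{lemma: not from S to Sc}.
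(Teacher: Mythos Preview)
Your proof is correct and follows essentially the same approach as the paper's: both establish the base inequality $\uptrans{}\indica{\mathcal{S}^c} \leq \indica{\mathcal{S}^c}$ via Lemma~\ref{lemma: not from S to Sc} on $\mathcal{S}$ and boundedness~[\ref{transcoherence: bounds}] on $\mathcal{S}^c$, then apply iterated monotonicity~[\ref{iteratedcoherence: monotonicity}]. The only difference is presentational---you make the case split explicit, whereas the paper argues slightly more compactly from $0 \leq \uptrans{}\indica{\mathcal{S}^c} \leq 1$ together with $\uptrans{}\indica{\mathcal{S}^c}(x) = 0$ for $x \in \mathcal{S}$.
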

\begin{proof}
From Lemma~\ref{lemma: not from S to Sc}, it follows that $\uptrans{} \indica{\mathcal{S}^c} (x) = 0$ for all $x \in \mathcal{S}$.
Since moreover $0 \leq \uptrans{} \indica{\mathcal{S}^c} \leq 1$ by \ref{transcoherence: bounds}, it follows that $\indica{\mathcal{S}^c}\geq \uptrans{} \indica{\mathcal{S}^c}$.
Then, using \ref{iteratedcoherence: monotonicity}, we deduce that 
$\uptrans{}^{k} \indica{\mathcal{S}^c} \geq \uptrans{}^{k+1} \indica{\mathcal{S}^c}$ for all $k \in \nats{}$.
\end{proof}

\begin{lemma}\label{lemma: limit of Rc converges to zero}
For any $\uptrans{}$ with an absorbing closed class $\mathcal{S}$, we have that \/ $\lim_{k \to +\infty} \uptrans{}^k \indica{\mathcal{S}^c} = 0$.
\end{lemma}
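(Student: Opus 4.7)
The plan is to combine the monotonicity already established in Lemma~\ref{lemma: Rc decreases} with the absorbing property to obtain a uniform contraction factor after finitely many steps, and then iterate.

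First I would use the finiteness of $\mathcal{S}^c$. For each $x \in \mathcal{S}^c$, absorbingness [\ref{property: absorbing}] yields some $k_x \in \nats$ with $\uptrans{}^{k_x}\indica{\mathcal{S}^c}(x) < 1$. Setting $k^* \coloneqq \max_{x \in \mathcal{S}^c} k_x$, Lemma~\ref{lemma: Rc decreases} implies $\uptrans{}^{k^*}\indica{\mathcal{S}^c}(x) \leq \uptrans{}^{k_x}\indica{\mathcal{S}^c}(x) < 1$ for every $x \in \mathcal{S}^c$. Hence
\begin{align*}
\alpha \coloneqq \max_{x \in \mathcal{S}^c} \uptrans{}^{k^*}\indica{\mathcal{S}^c}(x) < 1,
\end{align*}
and $\alpha \geq 0$ because $\uptrans{}^{k^*}\indica{\mathcal{S}^c} \geq 0$ by \ref{iteratedcoherence: bounds}. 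Since additionally $\uptrans{}^{k^*}\indica{\mathcal{S}^c}(x) = 0$ for $x \in \mathcal{S}$ by Lemma~\ref{lemma: not from S to Sc} (using Lemma~\ref{lemma: time average in maximal class only depends on value of f in maximal class} style reasoning, but actually Lemma~\ref{lemma: not from S to Sc} gives it directly), we conclude $\uptrans{}^{k^*}\indica{\mathcal{S}^c} \leq \alpha \, \indica{\mathcal{S}^c}$.

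Next I would iterate this contraction. Applying $\uptrans{}^{k^*}$ to both sides and invoking monotonicity [\ref{iteratedcoherence: monotonicity}] together with non-negative homogeneity [\ref{iteratedcoherence: homogeneity}] (valid since $\alpha \geq 0$), we obtain
\begin{align*}
\uptrans{}^{2k^*}\indica{\mathcal{S}^c} \leq \uptrans{}^{k^*}(\alpha \, \indica{\mathcal{S}^c}) = \alpha \, \uptrans{}^{k^*}\indica{\mathcal{S}^c} \leq \alpha^2 \, \indica{\mathcal{S}^c}.
\end{align*}
A straightforward induction on $n \in \nats$ then gives $\uptrans{}^{nk^*}\indica{\mathcal{S}^c} \leq \alpha^n \, \indica{\mathcal{S}^c}$ for all $n \in \nats$. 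Since $0 \leq \alpha < 1$, the right-hand side tends to $0$ as $n \to \infty$, and since $\uptrans{}^k\indica{\mathcal{S}^c} \geq 0$ by \ref{iteratedcoherence: bounds}, we conclude $\lim_{n \to +\infty} \uptrans{}^{nk^*}\indica{\mathcal{S}^c} = 0$.

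Finally, I would promote this convergence along the subsequence $\{nk^*\}_{n\in\nats}$ to the full sequence by invoking Lemma~\ref{lemma: Rc decreases} once more: since $\uptrans{}^{k}\indica{\mathcal{S}^c}$ is non-increasing in $k$ and is bounded below by $0$, any convergent subsequence determines the limit of the whole sequence. There is no real obstacle here; the only mild subtlety is ensuring $\alpha \geq 0$ so that non-negative homogeneity can be applied, which is immediate from the upper bound axioms.
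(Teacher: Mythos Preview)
Your proposal is correct and follows essentially the same approach as the paper: extract a uniform contraction factor $\alpha<1$ at a common step $k^*$ via absorbingness, Lemma~\ref{lemma: Rc decreases} and Lemma~\ref{lemma: not from S to Sc}, iterate using \ref{iteratedcoherence: monotonicity} and \ref{iteratedcoherence: homogeneity} to obtain geometric decay along multiples of $k^*$, and then upgrade to the full sequence via the monotonicity of Lemma~\ref{lemma: Rc decreases}. The only cosmetic difference is that the paper explicitly disposes of the trivial case $\mathcal{S}^c=\emptyset$ before taking $k^*=\max_{x\in\mathcal{S}^c}k_x$, which you should also mention so that the maximum is well defined.
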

\begin{proof}
The statement holds if $\mathcal{S}^c = \emptyset$ because then $\indica{\mathcal{S}^c} = 0$, which by \ref{iteratedcoherence: bounds} implies that $\uptrans{}^k \indica{\mathcal{S}^c} = 0$ for all $k \in \natz{}$.
So assume that $\mathcal{S}^c$ is non-empty.
Since $\mathcal{S}$ is absorbing [\ref{property: absorbing}], there is, for any $x \in \mathcal{S}^c$, an index $k_x \in \nats{}$ such that $\uptrans{}^{k_x} \indica{\mathcal{S}^c} (x) < 1$.
By Lemma~\ref{lemma: Rc decreases} and the fact that $\mathcal{S}$ is closed, we then also have that $\uptrans{}^{k} \indica{\mathcal{S}^c} (x) < 1$ for all $x \in \mathcal{S}^c$ and all $k \geq k_x$.
Hence, for $k \coloneqq \max_{x \in \mathcal{S}^c} k_x \in \nats{}$, 
we have that $\uptrans{}^{k} \indica{\mathcal{S}^c} (x) < 1$ for all $x \in \mathcal{S}^c$.
Now let $\alpha \coloneqq \max_{x \in \mathcal{S}^c} \uptrans{}^{k} \indica{\mathcal{S}^c} (x) < 1$.
The set $\mathcal{S}$ is a closed class, so it follows from Lemma~\ref{lemma: not from S to Sc} that $\uptrans{}^{k} \indica{\mathcal{S}^c}(x) = 0$ for all $x \in \mathcal{S}$.
Since moreover $\uptrans{}^{k} \indica{\mathcal{S}^c}(x) \leq \alpha$ for all $x \in \mathcal{S}^c$, we infer that $\uptrans{}^{k} \indica{\mathcal{S}^c} \leq \alpha \indica{\mathcal{S}^c}$.
Then, using \ref{iteratedcoherence: monotonicity}, \ref{iteratedcoherence: homogeneity} and the non-negativity [\ref{iteratedcoherence: bounds}] of $\alpha$, it follows that $\uptrans{}^{2 k} \indica{\mathcal{S}^c} \leq \alpha^2 \indica{\mathcal{S}^c}$.
Repeating this argument leads us to conclude that $\uptrans{}^{\ell k} \indica{\mathcal{S}^c} \leq \alpha^\ell \indica{\mathcal{S}^c}$ for all $\ell \in \nats{}$.
Since $\alpha$ is a non-negative real such that $\alpha < 1$, 
this implies that $\lim_{\ell \to +\infty} \uptrans{}^{\ell k} \indica{\mathcal{S}^c} \leq 0$ and therefore by \ref{iteratedcoherence: bounds} that $\lim_{\ell \to +\infty} \uptrans{}^{\ell k} \indica{\mathcal{S}^c} = 0$.
Then also $\lim_{k \to +\infty} \uptrans{}^{k} \indica{\mathcal{S}^c} = 0$ because $\uptrans{}^{k} \indica{\mathcal{S}^c}$ is non-increasing according to Lemma~\ref{lemma: Rc decreases}.
\end{proof}

\begin{lemma}\label{lemma: absorbing implies that T^k h only depends on h in R}
Consider any $\uptrans{}$ with an absorbing closed class $\mathcal{S}$.
Then, for any $\epsilon > 0$, there is a $k_1 \in \natz{}$ such that $\supnorm{\uptrans{}^k h -  \uptrans{}^k (h \indica{\mathcal{S}})} \leq \supnorm{h} \epsilon$ for all $k \geq k_1$ and all $h \in \setofgambles{}(\statespace{})$.
\end{lemma}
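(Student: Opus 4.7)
The plan is to reduce the statement to a uniform estimate on $\uptrans{}^k \indica{\mathcal{S}^c}$, which we already know tends to zero thanks to Lemma~\ref{lemma: limit of Rc converges to zero}.

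First I would apply mixed sub-additivity [\ref{iteratedcoherence: mixed additivity}] in both directions. Since $h - h\indica{\mathcal{S}} = h\indica{\mathcal{S}^c}$, this gives the two bounds
\begin{align*}
\uptrans{}^k h - \uptrans{}^k (h\indica{\mathcal{S}}) &\leq \uptrans{}^k (h\indica{\mathcal{S}^c}), \\
\uptrans{}^k (h\indica{\mathcal{S}}) - \uptrans{}^k h &\leq \uptrans{}^k (-h\indica{\mathcal{S}^c}).
\end{align*}
Next I would use the fact that $-\supnorm{h}\indica{\mathcal{S}^c} \leq \pm h\indica{\mathcal{S}^c} \leq \supnorm{h}\indica{\mathcal{S}^c}$ together with monotonicity [\ref{iteratedcoherence: monotonicity}] and non-negative homogeneity [\ref{iteratedcoherence: homogeneity}] of $\uptrans{}^k$ to obtain $\uptrans{}^k(\pm h\indica{\mathcal{S}^c}) \leq \supnorm{h}\, \uptrans{}^k \indica{\mathcal{S}^c}$. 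Combining these observations and taking the supremum norm yields the key uniform inequality
\begin{align*}
\supnorm{\uptrans{}^k h - \uptrans{}^k (h\indica{\mathcal{S}})} \leq \supnorm{h} \cdot \supnorm{\uptrans{}^k \indica{\mathcal{S}^c}},
\end{align*}
valid for every $h \in \setofgambles{}(\statespace{})$ and every $k \in \natz{}$.

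Finally, since $\mathcal{S}$ is absorbing and closed, Lemma~\ref{lemma: limit of Rc converges to zero} gives $\lim_{k\to+\infty} \uptrans{}^k \indica{\mathcal{S}^c} = 0$, and because $\statespace{}$ is finite this convergence is in supremum norm. Hence for any $\epsilon > 0$ one can pick $k_1 \in \natz{}$ such that $\supnorm{\uptrans{}^k \indica{\mathcal{S}^c}} \leq \epsilon$ for all $k \geq k_1$, and the claimed bound follows.

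There is no genuine obstacle here; the only thing to be careful about is applying \ref{iteratedcoherence: mixed additivity} in the correct direction in each of the two inequalities, and ensuring that the scalar $\supnorm{h}$ can be pulled out of $\uptrans{}^k$ via \ref{iteratedcoherence: homogeneity} (which requires non-negativity, so the split into $\pm h\indica{\mathcal{S}^c}$ bounded above by $\supnorm{h}\indica{\mathcal{S}^c}$ is essential).
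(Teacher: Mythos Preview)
Your proposal is correct and follows essentially the same approach as the paper's proof: both reduce the problem to bounding $\uptrans{}^k\indica{\mathcal{S}^c}$ via Lemma~\ref{lemma: limit of Rc converges to zero}, using mixed sub-additivity, monotonicity, and non-negative homogeneity of $\uptrans{}^k$. The only cosmetic difference is the order of operations: the paper first sandwiches $h$ between $h\indica{\mathcal{S}}\pm\supnorm{h}\indica{\mathcal{S}^c}$ and then applies \ref{iteratedcoherence: mixed additivity}, \ref{iteratedcoherence: monotonicity}, \ref{iteratedcoherence: subadditivity}, whereas you first apply \ref{iteratedcoherence: mixed additivity} to $h$ and $h\indica{\mathcal{S}}$ and then bound $\uptrans{}^k(\pm h\indica{\mathcal{S}^c})$.
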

\begin{proof}
Fix any $\epsilon > 0$.
Because of Lemma~\ref{lemma: limit of Rc converges to zero}, we have that $\lim_{k \to +\infty} \uptrans{}^k \indica{\mathcal{S}^c} = 0$.
Since $\statespace{}$ is finite, this implies that there is an index $k_1 \in \natz{}$ such that $0 \leq \uptrans{}^k \indica{\mathcal{S}^c} \leq \epsilon$ for all $k \geq k_1$, where we also used the non-negativity [\ref{iteratedcoherence: bounds}] of $\uptrans{}^k \indica{\mathcal{S}^c}$.
Hence, multiplying by $\supnorm{h}$ for any $h \in \setofgambles{}(\statespace{})$ and using non-negative homogeneity [\ref{iteratedcoherence: homogeneity}], allows us to write that $0 \leq \uptrans{}^k (\supnorm{h} \indica{\mathcal{S}^c}) \leq \supnorm{h} \epsilon$ for all $k \geq k_1$ and all $h \in \setofgambles{}(\statespace{})$.
Moreover, note that
\begin{align*}
h \indica{\mathcal{S}} - \supnorm{h} \indica{\mathcal{S}^c} \leq h \leq h \indica{\mathcal{S}} + \supnorm{h} \indica{\mathcal{S}^c} \text{ for all } h \in \setofgambles{}(\statespace{}).
\end{align*}
Then, by subsequently applying \ref{iteratedcoherence: mixed additivity}, \ref{iteratedcoherence: monotonicity} and \ref{iteratedcoherence: subadditivity}, we find that 
\begin{align}\label{Eq: proposition: if top class absorbing then weakly ergodic 2}
\uptrans{}^k (h \indica{\mathcal{S}}) - \uptrans{}^k (\supnorm{h} \indica{\mathcal{S}^c}) 
&\leq \uptrans{}^k (h \indica{\mathcal{S}} - \supnorm{h} \indica{\mathcal{S}^c}) \nonumber \\
&\leq \uptrans{}^k h 
\leq \uptrans{}^k (h \indica{\mathcal{S}}) + \uptrans{}^k (\supnorm{h} \indica{\mathcal{S}^c}),
\end{align}
for all $h \in \setofgambles{}(\statespace{})$ and all $k \in \natz{}$.
Hence, recalling that $0 \leq \uptrans{}^k (\supnorm{h} \indica{\mathcal{S}^c}) \leq \supnorm{h} \epsilon$ for all $k \geq k_1$ and all $h \in \setofgambles{}(\statespace{})$, we indeed find that $\supnorm{\uptrans{}^k h - \uptrans{}^k (h \indica{\mathcal{S}})} \leq \supnorm{h} \epsilon$ for all $k \geq k_1$ and all $h \in \setofgambles{}(\statespace{})$.
\end{proof}

For notational convenience, we will henceforth use $\overline{m}_{f,k} \coloneqq \tfrac{1}{k} \tilde{m}_{f,k}$ for any $f\in\setofgambles{}(\statespace{})$ and any $k \in \nats{}$, to denote the function in $\setofgambles{}(\statespace{})$ that takes the value $\overline{m}_{f,k}(x) = \tfrac{1}{k} \tilde{m}_{f,k}(x) = \upprev[\mathrm{av},k](f \vert x)$ in $x \in \statespace{}$. 

\begin{lemma}\label{lemma: bounds average}
$\min f \leq \overline{m}_{f,k}(x) = \upprev[\mathrm{av},k](f \vert x) \leq \max f$ for all $f\in\setofgambles{}(\statespace{})$, all $k \in \nats{}$ and all $x\in\statespace{}$.
\end{lemma}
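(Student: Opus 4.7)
The equality $\overline{m}_{f,k}(x) = \upprev[\mathrm{av},k](f\vert x)$ is immediate from the definition $\overline{m}_{f,k} \coloneqq \tfrac{1}{k}\tilde{m}_{f,k}$ and the identity $\upprev[\mathrm{av},k](f\vert x) = \tfrac{1}{k}\tilde{m}_{f,k}(x)$ established just below Equation~\eqref{Eq: recursive expression}. So the substantive content is the pair of inequalities $\min f \leq \overline{m}_{f,k}(x) \leq \max f$, and these are equivalent to $k \min f \leq \tilde{m}_{f,k}(x) \leq k \max f$ for every $k \in \nats{}$ and every $x \in \statespace{}$.

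The plan is a short induction on $k$ using the recursive expression \eqref{Eq: recursive expression}, namely $\tilde{m}_{f,k} = f + \uptrans{}\tilde{m}_{f,k-1}$, together with the boundedness property \ref{transcoherence: bounds}. For the base case $k=1$, by definition $\tilde{m}_{f,1} = f$, so the bounds are trivial. For the inductive step, assume $k \min f \leq \tilde{m}_{f,k} \leq k \max f$ point-wise. Then $\min \tilde{m}_{f,k} \geq k \min f$ and $\max \tilde{m}_{f,k} \leq k \max f$, so by \ref{transcoherence: bounds} applied to $\tilde{m}_{f,k}$ we obtain $k \min f \leq \uptrans{}\tilde{m}_{f,k} \leq k \max f$ point-wise. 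Adding $f$ on both sides (and using $\min f \leq f \leq \max f$) yields $(k+1)\min f \leq \tilde{m}_{f,k+1} \leq (k+1)\max f$, which closes the induction.

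There is no real obstacle here; the lemma simply records the elementary fact that the iterates of $\avuptrans{f}{}$ started at $0$ grow linearly with rates trapped between $\min f$ and $\max f$, and the argument just unpacks \eqref{Eq: recursive expression} together with \ref{transcoherence: bounds}.
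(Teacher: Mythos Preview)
Your proof is correct and follows essentially the same route as the paper's: both reduce to showing $k\min f \leq \tilde{m}_{f,k} \leq k\max f$ by induction on $k$, with the inductive step handled via the recursion \eqref{Eq: recursive expression} and the boundedness property \ref{transcoherence: bounds}. The only cosmetic difference is that the paper invokes monotonicity \ref{transcoherence: monotonicity} together with \ref{transcoherence: bounds} on the constant bounds, whereas you apply \ref{transcoherence: bounds} directly to $\tilde{m}_{f,k}$; the two are equivalent.
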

\begin{proof}
Fix any $f\in\setofgambles{}(\statespace{})$.
Due to the definition of $\overline{m}_{f,k}$, it clearly suffices to prove that $k \min f \leq \tilde{m}_{f,k} \leq k \max f$ for all $k \in \nats{}$.
We do this by induction.
For $k=1$, the statement holds trivially because $\tilde{m}_{f,1} = f$ and therefore $\min f \leq \tilde{m}_{f,1} \leq \max f$.
Now suppose that the statement holds for $k = i$ with $i \in \nats{}$.
Then $i \min f \leq \tilde{m}_{f,i} \leq i \max f$.
It then follows from \ref{transcoherence: bounds} and \ref{transcoherence: monotonicity} that 
\begin{align*}
i \min f = \uptrans{}(i \min f) 
\leq \uptrans{} \tilde{m}_{f,i} 
\leq \uptrans{}(i \max f)
= i \max f.
\end{align*}
By adding $f$ to all the terms, we find that 
$(i+1) \min f 
\leq f + \uptrans{} \tilde{m}_{f,i}  
\leq (i+1) \max f$.
Since $f + \uptrans{} \tilde{m}_{f,i} = \avuptrans{f}{} \tilde{m}_{f,i}
= \tilde{m}_{f,i+1}$ by Equation~\eqref{Eq: recursive expression}, it follows that the statement holds for $k=i+1$ as well.
\end{proof}

\begin{lemma}\label{lemma: T^kh - T_f^k h}
$\supnorm{\uptrans{}^k h - \avuptrans{f}{k} h } \leq k \supnorm{f}$ for all $f\in\setofgambles{}(\statespace{})$, all $h \in \setofgambles{}(\statespace{})$ and all $k \in \natz{}$.
\end{lemma}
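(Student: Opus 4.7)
The plan is to prove this by induction on $k$, using only the basic properties of upper transition operators, namely mixed sub-additivity [\ref{transcoherence: mixed additivity}] and boundedness [\ref{transcoherence: bounds}]. The intuition is that at each step the iterate $\avuptrans{f}{}$ simply adds an $f$ to what $\uptrans{}$ does, so over $k$ steps the cumulative discrepancy cannot grow faster than $k\supnorm{f}$; the only subtlety is that $\uptrans{}$ is non-linear, so the standard ``add errors up'' bookkeeping must be replaced by one that routes each step's error through the sub-additivity/boundedness axioms.

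The base case $k=0$ is trivial (both sides equal $h$), and $k=1$ is immediate since $\uptrans{}h - \avuptrans{f}{}h = \uptrans{}h - (f+\uptrans{}h) = -f$. For the inductive step, I would write
\begin{equation*}
\uptrans{}^{k+1} h - \avuptrans{f}{k+1} h \;=\; \uptrans{}(\uptrans{}^k h) - \uptrans{}(\avuptrans{f}{k} h) - f,
\end{equation*}
and bound the first two terms from both sides. By \ref{transcoherence: mixed additivity} and \ref{transcoherence: bounds},
\begin{equation*}
\uptrans{}(\uptrans{}^k h) - \uptrans{}(\avuptrans{f}{k} h) \;\leq\; \uptrans{}\bigl(\uptrans{}^k h - \avuptrans{f}{k} h\bigr) \;\leq\; \max\bigl(\uptrans{}^k h - \avuptrans{f}{k} h\bigr) \;\leq\; \supnorm{\uptrans{}^k h - \avuptrans{f}{k} h} \;\leq\; k\supnorm{f},
\end{equation*}
where the last inequality is the induction hypothesis. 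By symmetry (swapping the roles of the two arguments in \ref{transcoherence: mixed additivity}) the same bound holds in absolute value, so we obtain $\supnorm{\uptrans{}(\uptrans{}^k h) - \uptrans{}(\avuptrans{f}{k} h)} \leq k\supnorm{f}$. The triangle inequality together with $\supnorm{-f}=\supnorm{f}$ then yields $\supnorm{\uptrans{}^{k+1} h - \avuptrans{f}{k+1} h} \leq (k+1)\supnorm{f}$, closing the induction.

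There is no real obstacle here; the only thing to be slightly careful about is invoking \ref{transcoherence: mixed additivity} in both directions to turn a one-sided bound into a supremum-norm bound, and remembering that the added $-f$ term contributes exactly one extra $\supnorm{f}$ per iteration, which is what produces the linear growth in $k$.
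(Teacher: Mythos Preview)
Your proof is correct and follows essentially the same inductive strategy as the paper. The only cosmetic difference is that the paper establishes the two-sided sandwich $-k\supnorm{f}+\uptrans{}^k h \leq \avuptrans{f}{k} h \leq k\supnorm{f}+\uptrans{}^k h$ directly by applying monotonicity \ref{topical: monotonicity} and constant additivity \ref{topical: constant addivity} of $\avuptrans{f}{}$ at the inductive step, whereas you decompose the difference and invoke mixed sub-additivity \ref{transcoherence: mixed additivity} and boundedness \ref{transcoherence: bounds} of $\uptrans{}$ (effectively its non-expansiveness in $\supnorm{\cdot}$); both routes encode the same ``one extra $\supnorm{f}$ per iteration'' bookkeeping.
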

\begin{proof}
Fix any $f\in\setofgambles{}(\statespace{})$.
It suffices to show that 
\begin{align}\label{Eq: lemma: average is equal to T^k average}
- k \supnorm{f} + \uptrans{}^k h \leq \avuptrans{f}{k} h \leq k \supnorm{f} + \uptrans{}^k h \text{ for all } h \in \setofgambles{}(\statespace{}) \text{ and all } k \in \natz{}.
\end{align}
To do so, we will use an induction argument in $k$.
The inequalities above clearly hold for all $h \in \setofgambles{}(\statespace{})$ and $k = 0$.
Now suppose that they hold for all $h \in \setofgambles{}(\statespace{})$ and all $k \in \{0,\cdots,i\}$, with $i \in \natz{}$.
Then we have that 
\begin{align*}
\avuptrans{f}{i+1} h 
\leq \avuptrans{f}{}\big( i \supnorm{f} + \uptrans{}^i h \big)
= i \supnorm{f} + \avuptrans{f}{}\big(  \uptrans{}^i h \big)
\leq (i+1) \supnorm{f} +  \uptrans{}^{(i+1)} h,
\end{align*}
for all $h \in \setofgambles{}(\statespace{})$, where the first step follows from the induction hypothesis for $k=i$ and the monotonicity [\ref{topical: monotonicity}] of $\avuptrans{f}{}$, the second from the constant additivity [\ref{topical: constant addivity}] of $\smash{\avuptrans{f}{}}$, and the third from the definition of $\avuptrans{f}{}$.
In an analogous way, we find that
\begin{align*}
\avuptrans{f}{i+1} h 
\geq \avuptrans{f}{}\big( - i \supnorm{f} + \uptrans{}^i h \big)
&= - i \supnorm{f} + \avuptrans{f}{}\big(  \uptrans{}^i h \big) \\
&\geq - (i+1) \supnorm{f} +  \uptrans{}^{(i+1)} h,
\end{align*}
for all $h \in \setofgambles{}(\statespace{})$, where the first step follows once more from the induction hypothesis for $k=i$ and the monotonicity [\ref{topical: monotonicity}] of $\avuptrans{f}{}$, the second from the constant additivity [\ref{topical: constant addivity}] of $\avuptrans{f}{}$, and the third from the definition of $\avuptrans{f}{}$.
Both inequalities together establish that the statement holds for $k=i+1$, thereby concluding the induction step.
\end{proof}

\begin{lemma}\label{lemma: average is equal to T^k average}
$\lim_{k \to +\infty} \supnorm{\uptrans{}^{\ell} \overline{m}_{f,k} - \overline{m}_{f,k+\ell}} = 0$ \/ for all $f\in\setofgambles{}(\statespace{})$ and all\/ $\ell \in \natz{}$.
\end{lemma}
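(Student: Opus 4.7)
My plan is to write $\overline{m}_{f,k+\ell}$ and $\uptrans{}^{\ell}\overline{m}_{f,k}$ in forms that can be compared through Lemma~\ref{lemma: T^kh - T_f^k h}, and then control the resulting discrepancy by the uniform bound on $\tilde{m}_{f,k}$ provided by Lemma~\ref{lemma: bounds average}. The case $\ell=0$ is immediate, so assume $\ell\geq 1$ and fix $f\in\setofgambles{}(\statespace{})$; in the rest of the plan I take $k\in\nats$.

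The key identity comes from iterating Equation~\eqref{Eq: recursive expression}: for all $k,\ell\in\nats$, one has $\tilde{m}_{f,k+\ell}=\avuptrans{f}{\ell}(\tilde{m}_{f,k})=\avuptrans{f}{\ell}(k\,\overline{m}_{f,k})$. Apply Lemma~\ref{lemma: T^kh - T_f^k h} with $h=k\,\overline{m}_{f,k}$ to obtain $\supnorm{\uptrans{}^{\ell}(k\,\overline{m}_{f,k})-\avuptrans{f}{\ell}(k\,\overline{m}_{f,k})}\leq \ell\supnorm{f}$. Since the scalar $k$ is non-negative, non-negative homogeneity \ref{iteratedcoherence: homogeneity} gives $\uptrans{}^{\ell}(k\,\overline{m}_{f,k})=k\,\uptrans{}^{\ell}\overline{m}_{f,k}$, so dividing by $k$ yields
\begin{equation*}
\supnorm*{\uptrans{}^{\ell}\overline{m}_{f,k}-\tfrac{1}{k}\avuptrans{f}{\ell}(k\,\overline{m}_{f,k})}\leq \tfrac{\ell}{k}\supnorm{f}.
\end{equation*}

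Next, since $\avuptrans{f}{\ell}(k\,\overline{m}_{f,k})=\tilde{m}_{f,k+\ell}=(k+\ell)\,\overline{m}_{f,k+\ell}$, the two ``averaged'' quantities differ only in the scaling $\tfrac{1}{k}$ versus $\tfrac{1}{k+\ell}$. Lemma~\ref{lemma: bounds average} gives $\supnorm{\tilde{m}_{f,k+\ell}}\leq (k+\ell)\supnorm{f}$, so
\begin{equation*}
\supnorm*{\tfrac{1}{k}\tilde{m}_{f,k+\ell}-\overline{m}_{f,k+\ell}}=\Bigl\lvert\tfrac{1}{k}-\tfrac{1}{k+\ell}\Bigr\rvert\supnorm{\tilde{m}_{f,k+\ell}}\leq \tfrac{\ell}{k(k+\ell)}\cdot(k+\ell)\supnorm{f}=\tfrac{\ell}{k}\supnorm{f}.
\end{equation*}
The triangle inequality then yields $\supnorm{\uptrans{}^{\ell}\overline{m}_{f,k}-\overline{m}_{f,k+\ell}}\leq \tfrac{2\ell}{k}\supnorm{f}$, and letting $k\to+\infty$ proves the claim.

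I do not foresee a real obstacle: the argument is essentially two applications of the triangle inequality glued together by Lemmas~\ref{lemma: T^kh - T_f^k h} and~\ref{lemma: bounds average}. The only subtlety worth flagging is that one must resist the temptation to directly bound $\supnorm{\uptrans{}^{\ell}\overline{m}_{f,k}-\avuptrans{f}{\ell}\overline{m}_{f,k}}$ and then claim this equals $\overline{m}_{f,k+\ell}$ up to small error, because $\avuptrans{f}{\ell}(\overline{m}_{f,k})\neq\tfrac{1}{k+\ell}\avuptrans{f}{\ell}(k\,\overline{m}_{f,k})$ in general---$\avuptrans{f}{\ell}$ is not homogeneous due to the $+f$ term at every step. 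The workaround is precisely to apply the lemma to the unscaled argument $k\,\overline{m}_{f,k}$ first and only divide afterwards, which is why the resulting bound behaves like $\ell/k$ rather than something larger.
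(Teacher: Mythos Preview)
Your proof is correct and follows essentially the same approach as the paper's own proof: both apply Lemma~\ref{lemma: T^kh - T_f^k h} to the unscaled argument $h=\tilde{m}_{f,k}=k\,\overline{m}_{f,k}$, use non-negative homogeneity~\ref{iteratedcoherence: homogeneity} to pull out the factor $k$, bound the scaling mismatch via Lemma~\ref{lemma: bounds average}, and finish with the triangle inequality to obtain the same bound $\tfrac{2\ell}{k}\supnorm{f}$. Your presentation is in fact slightly cleaner, since you derive the explicit quantitative bound directly rather than working through an auxiliary $\epsilon$.
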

\begin{proof}
Fix any $f\in\setofgambles{}(\statespace{})$, any $\ell \in \natz{}$ and any $\epsilon>0$.
Let $k \in \nats{}$ be such that $k \geq \ell \supnorm{f} / \epsilon$ and let $h \coloneqq \avuptrans{f}{k}(0) = k \overline{m}_{f,k}$.
Then
\begin{align}\label{Eq: lemma: average is equal to T^k average 2}
\epsilon 
\geq \tfrac{\ell}{k} \supnorm{f} 
\geq \tfrac{1}{k} \supnorm[\big]{\uptrans{}^\ell h - \avuptrans{f}{\ell} h }
&=  \supnorm[\big]{\uptrans{}^\ell \tfrac{1}{k} h - \tfrac{1}{k} \avuptrans{f}{\ell} h } \nonumber \\
&= \supnorm[\big]{\uptrans{}^\ell \overline{m}_{f,k} - \tfrac{1}{k} \avuptrans{f}{\ell} h },
\end{align}
where the second step follows from Lemma~\ref{lemma: T^kh - T_f^k h} and the third follows from the non-negative homogeneity [\ref{iteratedcoherence: homogeneity}] of $\uptrans{}^\ell$.
Moreover, we also have that
\begin{align*}
\supnorm[\big]{\tfrac{1}{k} \avuptrans{f}{\ell} h - \overline{m}_{f,k+\ell}}
= \supnorm[\big]{\tfrac{k+\ell}{k} \, \overline{m}_{f,k+\ell} - \overline{m}_{f,k+\ell}}
&= \tfrac{\ell}{k} \supnorm{\overline{m}_{f,k+\ell}} \\
&\leq \tfrac{\ell}{k} \supnorm{f} \leq \epsilon,
\end{align*}
where the second to last step follows from Lemma~\ref{lemma: bounds average}.
Combining this with Equation~\eqref{Eq: lemma: average is equal to T^k average 2} and using the triangle inequality, we get that
$\supnorm{\uptrans{}^\ell \overline{m}_{f,k} - \overline{m}_{f,k+\ell}}
\leq 2 \epsilon$.
Since this holds for any $\epsilon > 0$ and all $k \geq \ell \supnorm{f} / \epsilon $, we indeed have that $\lim_{k \to +\infty} \supnorm{\uptrans{}^\ell \overline{m}_{f,k} - \overline{m}_{f,k+\ell}} = 0$.
\end{proof}


\begin{proposition}\label{prop: if T is absorbing, then limit of m_f,k does not depend on S^c}
Consider any $\uptrans{}$ with a closed class $\mathcal{S}$ that is absorbing.
Then, for any $f\in\setofgambles{}(\statespace{})$ and any $y \in \statespace{}$, we have that
\begin{align*}
\min_{x \in \mathcal{S}} \liminf_{k \to +\infty} \tfrac{1}{k} [\avuptrans{f}{k}(0)](x) 
\leq \liminf_{k \to +\infty} \tfrac{1}{k} [\avuptrans{f}{k}(0)](y) 
\leq \limsup_{k \to +\infty} \tfrac{1}{k} [\avuptrans{f}{k}(0)](y) 
\leq \max_{x \in \mathcal{S}} \limsup_{k \to +\infty} \tfrac{1}{k} [\avuptrans{f}{k}(0)](x). 
\end{align*}
\end{proposition}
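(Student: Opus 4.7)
The middle inequality is trivial, so the work lies in the two outer ones; I will focus on the upper bound, the lower one following by a dual argument. My strategy is to use that $\mathcal{S}$ being closed and absorbing forces the values of $\overline{m}_{f,k}$ off $\mathcal{S}$ to be ``swallowed'' by the values on $\mathcal{S}$ after enough iterations of $\uptrans{}$, so that the time average at any $y\in\statespace{}$ is eventually dominated by what happens on $\mathcal{S}$.

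My first step will be a shift trick. I set $\beta_k \coloneqq \max_{x \in \mathcal{S}} \overline{m}_{f,k}(x)$ and, using Lemma~\ref{lemma: bounds average}, verify that $\overline{m}_{f,k} - \beta_k \leq 0$ on $\mathcal{S}$ and $\leq 2\supnorm{f}$ on $\mathcal{S}^c$, hence $\overline{m}_{f,k} - \beta_k \leq 2\supnorm{f}\indica{\mathcal{S}^c}$. Applying $\uptrans{}^\ell$ to this inequality and chaining constant additivity~\ref{iteratedcoherence: constant addivity}, monotonicity~\ref{iteratedcoherence: monotonicity}, and non-negative homogeneity~\ref{iteratedcoherence: homogeneity} will give
\begin{equation*}
\uptrans{}^\ell \overline{m}_{f,k}(y) \leq \beta_k + 2\supnorm{f}\,\uptrans{}^\ell\indica{\mathcal{S}^c}(y).
\end{equation*}

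In the second step I combine this bound with the two approximation lemmas already at hand. Given $\epsilon > 0$, I will use Lemma~\ref{lemma: limit of Rc converges to zero} to pick an $\ell_0$ with $2\supnorm{f}\,\uptrans{}^{\ell_0}\indica{\mathcal{S}^c}(y) < \epsilon$, and for this fixed $\ell_0$ invoke Lemma~\ref{lemma: average is equal to T^k average} to obtain a $k_0$ such that $\lvert \overline{m}_{f,k+\ell_0}(y) - \uptrans{}^{\ell_0}\overline{m}_{f,k}(y)\rvert < \epsilon$ for all $k \geq k_0$. Concatenating these inequalities will yield $\overline{m}_{f,k+\ell_0}(y) \leq \beta_k + 2\epsilon$ for $k \geq k_0$. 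Finally, taking $\limsup_{k\to+\infty}$ on both sides, using that $\limsup$ in $k$ commutes with $\max$ over the finite set $\mathcal{S}$, and letting $\epsilon\downarrow 0$, will deliver the upper bound in the statement.

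The lower bound will be proved in the same way with $\alpha_k \coloneqq \min_{x \in \mathcal{S}} \overline{m}_{f,k}(x)$ and $\overline{m}_{f,k} - \alpha_k \geq -2\supnorm{f}\indica{\mathcal{S}^c}$, except that to push $\uptrans{}^\ell$ past the negative constant factor non-negative homogeneity alone no longer suffices; I will instead apply mixed sub-additivity~\ref{iteratedcoherence: mixed additivity} with $h=0$ and $g=2\supnorm{f}\indica{\mathcal{S}^c}$, together with $\uptrans{}^\ell(0)=0$, to obtain the required $\uptrans{}^\ell(-2\supnorm{f}\indica{\mathcal{S}^c}) \geq -2\supnorm{f}\,\uptrans{}^\ell\indica{\mathcal{S}^c}$. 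The main obstacle throughout is engineering a \emph{tight} upper (respectively lower) envelope for $\overline{m}_{f,k}$ in terms of $\beta_k$ (respectively $\alpha_k$) and $\indica{\mathcal{S}^c}$: a naive decomposition $\overline{m}_{f,k} = \overline{m}_{f,k}\indica{\mathcal{S}} + \overline{m}_{f,k}\indica{\mathcal{S}^c}$ leaves behind a residual of order $\max\{0,\beta_k\}$ on $\mathcal{S}^c$ that does not vanish as $\ell\to+\infty$, and this is exactly what subtracting $\beta_k$ before decomposing will cancel out.
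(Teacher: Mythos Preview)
Your proof is correct and takes a genuinely different, more streamlined route than the paper's.

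The paper works with the unshifted $\overline{m}_{f,k}$ throughout: it first invokes an auxiliary lemma (Lemma~\ref{lemma: absorbing implies that T^k h only depends on h in R}) stating that $\supnorm{\uptrans{}^\ell h - \uptrans{}^\ell(h\indica{\mathcal{S}})} \leq \supnorm{h}\epsilon$ for large~$\ell$, applies it to $h=\overline{m}_{f,k}$, then bounds $\overline{m}_{f,k}\indica{\mathcal{S}}$ between $a\indica{\mathcal{S}}-\epsilon$ and $b\indica{\mathcal{S}}+\epsilon$ for $a=\min_{x\in\mathcal{S}}\liminf_k\overline{m}_{f,k}(x)$ and $b=\max_{x\in\mathcal{S}}\limsup_k\overline{m}_{f,k}(x)$, and finally invokes the same auxiliary lemma \emph{again} with $h=a$ and $h=b$ to replace $\uptrans{}^\ell(a\indica{\mathcal{S}})$ and $\uptrans{}^\ell(b\indica{\mathcal{S}})$ by the constants $a$ and $b$. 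Your shift by $\beta_k$ (respectively $\alpha_k$) collapses these two applications into one: after subtracting $\beta_k$ the function is nonpositive on $\mathcal{S}$, so a single monotonicity step against $2\supnorm{f}\indica{\mathcal{S}^c}$ suffices, and Lemma~\ref{lemma: limit of Rc converges to zero} is invoked directly rather than through the intermediate Lemma~\ref{lemma: absorbing implies that T^k h only depends on h in R}. The price you pay is the final interchange of $\limsup$ with $\max$ (and $\liminf$ with $\min$) over the finite set~$\mathcal{S}$, which is a standard pigeonhole argument; the paper sidesteps this by fixing $a$ and $b$ at the outset. Both arguments rest on the same two pillars, Lemmas~\ref{lemma: limit of Rc converges to zero} and~\ref{lemma: average is equal to T^k average}, but your decomposition is tighter and avoids one layer of indirection.
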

\begin{proof}
First note that the proposition is trivially satisfied if $f=0$ because Lemma~\ref{lemma: bounds average} and Equation~\eqref{Eq: recursive expression 2} then imply that $\tfrac{1}{k} [\avuptrans{f}{k}(0)](x) = \upprev[\mathrm{av},k](f\vert x) = 0$ for all $x \in \statespace{}$ and $k \in \nats{}$.
So suppose that $f \not= 0$, fix any $\epsilon > 0$ and let $\epsilon_1 \coloneqq (\sfrac{1}{\supnorm{f}}) \epsilon$.
Choose $\ell_1$ such that Lemma~\ref{lemma: absorbing implies that T^k h only depends on h in R} holds with $\epsilon_1$; that is, in such a way that 
\begin{align}\label{Eq: proposition: if top class absorbing then weakly ergodic 5}
\supnorm{\uptrans{}^\ell (h \indica{\mathcal{S}}) - \uptrans{}^\ell h} \leq \epsilon_1 \supnorm{h} \text{ for all } \ell \geq \ell_1 \text{ and all } h \in \setofgambles{}(\statespace{}).
\end{align}
Then, in particular, for any $\ell \geq \ell_1$, we have that
$\supnorm[\big]{\uptrans{}^\ell (\overline{m}_{f,k} \indica{\mathcal{S}}) - \uptrans{}^\ell \overline{m}_{f,k}} 
\leq \epsilon_1 \supnorm{\overline{m}_{f,k}} 
\leq \epsilon_1 \supnorm{f}
= \epsilon \text{ for all } k \in \nats{}$,
where the second inequality follows from Lemma~\ref{lemma: bounds average}.
Fix any such $\ell \geq \ell_1$.
Furthermore, recall Lemma~\ref{lemma: average is equal to T^k average}, which guarantees that there is some $k_1 \in \nats{}$ such that $\supnorm{\uptrans{}^\ell \overline{m}_{f,k} - \overline{m}_{f,k+\ell}} \leq \epsilon$ for all $k \geq k_1$.
Combining this with the inequality above, we get that $\supnorm{\uptrans{}^\ell (\overline{m}_{f,k} \indica{\mathcal{S}}) - \overline{m}_{f,k+\ell}} \leq 2\epsilon$ for all $k \geq k_1$ or, equivalently, that 
\begin{align}\label{Eq: proposition: if top class absorbing then weakly ergodic 3}
\uptrans{}^\ell (\overline{m}_{f,k} \indica{\mathcal{S}}) - 2\epsilon 
\leq \overline{m}_{f,k+\ell} 
\leq \uptrans{}^\ell (\overline{m}_{f,k} \indica{\mathcal{S}}) + 2\epsilon \text{ for all } k \geq k_1.
\end{align}

Now, as a consequence of the definitions of the limit inferior and the limit superior operators and the fact that $\overline{m}_{f,k}$ is bounded due to Lemma~\ref{lemma: bounds average}, there is, for any $x\in\mathcal{S}$, an index $k_{x}\in\nats$ such that 
\begin{align*}
\liminf_{k'\to+\infty} \overline{m}_{f,k'}(x) - \epsilon
\leq \overline{m}_{f,k}(x)
\leq \limsup_{k'\to+\infty} \overline{m}_{f,k'}(x) + \epsilon \text{ for all } k\geq k_x.
\end{align*} 
Then, if we let $a\coloneqq \min_{x\in\mathcal{S}}\liminf_{k'\to+\infty} \overline{m}_{f,k'}(x)$ and $b\coloneqq \max_{x\in\mathcal{S}}\limsup_{k'\to+\infty} \overline{m}_{f,k'}(x)$, we certainly have that $a - \epsilon
 \leq \overline{m}_{f,k}(x)
 \leq b + \epsilon$ for any $x\in\mathcal{S}$ and all $k\geq k_x$.
Hence,
\begin{align*}
a \indica{\mathcal{S}} - \epsilon
 \leq \overline{m}_{f,k} \indica{\mathcal{S}}
 \leq b \indica{\mathcal{S}} + \epsilon 
 \text{ for all } k\geq k_2 \coloneqq \max_{x\in\mathcal{S}} k_x \in \nats,
\end{align*}
which by \ref{iteratedcoherence: monotonicity} and \ref{iteratedcoherence: constant addivity} implies that 
\begin{align*}
\uptrans{}^\ell (a \indica{\mathcal{S}}) - \epsilon
 \leq \uptrans{}^\ell (\overline{m}_{f,k} \indica{\mathcal{S}})
 \leq \uptrans{}^\ell (b \indica{\mathcal{S}}) + \epsilon 
 \text{ for all } k\geq k_2.
\end{align*}
Together with Equation~\eqref{Eq: proposition: if top class absorbing then weakly ergodic 3}, this implies that
\begin{align}\label{Eq: proposition: if top class absorbing then weakly ergodic 4}
\uptrans{}^\ell (a \indica{\mathcal{S}}) - 3\epsilon
 \leq \overline{m}_{f,k+\ell}
 \leq \uptrans{}^\ell (b \indica{\mathcal{S}}) + 3\epsilon 
 \text{ for all } k \geq \max\{k_1,k_2\}.
\end{align}
Now, recall Equation~\eqref{Eq: proposition: if top class absorbing then weakly ergodic 5} and observe that therefore 
\begin{align*}
\supnorm[\big]{\uptrans{}^\ell (a \indica{\mathcal{S}}) - a}
= \supnorm[\big]{\uptrans{}^\ell (a \indica{\mathcal{S}}) - \uptrans{}^\ell a} 
\leq \epsilon_1 \supnorm{a},
\end{align*}
where the first step follows from \ref{iteratedcoherence: bounds}.
Moreover, we have that $\supnorm{a} = \vert a \vert \leq \supnorm{f}$ due to Lemma~\ref{lemma: bounds average}, so we conclude that $\supnorm{\uptrans{}^\ell (a \indica{\mathcal{S}}) - a} \leq \epsilon_1 \supnorm{f} = \epsilon$.
In a completely analogous way, we can deduce that $\supnorm{\uptrans{}^\ell (b \indica{\mathcal{S}}) - b} \leq \epsilon$.
So, in particular, we have that $a- \epsilon \leq \uptrans{}^\ell (a \indica{\mathcal{S}})$ and that $\uptrans{}^\ell (b \indica{\mathcal{S}}) \leq b+\epsilon$.
Applying these inequalities to Equation~\eqref{Eq: proposition: if top class absorbing then weakly ergodic 4}, we obtain that $a - 4\epsilon
\leq \overline{m}_{f,k+\ell}
\leq b + 4\epsilon$ for all $k \geq \max\{k_1,k_2\}$.
Hence, 
\begin{align*}
a - 4\epsilon
\leq \liminf_{k\to+\infty} \overline{m}_{f,k}
\leq \limsup_{k\to+\infty} \overline{m}_{f,k}
\leq b + 4\epsilon
\end{align*}
Since this holds for any $\epsilon>0$, we conclude that $a \leq \liminf_{k\to+\infty} \overline{m}_{f,k} \leq \limsup_{k\to+\infty} \overline{m}_{f,k}
\leq b$, which by the definition of $a$ and $b$, and the fact that, due to Equation~\eqref{Eq: recursive expression 2}, $\overline{m}_{f,k} = \tfrac{1}{k} [\avuptrans{f}{k}(0)]$ for all $k\in\natz$, implies the desired statement.
\end{proof}

\begin{proofof}{Proposition~\ref{proposition: if top class absorbing then weakly ergodic}.}
Assume that $\uptrans{}$ satisfies (TCA) and let $\mathcal{R}$ be the corresponding top class.
Fix any $f\in\setofgambles{}(\statespace{})$.
According to Lemma~\ref{lemma: if single communication class then top class}, $\mathcal{R}$ is a closed communication class, which by Proposition~\ref{proposition: if top class then eigenvector in top class} implies that $\lim_{k\to+\infty}\upprev[\mathrm{av},k](f \vert x) = \lim_{k\to+\infty}\overline{m}_{f,k}(x) = \mu$ for some $\mu\in\reals{}$ and all $x\in\mathcal{R}$.
Since $\mathcal{R}$ is closed and absorbing [\ref{property: absorbing}], we can then apply Proposition~\ref{prop: if T is absorbing, then limit of m_f,k does not depend on S^c}---taking into account that $\tfrac{1}{k} [\avuptrans{f}{k}(0)]=\overline{m}_{f,k}$ for all $k\in\nats$ due to Equation~\eqref{Eq: recursive expression 2}---to find that, for all $y\in\statespace{}$,
$\mu 
\leq \liminf_{k \to +\infty} \overline{m}_{f,k} (y) 
\leq \limsup_{k \to +\infty} \overline{m}_{f,k} (y) 
\leq \mu$. 
Hence, we have that $\lim_{k \to +\infty} \overline{m}_{f,k} = \mu$ and conclude that $\uptrans{}$ is weakly ergodic.
\end{proofof}

\section{Proof of Propositions~\ref{prop: top class if weak ergodicity} and~\ref{prop: TCA if weak ergodicity}}\label{Sect: app: weakly ergodic iff top class absorbing}

In the following proofs, as well as in the proofs of \ref{Sect: appendix: repetition independence}, we will often implicitly use the fact that any given top class $\mathcal{R}$ is necessarily closed; see Lemma~\ref{lemma: if single communication class then top class}.

\begin{proofof}{Proposition~\ref{prop: top class if weak ergodicity}.}
Suppose that $\uptrans{}$ does not have a top class.
Then due to Corollary~\ref{corollary: no top class}, there are (at least) two closed communication classes $\mathcal{S}_1$ and $\mathcal{S}_2$.
Consider any two $c_1, c_2 \in \reals{}$ such that $c_1 \not= c_2$, and let $f \coloneqq c_1 \indica{\mathcal{S}_1} + c_2 \indica{\mathcal{S}_2}$.
Since $f \indica{\mathcal{S}_1} = c_1 \indica{\mathcal{S}_1}$, Lemma~\ref{lemma: average does not depend on f outside S} implies that 
$\tfrac{1}{k} [\avuptrans{f}{k}(0)]\, \indica{\mathcal{S}_1} 
= \tfrac{1}{k} [\avuptrans{c_1}{k}(0)]\, \indica{\mathcal{S}_1}$ for all $k \in \nats{}$ or, by Equation~\eqref{Eq: recursive expression 2}, that $\upprev[\mathrm{av},k](f \vert x) = \upprev[\mathrm{av},k](c_1 \vert x)$ for all $x \in \mathcal{S}_1$ and all $k \in \nats{}$.
By Lemma~\ref{lemma: bounds average}, we know that for any $x \in \mathcal{S}_1$ and any $k \in \nats{}$, $\upprev[\mathrm{av},k](f \vert x) = \upprev[\mathrm{av},k](c_1 \vert x) = c_1$.
Hence, $\lim_{k \to +\infty} \upprev[\mathrm{av},k](f \vert x) = c_1$ for all $x \in \mathcal{S}_1$.
In a completely analogous way, we can deduce that $\lim_{k \to +\infty} \upprev[\mathrm{av},k](f \vert x) = c_2$ for all $x \in \mathcal{S}_2$.
By assumption, $c_1 \not= c_2$, so we can conclude that the upper expectation $\upprev[\mathrm{av},k](f \vert x)$ with $f = c_1 \indica{\mathcal{S}_1} + c_2 \indica{\mathcal{S}_2}$, does not converge to a constant that is equal for all $x \in \statespace{}$.
Hence, the upper transition operator $\uptrans{}$ is not weakly ergodic.
\end{proofof}

\begin{lemma}\label{lemma: not TCA then TI_A equals one}
Consider any $\uptrans{}$ that has a top class $\mathcal{R}$ but that does not satisfy (TCA).
Then there is a non-empty subset $A \subseteq \mathcal{R}^c$ such that $\indica{A} \leq \uptrans{}\indica{A}$.
\end{lemma}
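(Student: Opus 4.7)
The plan is to identify the required set $A$ as the collection of states from which, uniformly over all time horizons, the upper probability of never reaching $\mathcal{R}$ equals $1$. Concretely, I would define, for each $k \in \nats{}$, the set $A_k \coloneqq \{x \in \statespace{} : \uptrans{}^k \indica{\mathcal{R}^c}(x) = 1\}$. First I would collect three basic facts: since $\mathcal{R}$ is the top class and hence closed, Lemma~\ref{lemma: Rc decreases} implies that $\uptrans{}^k \indica{\mathcal{R}^c}$ is non-increasing in $k$, and therefore $A_1 \supseteq A_2 \supseteq \cdots$; Lemma~\ref{lemma: not from S to Sc} gives $\uptrans{}^k \indica{\mathcal{R}^c}(x) = 0$ for all $x \in \mathcal{R}$, so $A_k \subseteq \mathcal{R}^c$ for every $k$; and since $\statespace{}$ is finite, the nested sequence $(A_k)$ stabilises, i.e.\ there exists $n \in \nats{}$ such that $A \coloneqq A_n = A_k$ for all $k \geq n$.

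Next I would check that $A$ is non-empty. The failure of (TCA) means that some $x \in \mathcal{R}^c$ satisfies $\uptrans{}^k\indica{\mathcal{R}^c}(x) \geq 1$ for every $k \in \nats{}$; combining this with $\uptrans{}^k\indica{\mathcal{R}^c}(x) \leq 1$ (property~\ref{iteratedcoherence: bounds}) yields $x \in A_k$ for all $k$, and hence $x \in A$. In particular $A \subseteq \mathcal{R}^c$ is non-empty, and since $\mathcal{R}$ is non-empty, $A^c$ is non-empty as well.

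The main step is to show $\indica{A} \leq \uptrans{}\indica{A}$, which by \ref{transcoherence: bounds} reduces to proving $\uptrans{}\indica{A}(x) = 1$ for every $x \in A$. The key observation is that, by the definition of $A$ and the finiteness of $\statespace{}$, the number $\epsilon \coloneqq 1 - \max_{y \in A^c} \uptrans{}^n\indica{\mathcal{R}^c}(y)$ is strictly positive, giving the pointwise bound
\begin{align*}
\uptrans{}^n\indica{\mathcal{R}^c} \leq \indica{A} + (1-\epsilon)\indica{A^c} = (1-\epsilon) + \epsilon\,\indica{A}.
\end{align*}
For any $x \in A = A_{n+1}$, applying \ref{transcoherence: monotonicity} and then \ref{transcoherence: constant addivity} together with \ref{transcoherence: homogeneity} to the right-hand side gives
\begin{align*}
1 = \uptrans{}^{n+1}\indica{\mathcal{R}^c}(x) = \uptrans{}\bigl(\uptrans{}^n\indica{\mathcal{R}^c}\bigr)(x) \leq \uptrans{}\bigl((1-\epsilon) + \epsilon\,\indica{A}\bigr)(x) = (1-\epsilon) + \epsilon\,\uptrans{}\indica{A}(x),
\end{align*}
and rearranging using $\epsilon > 0$ yields $\uptrans{}\indica{A}(x) \geq 1$; combined with $\uptrans{}\indica{A}(x) \leq 1$ from \ref{transcoherence: bounds}, this gives $\uptrans{}\indica{A}(x) = 1 = \indica{A}(x)$, as required.

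I do not expect any real obstacle here; the only delicate point is the existence of the strictly positive gap $\epsilon$, which is precisely what the stabilisation of $(A_k)$ in a finite state space provides. Once that gap is in place, the algebraic conversion of $\uptrans{}^n\indica{\mathcal{R}^c} \leq (1-\epsilon) + \epsilon\,\indica{A}$ into the desired lower bound on $\uptrans{}\indica{A}$ is immediate from constant additivity and non-negative homogeneity.
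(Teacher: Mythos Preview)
Your proof is correct and follows essentially the same approach as the paper: both identify $A$ as the set of states where $\uptrans{}^k\indica{\mathcal{R}^c}=1$ for all $k$, bound $\uptrans{}^n\indica{\mathcal{R}^c}$ above by an affine function of $\indica{A}$ using a strictly positive gap, and apply $\uptrans{}$ once more to deduce $\uptrans{}\indica{A}(x)\geq 1$ on $A$. Your stabilisation framing, taking the maximum over all of $A^c$ rather than just $\mathcal{R}^c\setminus A$, absorbs the edge case $A=\mathcal{R}^c$ that the paper handles by a separate case split, but otherwise the arguments coincide.
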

\begin{proof}
If $\uptrans{}$ has a top class $\mathcal{R}$ that is not absorbing, then there is at least one $x \in \mathcal{R}^c$ such that $\uptrans{}^k \indica{\mathcal{R}^c}(x) = 1$ for all $k \in \nats{}$.
Let $A \subseteq \mathcal{R}^c$ be the set of all such states $x \in \mathcal{R}^c$.
If $A = \mathcal{R}^c$ then, since $\uptrans{} \indica{\mathcal{R}^c}(x) = 1$ for all $x \in A = \mathcal{R}^c$ and since $\uptrans{} \indica{\mathcal{R}^c}(x) = 0$ for all $x \in \mathcal{R}$ by Lemma~\ref{lemma: not from S to Sc}, we have that $\uptrans{}\indica{\mathcal{R}^c} = \indica{\mathcal{R}^c}$.
Hence, in that case, the statement holds. In the remainder of the proof, we can therefore assume that $A \subset \mathcal{R}^c$, implying that $\mathcal{R}^c \setminus A$ is non-empty.

Observe that by the definition of $A$ there is for any $x \in \mathcal{R}^c \setminus A$, an index $k_x \in \nats{}$ such that $\uptrans{}^{k_x} \indica{\mathcal{R}^c} (x) \not= 1$, and therefore, due to \ref{iteratedcoherence: bounds}, also that $\uptrans{}^{k_x} \indica{\mathcal{R}^c} (x) < 1$.
By Lemma~\ref{lemma: Rc decreases}, it follows that then also $\uptrans{}^{k} \indica{\mathcal{R}^c} (x) < 1$ for all $x \in \mathcal{R}^c \setminus A$ and all $k \geq k_x$.
Hence, for $k \coloneqq \smash{\max_{x \in \mathcal{R}^c \setminus A} k_x} \in \nats{}$, we have that $\uptrans{}^{k} \indica{\mathcal{R}^c} (x) < 1$ for all $x \in \mathcal{R}^c \setminus A$.
Let $\alpha \coloneqq \smash{\max_{x \in \mathcal{R}^c \setminus A} \uptrans{}^{k} \indica{\mathcal{R}^c} (x)} < 1$.
Since $\uptrans{}^{k} \indica{\mathcal{R}^c}(x) = 0$ for all $x \in \mathcal{R}$ due to Lemma~\ref{lemma: not from S to Sc} and $0 \leq \alpha$ due to \ref{iteratedcoherence: bounds}, we infer that $\uptrans{}^{k} \indica{\mathcal{R}^c}(x) \leq \alpha$ for all $x \in \mathcal{R} \cup (\mathcal{R}^c \setminus A) = A^c$ or, equivalently, that $\indica{A^c} \uptrans{}^{k} \indica{\mathcal{R}^c} \leq \alpha \indica{A^c}$. Hence,
\begin{equation*}
\uptrans{}^k \indica{\mathcal{R}^c}
=\indica{A} \uptrans{}^k \indica{\mathcal{R}^c}+\indica{A^c}\uptrans{}^k \indica{\mathcal{R}^c}
\leq\indica{A} \uptrans{}^k \indica{\mathcal{R}^c}+ \alpha \indica{A^c}
=\indica{A}+ \alpha \indica{A^c},
\end{equation*}
using the definition of $A$ for the last equality. It follows that
\begin{align*}
\uptrans{}^{k+1} \indica{\mathcal{R}^c} 
=
\uptrans{}\,\uptrans{}^{k} \indica{\mathcal{R}^c} 
\leq \uptrans{} \big( \indica{A} + \alpha \indica{A^c} \big) 
= \uptrans{} \big( \alpha + (1 - \alpha) \indica{A} \big) 
= \alpha + (1 - \alpha) \uptrans{} \indica{A}, 
\end{align*}
where we used monotonicity [\ref{transcoherence: monotonicity}] for the inequality and \ref{transcoherence: constant addivity} and \ref{transcoherence: homogeneity} for the last equality. Multiplying with $\indica{A}$ yields $\indica{A} \uptrans{}^{k+1} \indica{\mathcal{R}^c} \leq \alpha\indica{A} + (1 - \alpha) \indica{A} \uptrans{} \indica{A}$ and therefore, since the definition of $A$ implies that $\indica{A}=\indica{A}\uptrans{}^{k+1} \indica{\mathcal{R}^c}$, we find that $\indica{A} \leq \alpha\indica{A} + (1 - \alpha) \indica{A} \uptrans{} \indica{A}$, or equivalently, that $(1 - \alpha) \indica{A} \leq (1 - \alpha) \indica{A} \uptrans{} \indica{A}$.
Since $1-\alpha > 0$, it follows that $\indica{A} \leq \indica{A} \uptrans{} \indica{A}$, which implies that $\indica{A} \leq \uptrans{} \indica{A}$ because $\uptrans{} \indica{A}$ is non-negative [\ref{transcoherence: bounds}].
\end{proof}

\medskip 

\begin{proofof}{Proposition~\ref{prop: TCA if weak ergodicity}.}
Suppose that $\uptrans{}$ has a top class $\mathcal{R}$, but that $\mathcal{R}$ is not absorbing.
Then Lemma~\ref{lemma: not TCA then TI_A equals one} guarantees that there is a non-empty subset $A \subseteq \mathcal{R}^c$ such that $\indica{A} \leq \uptrans{}\indica{A}$.
Now consider any $x \in A$ and any $y \in \mathcal{R}$.
We will show that $\lim_{k \to +\infty} \upprev[\mathrm{av},k](\indica{A} \vert x) = 1$ and that $\lim_{k \to +\infty} \upprev[\mathrm{av},k](\indica{A} \vert y) = 0$, implying that $\uptrans{}$ cannot be weakly ergodic.

To prove that $\lim_{k \to +\infty} \upprev[\mathrm{av},k](\indica{A} \vert x) = 1$, we first show by induction that $\tilde{m}_{\indica{A},k} \geq k \indica{A}$ for all $k \in \nats{}$.
By definition, we have that $\tilde{m}_{\indica{A},1} = \indica{A}$, which establishes our induction base.
To prove the induction step, assume that the inequality holds for $k = i$, with $i \in \nats{}$, so $\tilde{m}_{\indica{A},i} \geq i \indica{A}$.
Then according to the recursive expression \eqref{Eq: recursive expression},
\begin{align*}
\tilde{m}_{\indica{A},i+1} 
= \indica{A} + \uptrans{} \tilde{m}_{\indica{A},i}
\geq \indica{A} + \uptrans{} (i \indica{A})
\geq (i+1) \indica{A},
\end{align*}
where the second step follows from the induction hypothesis and the monotonicity [\ref{transcoherence: monotonicity}] of $\uptrans{}$, and the last from \ref{transcoherence: homogeneity} together with the fact that $\indica{A} \leq \uptrans{}\indica{A}$.
This implies that the inequality holds for $k=i+1$ as well, hence finalising our induction argument.
Taking into account Equation~\eqref{Eq: recursive expression 2}, we conclude that $\upprev[\mathrm{av},k](\indica{A} \vert x) = \tfrac{1}{k} \tilde{m}_{\,\indica{A},k}(x) \geq \indica{A}(x)$ for all $k \in \nats{}$.
Due to Lemma~\ref{lemma: bounds average} and since $x \in A$, this implies that $\smash{\upprev[\mathrm{av},k](\indica{A} \vert x)} = 1$ for all $k \in \nats{}$.
Hence, $\smash{\lim_{k \to +\infty} \upprev[\mathrm{av},k](\indica{A} \vert x)} = 1$.

It remains to prove that $\lim_{k \to +\infty} \upprev[\mathrm{av},k](\indica{A} \vert y) = 0$.
Because $A \subseteq \mathcal{R}^c$, we have that $\indica{A} \indica{\mathcal{R}} = 0 = 0 \, \indica{\mathcal{R}}$, and since $\mathcal{R}$ is a closed communication class, Lemma~\ref{lemma: average does not depend on f outside S} implies that $\tfrac{1}{k}[\avuptrans{\indica{A}}{k}(0)]\,\indica{\mathcal{R}} = \tfrac{1}{k}[\avuptrans{0}{k}(0)]\,\indica{\mathcal{R}}$ for all $k \in \nats{}$.
Hence, by Equation~\eqref{Eq: recursive expression 2}, we have that $\upprev[\mathrm{av},k](\indica{A} \vert y) = \upprev[\mathrm{av},k](0\, \vert y)$ for all $k \in \nats{}$, and therefore, by Lemma~\ref{lemma: bounds average}, that $\upprev[\mathrm{av},k](\indica{A} \vert y) = 0$ for all $k \in \nats{}$.
As a consequence, $\lim_{k \to +\infty} \upprev[\mathrm{av},k](\indica{A} \vert y) = 0$.
\end{proofof}

\section{Proof of the results in Section~\ref{sect: weak ergo in repetition independence}}\label{Sect: appendix: repetition independence}

For any upper transition operator $\uptrans{}$, any transition matrix $T$, any $f\in\setofgambles{}(\statespace{})$ and any non-empty $\mathcal{S}\subseteq\statespace{}$, we define the maps $T_{f,\mathcal{S}}$, $\avuptrans{\mathcal{S}}{}$ and $T_\mathcal{S}$ on $\setofgambles{}(\mathcal{S})$ similarly to how we previously defined the map $\avuptrans{f,\mathcal{S}}{}$;
so $T_{f,\mathcal{S}}h\coloneqq(T_f h^\uparrow)\vert_\mathcal{S}$, $\avuptrans{\mathcal{S}}{}h\coloneqq(\uptrans{} h^\uparrow)\vert_\mathcal{S}$ and $T_{\mathcal{S}}h\coloneqq(T h^\uparrow)\vert_\mathcal{S}$ for all $h\in\setofgambles{}(\mathcal{S})$.
In the following statements, we implicitly assume that $\settrans$ is a separately specified set of transition matrices, that $\uptrans{}$ is the associated upper transition operator and that $f$ is any function in $\setofgambles{}(\statespace{})$.

\begin{lemma}\label{lemma: settrans_S is set of transition matrices}
For any closed class $\mathcal{S}$ in $\mathscr{G}(\uptrans{})$, we have that $\{T_\mathcal{S}\colon T\in\settrans{}\}$ is a separately specified set of \/ $\vert\mathcal{S}\vert \times \vert\mathcal{S}\vert$ transition matrices and that $\avuptrans{\mathcal{S}}{}$ is the corresponding upper transition operator.
\end{lemma}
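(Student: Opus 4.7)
The plan is to verify three things in turn: that each $T_\mathcal{S}$ is a bona fide $\vert\mathcal{S}\vert\times\vert\mathcal{S}\vert$ transition matrix, that the resulting collection is separately specified, and that its upper envelope coincides with $\avuptrans{\mathcal{S}}{}$.

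First I would pin down what $T_\mathcal{S}$ looks like as a matrix. Unpacking the definition, for $x,y\in\mathcal{S}$ we have $T_\mathcal{S}(x,y) = [T_\mathcal{S}\indica{y}](x) = [T\indica{y}^\uparrow](x) = T(x,y)$, so $T_\mathcal{S}$ is simply the principal submatrix of $T$ indexed by $\mathcal{S}\times\mathcal{S}$. Row-stochasticity on $\mathcal{S}$ then reduces to showing $T(x,y)=0$ for every $x\in\mathcal{S}$ and $y\in\mathcal{S}^c$. Since $\mathcal{S}$ is closed in $\mathscr{G}(\uptrans{})$, Lemma~\ref{lemma: S is maximal in G(uptrans) then S is maximal in G(T)} tells us $\mathcal{S}$ is closed in $\mathscr{G}(T)$, so by Lemma~\ref{lemma: directed path} there is no edge from $x$ to $y$, i.e.\ $T(x,y)=0$. (Alternatively, $\uptrans{}\indica{y}(x)=0$ together with $T(x,y)\leq\uptrans{}\indica{y}(x)$ and non-negativity gives the same conclusion.) Summing, $\sum_{y\in\mathcal{S}}T_\mathcal{S}(x,y) = \sum_{y\in\statespace{}}T(x,y) = 1$.

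Next I would check separate specification. Given any two $T_{1,\mathcal{S}}, T_{2,\mathcal{S}}$ (coming from $T_1,T_2\in\settrans{}$) and any $A\subseteq\mathcal{S}$, separate specification of $\settrans{}$ yields some $T_3\in\settrans{}$ whose rows agree with $T_1$ on $A$ and with $T_2$ on $\statespace{}\setminus A$; restricting to $\mathcal{S}$, the rows of $T_{3,\mathcal{S}}$ agree with those of $T_{1,\mathcal{S}}$ on $A$ and with those of $T_{2,\mathcal{S}}$ on $\mathcal{S}\setminus A$, which is what we need.

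Finally, for the upper transition operator claim, fix $h\in\setofgambles{}(\mathcal{S})$ and $x\in\mathcal{S}$. Because $h^\uparrow$ vanishes on $\mathcal{S}^c$, we have
\begin{align*}
\sup_{T\in\settrans{}} [T_\mathcal{S} h](x)
= \sup_{T\in\settrans{}}\sum_{y\in\mathcal{S}}T(x,y)h(y)
= \sup_{T\in\settrans{}}\sum_{y\in\statespace{}}T(x,y)h^\uparrow(y)
= [\uptrans{}h^\uparrow](x)
= [\avuptrans{\mathcal{S}}{}h](x),
\end{align*}
which is exactly the defining identity for $\avuptrans{\mathcal{S}}{}$ being the upper envelope of $\{T_\mathcal{S}\colon T\in\settrans{}\}$.

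I do not expect any real obstacle here: the only nontrivial ingredient is the reduction of closedness in $\mathscr{G}(\uptrans{})$ to a hard zero on off-$\mathcal{S}$ entries of each $T\in\settrans{}$, and this is already furnished by Lemmas~\ref{lemma: directed path} and~\ref{lemma: S is maximal in G(uptrans) then S is maximal in G(T)}. Everything else is bookkeeping with the zero-extension/restriction pair.
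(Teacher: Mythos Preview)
Your proof is correct and follows essentially the same approach as the paper: identify $T_\mathcal{S}$ as the principal $\mathcal{S}\times\mathcal{S}$ submatrix, use closedness of $\mathcal{S}$ in $\mathscr{G}(T)$ (via Lemma~\ref{lemma: S is maximal in G(uptrans) then S is maximal in G(T)}) to kill the off-$\mathcal{S}$ mass and obtain row-stochasticity, inherit separate specification from $\settrans{}$, and verify the upper-envelope identity by a direct unpacking of the definitions. The only cosmetic difference is that the paper invokes Lemma~\ref{lemma: not from S to Sc} to get $T\indica{\mathcal{S}^c}(x)=0$ in one stroke, whereas you argue entrywise via Lemma~\ref{lemma: directed path}; both routes are equivalent.
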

\begin{proof}
It follows trivially from its definition that, for any $T\in\settrans$, the map $T_\mathcal{S}$ is represented by a $\vert\mathcal{S}\vert \times \vert\mathcal{S}\vert$ matrix where the $(i,j)$-th element is equal to the $(i,j)$-th element in $T$ for all $i,j \in \{1,\cdots,\vert\mathcal{S}\vert\}$.
So, to ensure that $T_\mathcal{S}$ is row-stochastic---and can therefore be called a transition matrix---we only have to check whether the elements in each row sum to $1$.
To see that this is indeed the case, note that, due to Lemma~\ref{lemma: S is maximal in G(uptrans) then S is maximal in G(T)} and the fact that $\mathcal{S}$ is closed in $\mathscr{G}(\uptrans{})$, the class $\mathcal{S}$ is closed in $\mathscr{G}(T)$.
Then Lemma~\ref{lemma: not from S to Sc} implies that $T \indica{\mathcal{S}^c}(x) = 0$ for all $x\in\mathcal{S}$ and therefore, since $T$ is a transition matrix (which is linear), that $T \indica{\mathcal{S}}(x) = \sum_{y\in\mathcal{S}} T(x,y) = 1$ for all $x\in\mathcal{S}$.
Hence, recalling our previous considerations, we also have that $\sum_{y\in\mathcal{S}} T_\mathcal{S}(x,y) = 1$ for all $x\in\mathcal{S}$, which allows us to call $T_\mathcal{S}$ a transition matrix.
Furthermore, that $\{T_\mathcal{S}\colon T\in\settrans{}\}$ is separately specified follows immediately from the fact that $\settrans$ is separately specified and that, as explained above, for any $T\in\settrans$, the matrix $T_\mathcal{S}$ is simply a restriction of $T$ to the first $\vert\mathcal{S}\vert$ dimensions.
Finally, to see that $\avuptrans{\mathcal{S}}{}$ is the upper expectation operator corresponding to the set $\settrans' \coloneqq \{T_\mathcal{S}\colon T\in\settrans\}$---meaning that $[\avuptrans{\mathcal{S}}{} h] (x) = \sup_{T'\in\settrans'} T'h (x)$ for all $h\in\setofgambles{}(\mathcal{S})$ and all $x\in\mathcal{S}$---it suffices to observe that, for any $h\in\setofgambles{}(\mathcal{S})$ and any $x\in\mathcal{S}$,
\begin{align*}
[\avuptrans{\mathcal{S}}{} h] (x) 
= [\uptrans{} h^\uparrow] (x)
= \sup_{T\in\settrans} T h^\uparrow (x)
= \sup_{T\in\settrans} T_\mathcal{S} h (x)
= \sup_{T'\in\settrans'} T'h (x). 
\tag*{\qedhere}
\end{align*}
\end{proof}

\begin{lemma}\label{Lemma: if max commun class S then IMC_ri weakly ergodic for states in S}
Consider any closed communication class $\mathcal{S}$ in $\mathscr{G}(\uptrans{})$ and let $\mu$ be the unique eigenvalue of\/ $\avuptrans{f,\mathcal{S}}{}$.
Then, for any $\epsilon>0$, there is a $T\in\settrans$, such that
\begin{align*}
\mu -\epsilon 
\leq \liminf_{k\to+\infty} \tfrac{1}{k} [T_f^{k}(0)]\vert_\mathcal{S} \leq \mu.
\end{align*}
\end{lemma}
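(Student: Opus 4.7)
The plan is to reduce the statement to a direct application of Lemma~\ref{lemma: if T_f has eigenvalue then repetition independence weakly ergodic}, but carried out in the \emph{restricted} setting where the state space $\statespace{}$ is replaced by the closed communication class $\mathcal{S}$. The key observation is that the restriction map $h\mapsto h\vert_\mathcal{S}$ and its inverse, zero-extension, intertwine cleanly with both $\uptrans{}$ and each $T\in\settrans{}$ precisely because $\mathcal{S}$ is closed in $\mathscr{G}(\uptrans{})$ and therefore, by Lemma~\ref{lemma: S is maximal in G(uptrans) then S is maximal in G(T)}, also closed in $\mathscr{G}(T)$ for every $T\in\settrans{}$.

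First I would invoke Lemma~\ref{lemma: settrans_S is set of transition matrices}, which asserts that $\settrans{}'\coloneqq\{T_\mathcal{S}\colon T\in\settrans{}\}$ is a separately specified set of $\vert\mathcal{S}\vert\times\vert\mathcal{S}\vert$ transition matrices whose associated upper transition operator is $\avuptrans{\mathcal{S}}{}$. Unfolding the definitions, one immediately verifies that $T_{f,\mathcal{S}} h = f\vert_\mathcal{S} + T_\mathcal{S} h$ and $\avuptrans{f,\mathcal{S}}{} h = f\vert_\mathcal{S} + \avuptrans{\mathcal{S}}{} h$ for every $h\in\setofgambles{}(\mathcal{S})$, so within the restricted setting the map $\avuptrans{f,\mathcal{S}}{}$ plays the role of `$\avuptrans{f\vert_\mathcal{S}}{}$' and $T_{f,\mathcal{S}}$ plays the role of `$(T_\mathcal{S})_{f\vert_\mathcal{S}}$'. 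Since $\mu$ is by hypothesis the unique eigenvalue of $\avuptrans{f,\mathcal{S}}{}$, applying Lemma~\ref{lemma: if T_f has eigenvalue then repetition independence weakly ergodic} to $\settrans{}'$ and $f\vert_\mathcal{S}$ yields, for every $\epsilon>0$, some $T\in\settrans{}$ for which
\begin{equation*}
\mu-\epsilon \leq \liminf_{k\to+\infty} \tfrac{1}{k} T_{f,\mathcal{S}}^k(0) \leq \limsup_{k\to+\infty} \tfrac{1}{k} T_{f,\mathcal{S}}^k(0) \leq \mu.
\end{equation*}

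It then remains to relate $T_{f,\mathcal{S}}^k(0)$ to $[T_f^k(0)]\vert_\mathcal{S}$. Every $T\in\settrans{}$ is itself an upper transition operator (being linear, it trivially satisfies~\ref{transcoherence: upper bound}--\ref{transcoherence: homogeneity}), and $\mathcal{S}$ is closed in $\mathscr{G}(T)$ as noted above, so Lemma~\ref{lemma: G^k is equal to G_R^k} applied with $\uptrans{}$ replaced by $T$ gives $T_{f,\mathcal{S}}^k(0) = [T_f^k(0)]\vert_\mathcal{S}$ for all $k\in\nats{}$. Substituting this identity into the displayed inequality above yields the desired conclusion. I expect the main obstacle to be purely notational: three function spaces ($\setofgambles{}(\statespace{})$, $\setofgambles{}(\mathcal{S})$ and the zero-extensions in between) together with their various restriction-compatible operators must be kept distinct, but no individual step is itself delicate.
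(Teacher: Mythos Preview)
Your proposal is correct and follows essentially the same route as the paper: reduce to the restricted state space $\mathcal{S}$ via Lemma~\ref{lemma: settrans_S is set of transition matrices}, apply Lemma~\ref{lemma: if T_f has eigenvalue then repetition independence weakly ergodic} there, and then pull the conclusion back using Lemma~\ref{lemma: G^k is equal to G_R^k} with $T$ in the role of the upper transition operator (legitimate since $\mathcal{S}$ is closed in $\mathscr{G}(T)$ by Lemma~\ref{lemma: S is maximal in G(uptrans) then S is maximal in G(T)}). The only cosmetic difference is that the paper explicitly names the element $T'\in\settrans{}'$ produced by Lemma~\ref{lemma: if T_f has eigenvalue then repetition independence weakly ergodic} and then chooses a preimage $T\in\settrans{}$ with $T_\mathcal{S}=T'$, whereas you fold this pullback into a single sentence.
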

\begin{proof}
Due to Lemma~\ref{lemma: settrans_S is set of transition matrices} and the fact that $\mathcal{S}$ is closed in $\mathscr{G}(\uptrans{})$, we have that $\settrans'\coloneqq\{T_\mathcal{S}\colon T\in\settrans\}$ is a separately specified set of transition matrices and that $\uptrans{}' \coloneqq \avuptrans{\mathcal{S}}{}$ is the corresponding upper transition operator.
Furthermore, if we let $g\coloneqq f\vert_\mathcal{S}$, then we can write that $\avuptrans{f,\mathcal{S}}{} h = f\vert_\mathcal{S} + (\uptrans{} h^\uparrow)\vert_\mathcal{S} = g + \uptrans{}' h = \avuptrans{g}{\prime} h$ for all $h\in\setofgambles{}(\mathcal{S})$.
Now, because of Lemma~\ref{lemma: T_f,S has an eigenvector} and the fact that $\mathcal{S}$ is a closed communication class in $\mathscr{G}(\uptrans{})$, the map $\avuptrans{f,\mathcal{S}}{}$ has a unique eigenvalue $\mu$, which implies that $\mu$ is also the unique eigenvalue of the map $\avuptrans{g}{\prime}$.
Recalling that $\uptrans{}'$ is the upper transition operator corresponding to the separately specified set $\settrans'$ of transition matrices, we obtain from Lemma~\ref{lemma: if T_f has eigenvalue then repetition independence weakly ergodic} that, for any $\epsilon>0$, there is a $T'\in\settrans'$ such that
\begin{align}\label{Eq: Lemma: if max commun class S then IMC_ri weakly ergodic for states in S}
\mu-\epsilon \leq \liminf_{k\to+\infty}\tfrac{1}{k}[(T'_{g})^k(0)] \leq \mu.
\end{align}
Furthermore, let $T\in\settrans$ be such that $T_\mathcal{S}=T'$; there is at least one such a $T$ because $T'\in\settrans'=\{T_\mathcal{S}\colon T\in\settrans\}$. 
Then, similarly as before, we have that $T_{f,\mathcal{S}} h = f\vert_\mathcal{S} + (T h^\uparrow)\vert_\mathcal{S} = g + T'h = T'_{g} h$ for all $h\in\setofgambles{}(\mathcal{S})$.
In particular, we infer that $[T_{f,\mathcal{S}}^k(0)] = [(T'_{g})^k(0)]$ for all $k\in\nats$.
Moreover, since $\mathcal{S}$ is closed in $\mathscr{G}(\uptrans{})$, the class $\mathcal{S}$ is also closed in $\mathscr{G}(T)$ due to Lemma~\ref{lemma: S is maximal in G(uptrans) then S is maximal in G(T)}, so Lemma~\ref{lemma: G^k is equal to G_R^k}
implies that $[T_{f,\mathcal{S}}^{k}(0)]= [T_f^{k}(0)]\vert_\mathcal{S}$ for all $k\in\nats$.
As a result, we obtain that $[(T'_{g})^k(0)] = [T_f^{k}(0)]\vert_\mathcal{S}$ for all $k\in\nats$, which, by plugging this back into Equation~\eqref{Eq: Lemma: if max commun class S then IMC_ri weakly ergodic for states in S}, implies the desired statement.
\end{proof}

\begin{proofof}{Proposition~\ref{Prop: TCA is sufficient for weak ergodicity of IMC under rep. ind.}.}
Fix any $f\in\setofgambles{}(\statespace{})$, any $\epsilon>0$ and suppose that $\uptrans{}$ satisfies (TCA) with top class $\mathcal{R}$.
Due to Lemma~\ref{Lemma: if max commun class S then IMC_ri weakly ergodic for states in S} and the fact that $\mathcal{R}$ is a closed communication class in $\mathscr{G}(\uptrans{})$, there is a $T\in\settrans$ such that 
\begin{align}\label{Eq: Prop: TCA is sufficient for weak ergodicity of IMC under rep. ind. 1}
\mu-\epsilon \leq \liminf_{k\to+\infty} \tfrac{1}{k} [T_f^{k}(0)](x) \text{ for all } x\in\mathcal{R},
\end{align}
where $\mu$ is the unique eigenvalue of\/ $\avuptrans{f, \, \mathcal{R}}{}$.
Furthermore, note that $\mathcal{R}$ is also an absorbing closed class in $\mathscr{G}(T)$ because of Lemma~\ref{lemma: TCA then all T have absorbing maximal class}, 
so Proposition~\ref{prop: if T is absorbing, then limit of m_f,k does not depend on S^c} implies that
\begin{align*}
\min_{x \in \mathcal{R}} \liminf_{k \to +\infty} \tfrac{1}{k} [T_f^k(0)](x)
\leq \liminf_{k \to +\infty} \tfrac{1}{k} [T_f^k(0)](y) \text{ for all } y\in\statespace{}. 
\end{align*}
Combining this with Equation~\eqref{Eq: Prop: TCA is sufficient for weak ergodicity of IMC under rep. ind. 1}, we infer that 
$\mu - \epsilon \leq \liminf_{k\to+\infty} \tfrac{1}{k} [T_f^{k}(0)](x)$ for all $x\in\statespace{}$, and therefore certainly that $\mu - \epsilon \leq \liminf_{k\to+\infty} \sup_{T\in\settrans} \tfrac{1}{k} [T_f^{k}(0)](x)$ for all $x\in\statespace{}$.
This holds for any $\epsilon>0$, so we conclude that  
\begin{align*}
\mu 
\leq \liminf_{k\to+\infty} \sup_{T\in\settrans} \tfrac{1}{k} [T_f^{k}(0)](x) 
= \liminf_{k \to +\infty} \avriupprev{k}(f\vert x) \text{ for all } x\in\statespace{},
\end{align*}
where the last equality follows from Equation~\eqref{Eq: recursive formula rep. independence}. 
Now recall that $\mu$ is the unique eigenvalue of $\avuptrans{f,\mathcal{R}}{}$ and therefore, that $\lim_{k\to+\infty} \upprev[\mathrm{av},k](f \vert x) = \mu$ for all $x\in\statespace{}$.
Indeed, this can easily be deduced from our earlier results and the fact that $\mathcal{R}$ is an absorbing top class in $\mathscr{G}(\uptrans)$; Lemma~\ref{lemma: if top class then eigenvector in top class} and Equation~\eqref{Eq: recursive expression 2} imply that $\lim_{k\to+\infty} \upprev[\mathrm{av},k](f \vert x) = \mu$ for all $x\in\mathcal{R}$, which then also implies that $\lim_{k\to+\infty} \upprev[\mathrm{av},k](f \vert y) = \mu$ for all $y\in\mathcal{R}^c$ because $\uptrans{}$ is weakly ergodic due to Proposition~\ref{proposition: if top class absorbing then weakly ergodic}. 
Since moreover, for all $x\in\statespace{}$ and all $k\in\nats$, $\avriupprev{k}(f\vert x) \leq \upprev[\mathrm{av},k](f\vert x)$ because $\rimarkov{\settrans} \subseteq \cimarkov{\settrans} \subseteq \eimarkov{\settrans}$, we obtain that 
\begin{align*}
\mu 
\leq \liminf_{k \to +\infty} \, \avriupprev{k}(f\vert x)
\leq \limsup_{k \to +\infty} \, \avriupprev{k}(f\vert x)
\leq \limsup_{k\to+\infty} \, \upprev[\mathrm{av},k](f \vert x)
= \lim_{k\to+\infty} \upprev[\mathrm{av},k](f \vert x) 
= \mu \text{ for all }x\in\statespace{}.
\end{align*}
This implies that $\rimarkov{\settrans{}}$ is weakly ergodic and that $\avriupprev{\infty}(f) = \upprev[\mathrm{av},\infty](f)$.
\end{proofof}

\begin{proofof}{Proposition~\ref{Prop: if no top class then IMC under ri is not weakly ergodic}.}
Suppose that $\mathscr{G}(\uptrans{})$ does not have a top class.
Due to Corollary~\ref{corollary: no top class}, there are (at least) two, disjoint, closed communication classes $\mathcal{S}_1$ and $\mathcal{S}_2$.
Consider any two $c_1, c_2 \in \reals{}$ such that $c_1 \not= c_2$, and let $f \coloneqq c_1 \indica{\mathcal{S}_1} + c_2 \indica{\mathcal{S}_2}$.
Furthermore, fix any $T\in\settrans$ and observe that, because of Lemma~\ref{lemma: S is maximal in G(uptrans) then S is maximal in G(T)}, the classes $\mathcal{S}_1$ and $\mathcal{S}_2$ are also closed (and obviously disjoint) in $\mathscr{G}(T)$.
Since $f \indica{\mathcal{S}_1} = c_1 \indica{\mathcal{S}_1}$, Lemma~\ref{lemma: average does not depend on f outside S} implies that $\tfrac{1}{k}[T_f^{k}(0)]\,\indica{\mathcal{S}_1} = \tfrac{1}{k}[T^k_{c_1}(0)]\, \indica{\mathcal{S}_1}$ for all $k \in \nats{}$.
Moreover, it follows trivially from the definition of the map $T_{c_1}$ and the fact that $T$ is a transition matrix (or, using \ref{transcoherence: bounds}), that $T_{c_1}^k(0) = k c_1$ for all $k\in\nats$, so we have that $\tfrac{1}{k}[T^k_{c_1}(0)] \, \indica{\mathcal{S}_1} = c_1 \indica{\mathcal{S}_1}$ for all $k \in \nats{}$.
Taking our earlier considerations into account, we obtain that $\smash{\tfrac{1}{k}[T_f^{k}(0)]}\,\indica{\mathcal{S}_1} = c_1 \indica{\mathcal{S}_1}$ for all $k \in \nats{}$.
In a completely analogous way, we can deduce that $\tfrac{1}{k}[T_f^{k}(0)]\,\indica{\mathcal{S}_2} = c_2 \indica{\mathcal{S}_2}$ for all $k \in \nats{}$.
Hence, for any $x\in\mathcal{S}_1$ and any $y\in\mathcal{S}_2$, we have that $\tfrac{1}{k}[T_f^{k}(0)](x) = c_1$ and $\smash{\tfrac{1}{k}[T_f^{k}(0)](y)} = c_2$ for all $k \in \nats{}$.
This holds for any $T\in\settrans$, so we infer that $\sup_{T\in\settrans}\tfrac{1}{k}[T_f^{k}(0)](x) = c_1$ and $\sup_{T\in\settrans}\tfrac{1}{k}[T_f^{k}(0)](y) = c_2$ for all $k \in \nats{}$.
By Equation~\eqref{Eq: recursive formula rep. independence}, this implies that $\avriupprev{k}(f \vert x) = c_1$ and $\avriupprev{k}(f \vert y) = c_2$ for all $k \in \nats{}$, and since $c_1 \not= c_2$ by assumption, we conclude that $\avriupprev{k}(f \vert x')$ with $f = c_1 \indica{\mathcal{S}_1} + c_2 \indica{\mathcal{S}_2}$ does not converge to a constant that is equal for all $x' \in \statespace{}$.
\end{proofof}

\begin{proofof}{Proposition~\ref{Prop: if not absorbing then IMC under ri is not weakly ergodic}.}
Suppose that $\mathscr{G}(\uptrans{})$ has a top class $\mathcal{R}$, but that $\uptrans{}$ does not satisfy (TCA).
We show that $\rimarkov{\settrans}$ is not weakly ergodic.

Since $\uptrans{}$ does not satisfy (TCA), there is, according to Lemma~\ref{lemma: not TCA then TI_A equals one}, a non-empty set $A\subseteq\mathcal{R}^c$ such that $\indica{A} \leq \uptrans{} \indica{A}$.
Since $\settrans$ is separately specified, \ref{sep specified} ensures that, for any $\epsilon>0$, there is a $T\in\settrans$ such that 
\begin{align}\label{Eq: Prop: if not absorbing then IMC under ri is not weakly ergodic}
\indica{A}-\epsilon \leq \uptrans{}\indica{A}-\epsilon \leq T\indica{A}.
\end{align}
Then it is easy to see, using an induction argument, that $[T_{\indica{A}}^k(0)] \geq k\indica{A}- \tfrac{k(k-1)}{2}\epsilon$ for all $k\in\nats$.
Indeed, the inequality trivially holds for $k=1$ because $[T_{\,\indica{A}}(0)] = \indica{A}$ due to \ref{transcoherence: bounds}.
To prove the induction step, suppose that $[T_{\indica{A}}^k(0)] \geq k\indica{A}- \tfrac{k(k-1)}{2}\epsilon$ for some $k\in\nats$.
Then, using the linearity, monotonicity and constant additivity of $T$ and the fact that $\tfrac{k(k-1)}{2} = \sum_{\ell=1}^{k-1}\ell$, we find that 
\begin{multline*}
[T_{\indica{A}}^{k+1}(0)]
= \indica{A} + T [T_{\indica{A}}^{k}(0)]
\geq \indica{A} + T [k\indica{A} - \tfrac{k(k-1)}{2}\epsilon]
= \indica{A} + k T\indica{A} - \tfrac{k(k-1)}{2}\epsilon
\geq \indica{A} + k (\indica{A}-\epsilon) - \tfrac{k(k-1)}{2}\epsilon \\
= (k+1)\indica{A} - k\epsilon - \sum_{\ell=1}^{k-1}\ell\epsilon 
= (k+1)\indica{A} - \tfrac{k(k+1)}{2}\epsilon,
\end{multline*}
which establishes the induction step and therefore, the fact that $[T_{\indica{A}}^k(0)] \geq k\indica{A}- \tfrac{k(k-1)}{2}\epsilon$ for all $k\in\nats$.
Hence, for any $x\in A$, we have that $\sup_{T'\in\settrans} \tfrac{1}{k}[(T'_{\indica{A}})^k(0)](x) \geq \tfrac{1}{k}[T_{\indica{A}}^k(0)](x) \geq 1 - \tfrac{(k-1)}{2}\epsilon$ for all $k\in\nats$.
Since this is true for any $\epsilon>0$,
we infer that $\sup_{T\in\settrans} \tfrac{1}{k}[T_{\indica{A}}^k(0)](x) \geq 1$ for all $k\in\nats$ and therefore, by Equation~\eqref{Eq: recursive formula rep. independence}, that 
\begin{align*}
\liminf_{k\to+\infty} \avriupprev{k}(\indica{A}\vert x) \geq 1 \text{ for all } x\in A.
\end{align*}

On the other hand, we have that $\liminf_{k \to +\infty} \avriupprev{k}(\indica{A}\vert y) \leq 0$ for all $y\in\mathcal{R}$.
Indeed, because $A \subseteq \mathcal{R}^c$, we have that $\indica{A} \indica{\mathcal{R}} = 0 = 0 \, \indica{\mathcal{R}}$, and since $\mathcal{R}$ is a closed communication class in $\mathscr{G}(\uptrans{})$, Lemma~\ref{lemma: average does not depend on f outside S} implies that $\tfrac{1}{k}[\avuptrans{\indica{A}}{k}(0)] \indica{\mathcal{R}} = \tfrac{1}{k}[\avuptrans{0}{k}(0)] \indica{\mathcal{R}}$ for all $k \in \nats{}$.
Since $\tfrac{1}{k}[\avuptrans{0}{k}(0)]=0$ due to \ref{transcoherence: bounds}, we therefore have that $\tfrac{1}{k}[\avuptrans{\indica{A}}{k}(0)](y) = 0$ for all $y\in\mathcal{R}$ and all $k\in\nats$.
Moreover, for any $T\in\settrans$, note that $Th \leq \uptrans{} h$ and therefore $T_{\,\indica{A}} h \leq \avuptrans{\indica{A}}{} h$ for all $h\in\setofgambles{}(\statespace{})$, which implies by the monotonicity [\ref{topical: monotonicity}] of $T_{\,\indica{A}}$ that $[T_{\,\indica{A}}^{k}(0)]\leq[\avuptrans{\indica{A}}{k}(0)]$ for all $k\in\nats$.
Together with our previous considerations, we find that $\tfrac{1}{k}[T_{\indica{A}}^{k}(0)](y)\leq\tfrac{1}{k}[\avuptrans{\indica{A}}{k}(0)](y) = 0$ for any $T\in\settrans$, all $y\in\mathcal{R}$ and all $k\in\nats$.
As a result, 
\begin{align*}
\liminf_{k \to +\infty} \avriupprev{k}(\indica{A}\vert y)
\overset{\text{\eqref{Eq: recursive formula rep. independence}}}{=} \liminf_{k \to +\infty} \sup_{T\in\settrans} \tfrac{1}{k}[T_{\indica{A}}^{k}(0)](y) \leq  0 \text{ for all } y\in\mathcal{R}.
\end{align*}
So, we have that $\liminf_{k\to+\infty} \avriupprev{k}(\indica{A}\vert x) \geq 1$ and $\liminf_{k \to +\infty} \avriupprev{k}(\indica{A}\vert y) \leq 0$ for all $x\in A$ and all $y\in\mathcal{R}$, implying that $\rimarkov{\settrans}$ cannot be weakly ergodic.
\end{proofof}

\end{document}